\newtheorem{theorem}{Theorem}[section]
\newtheorem{lemma}[theorem]{Lemma}
\newtheorem{proposition}[theorem]{Proposition}
\newtheorem{proposition-and-definition}[theorem]{Proposition and Definition}
\newtheorem{proposition-and-notation}[theorem]{Proposition and Notation}
\newtheorem{corollary}[theorem]{Corollary}
\theoremstyle{definition}
\newtheorem{definition}[theorem]{Definition}
\newtheorem{notation}[theorem]{Notation}
\newtheorem{remark}[theorem]{Remark}
\newtheorem{notation-and-remark}[theorem]{Notation and Remark}
\newtheorem{definition-and-remark}[theorem]{Definition and Remark}
\newtheorem{remark-and-notation}[theorem]{Remark and Notation}
\newtheorem{remark-and-definition}[theorem]{Remark and Definition}
\newtheorem{example}[theorem]{Example}
\newtheorem{ad-hoc-item}[theorem]{  }
\newcommand{\cA}{\mathcal{A}}
\newcommand{\cB}{\mathcal{B}}
\newcommand{\bC}{ \mathbb{C} }
\newcommand{\cH}{ \mathcal{H} }
\newcommand{\cI}{ \mathcal{I} }
\newcommand{\cJ}{ \mathcal{J} }
\newcommand{\cM}{ \mathcal{M} }
\newcommand{\bN}{ \mathbb{N} }
\newcommand{\cP}{ \mathcal{P} }
\newcommand{\bR}{ \mathbb{R} }
\newcommand{\limn}{ \lim_{n \to \infty} }
\newcommand{\Ker}{ \mathrm{ker} }
\newcommand{\cyc}{\eta}
\newcommand{\symp}{\mathsf{p}}
\newcommand{\llambda}{\mathsf{p}}
\newcommand{\vvarphi}{\chi}
\newcommand{\muw}{ \mu_{\uw} }
\newcommand{\linspan}{\mathrm{span}}
\newcommand{\tr}{ \mathrm{tr} }
\newcommand{\WOT}{ \mathrm{WOT} }
\newcommand{\uee}{\ee }
\newcommand{\ui}{ \underline{i} }
\newcommand{\uj}{ \underline{j} }
\newcommand{\ur}{ \underline{r} }
\newcommand{\uw}{ \underline{w} }
\newcommand{\bt}{ \mathbf{t} }
\newcommand{\bu}{ \mathbf{u} }
\newcommand{\ans}{ (a_n)_{n=1}^{\infty} }
\newcommand{\oneA}{ 1_{{ }_{\cA}} }
\newcommand{\oneM}{ 1_{{ }_{\cM}} }
\newcommand{\diamondU}{\overset{\diamond}{U}}
\newcommand{\ee}{\varepsilon}
\newcommand{\wx}{\omega}
\newcommand{\la}{\langle}
\newcommand{\ra}{\rangle}
\numberwithin{equation}{section}
\title[CLT for star-generators of $S_{\infty}$,
and CCR]{A central limit theorem for star-generators of 
\boldmath{$S_{\infty}$}, which relates to traceless CCR-GUE matrices} 
\author[J. Campbell]{Jacob Campbell}
\address{Jacob Campbell: Department of Pure Mathematics, 
University of Waterloo, Ontario, Canada.}
\email{j48campb@uwaterloo.ca}
\author[C. K\"ostler]{Claus K\"ostler}
\address{Claus K\"ostler: School of Mathematical Sciences,
University College Cork, Ireland.}
\email{claus@ucc.ie}
\author[A. Nica]{Alexandru Nica}
\thanks{AN: research supported by a Discovery Grant from 
NSERC, Canada.}
\address{Alexandru Nica: Department of Pure Mathematics, 
University of Waterloo, Ontario, Canada.}
\email{anica@uwaterloo.ca}
\begin{document}

\begin{abstract}
$\ $ $\ $ We prove a limit theorem concerning the sequence of star-generators 

\noindent
of $S_{\infty}$, where the expectation 
functional is provided by a character of 
$S_{\infty}$ with weights $(w_1, \ldots , w_d, 0,0, \ldots )$
in the Thoma classification. The limit law turns out to be
the law of a ``traceless CCR-GUE'' matrix, an analogue
of the traceless GUE where the off-diagonal entries 
$g_{i,j}$ satisfy the commutation relation
$g_{i,j}g_{j,i} = g_{j,i}g_{i,j} + (w_j - w_i)$.  The 
special case $w_1 = \cdots = w_d = 1/d$ yields the law of
a bona fide traceless GUE matrix, and we retrieve a result 
of K\"ostler and Nica from 2021,
which in turn extended a result of Biane from 1995.
\end{abstract}

\maketitle
\section{Introduction}

\noindent
The sequence of star-generators 
$(1,2), (1,3), \ldots , (1,n), \ldots$ of the infinite symmetric 
group $S_{\infty}$ has received a good amount of attention in recent 
years.  This was due to nice combinatorial facts found about   
factorizations of arbitrary permutations into products of star-generators,
and also because of the direct connection that star-generators have with
the important family of Jucys-Murphy elements of $\bC [S_{\infty} ]$.
See for instance the presentation of results shown in the introduction 
to \cite{Fe2012}.

An interesting point of view, going back to a paper of 
Biane \cite{Bi1995}, is to treat the star-generators as a sequence
of selfadjoint random variables in the $*$-probability space 
$( \bC [ S_{\infty} ] , \varphi )$, where $\varphi$ is the
canonical trace on the group algebra $\bC [ S_{\infty} ]$.  In
particular, one of the results in \cite{Bi1995} is a limit theorem 
with CLT flavour which holds in this framework, and finds the 
semicircle law of Wigner to be the limit law.

The canonical trace $\varphi$ on $\bC [S_{\infty}]$ is the 
$d \to \infty$ limit of a significant sequence of trace-states 
$\varphi_d : \bC[ S_{\infty} ] \to \bC$, $d \in \bN$, given
by the so-called ``block characters'' of $S_{\infty}$ (see 
e.g.~the presentation in \cite{GnGoKe2013}).  
In the recent paper \cite{KoNi2021} it was observed that the 
CLT theorem for the star-generators of $S_{\infty}$ still holds 
in the $*$-probability space $( \bC [S_{\infty} ], \varphi_d )$ 
for a finite value of $d \in \bN$.  Moreover, the limit law for 
the CLT can in this case be identified as the law of a known 
$d \times d$ random Hermitian matrix, the ``traceless GUE matrix''.

Moving one step further, we note that the block character which
defines $\varphi_d$ is a special case of extremal character
of $S_{\infty}$.  In the classification of such extremal characters 
that was given by Thoma \cite{Tho1964}, this block character is 
encoded by the sequence 
\begin{equation}   \label{eqn:1a}
(1/d, \ldots , 1/d, 0, \ldots , 0, \ldots ), 
\ \ \mbox{ with $d$ occurrences of $1/d$ at the beginning.}
\end{equation}
(The Thoma classification actually encodes a character by using 
two sequences of numbers in $[0, \infty )$; but in this paper 
we are only dealing with situations where the second Thoma sequence 
has all its terms equal to $0$.)

The present paper continues the study of the CLT theorem 
for star-generators, in the setting where instead of the 
sequence (\ref{eqn:1a}), we consider a sequence 
\begin{equation}   \label{eqn:1b}
\left\{ 
\begin{array}{c}
(w_1, \ldots , w_d, 0, \ldots , 0, \ldots ), 
\mbox{ with $d \geq 2$ and with} \\
\mbox{$w_1 \geq w_2 \geq \cdots \geq w_d > 0$ 
such that $\sum_{i=1}^d w_d = 1$.}
\end{array}  \right.
\end{equation}
As explained below, the CLT result still holds in 
connection to a tuple of weights as in (\ref{eqn:1b}), 
and continues to belong to the general category of 
``exchangeable CLT'' theorems.  This is stated as 
Theorem \ref{thm:25} of the paper, which we prove 
by reduction to a basic exchangeable CLT result of 
Bo\.zejko and Speicher \cite{BoSp1996}. 
Moreover, the limit law $\muw$ arising in 
Theorem \ref{thm:25} still has a neat realization 
as the law of an appropriate version of $d \times d$ 
traceless GUE matrix.  But the extension from the 
framework of (\ref{eqn:1a}) to the one of (\ref{eqn:1b}) 
is non-trivial in the following two respects.

\vspace{6pt}

\noindent
(a) When looking at the law of large numbers that typically
precedes a CLT, one finds the case 
$w_1 = w_2 = \cdots = w_d = 1/d$ to be the only one where the
centering of the star-generators is done in the usual way, by
subtracting a scalar multiple of the unit of $\bC [S_{\infty}]$. 
For any other tuple $(w_1, \ldots , w_d)$, the centering has
to be done by subtracting a (non-scalar) operator $A_0$ in 
the GNS representation of the character; this naturally moves
the framework of the CLT to the $*$-probability $( \cM , \tr )$,
where $\cM$ is the von Neumann algebra generated 
by the said GNS representation, and $\tr$ is the natural trace
that $\cM$ comes equipped with.  We explain how the centering goes
in Section 2.2, and then discuss this in more detail in Section 3 
of the paper.

\vspace{6pt}

\noindent 
(b) The CLT limit law $\mu_{\uw}$ corresponding to a general 
tuple $\uw = (w_1, \ldots , w_d)$ of weights as in 
(\ref{eqn:1b}) still is realized as the law of a special
$d \times d$ matrix $M$; but the entries of $M$ are now 
living in the non-commutative world of canonical commutation
relations (CCR).  More precisely: for $1 \leq i,j \leq d$ with
$w_i \neq w_j$, the $(i,j)$-entry $a_{i,j}$ of $M$ 
and its adjoint $a_{j,i} = a_{i,j}^{*}$ are set to 
satisfy the relation
$a_{i,j} a_{j,i} = a_{j,i} a_{i,j} + (w_j - w_i)$.  
This forces $a_{i,j}$ to no longer be a
random variable in the usual sense.  But nevertheless, 
the description of $a_{i,j}$ clearly fits within a notion of
``CCR-complex-Gaussian random variable'', and the resulting 
matrix $M$ is a natural CCR-analogue for a traceless GUE 
matrix.  These points are explained in Section 2.3 below, 
where we also state the main result of the paper, 
Theorem \ref{thm:29}.

$\ $

\noindent
{\bf Organisation of the paper.}
Besides the present Introduction, the paper
has 7 sections.  

\vspace{4pt}

\noindent
-- Section 2 gives a general presentation of the framework 
and results of the paper.  The CLT result is stated as
Theorem \ref{thm:25} in Section 2.2, then in Section 2.3 
we introduce the notion of traceless CCR-GUE matrix and we 
give the statement of Theorem \ref{thm:29}.  Section 2.4
gives a glimpse of the underlying combinatorics which 
connects Theorems \ref{thm:25} and \ref{thm:29}: the proofs
of both these theorems require fiddling with 
pair-partitions $\pi$ of sets $\{ 1, \ldots , k \}$, only 
that this fiddling is done in two different ways, which 
associate two different permutations, 
``$\tau_{\pi}$ vs. $\sigma_{\pi}$'',
to the same pair-partition $\pi$.  
Proposition \ref{prop:211} of Section 2.4 explains how
the cycle structure of $\tau_{\pi}$ can be read from 
$\sigma_{\pi}$ -- this is a key-point towards 
the proof of Theorem \ref{thm:29}.

\vspace{4pt}

\noindent
-- In Section 3 we set the framework used throughout the 
paper, and we review the law of large numbers which introduces
the operator $A_0$ used for centering in our CLT result.  

\vspace{4pt}

\noindent
-- In Section 4 we review the setting for
an exchangeable CLT, and we prove Theorem \ref{thm:25}.

\vspace{4pt}

\noindent
-- Sections 5-7 are devoted to studying the moments of the 
limit law $\muw$ which arises in Theorem \ref{thm:25}.  
More precisely: in Section 5 we clarify 
(cf.~Proposition \ref{prop:56}) how the said moments 
are expressed in terms of the permutations $\tau_{\pi}$.
In Section 6 we present in more detail the connection 
$\tau_{\pi} \, \leftrightarrow \, \sigma_{\pi}$
advertised in the presentation of results from Section 2.4. 
This is then used in Section 7 
in order to obtain a formula where the moments of $\muw$
are written directly in terms of permutations 
$\sigma_{\pi}$ (cf.~Proposition \ref{prop:76}).

\vspace{4pt}

\noindent
-- Finally, in Section 8 we discuss in more detail the
notion of traceless CCR-GUE matrix.  We obtain 
(cf.~Proposition \ref{prop:84}) a Wick-style formula for 
the joint moments of the entries of such a matrix, and 
this Wick-style formula is then used to prove 
the main result of the paper, Theorem \ref{thm:29}.

$\ $

\section{Presentation of results}

\subsection{Description of framework, the law of large numbers.}

\begin{notation}   \label{def:21}
{\em (Some general notation.)}
As is customary, we denote by $S_{\infty}$ the group of all finite
permutations $\sigma$ of $\bN = \{ 1,2, \ldots , n , \ldots \}$ 
(thus $\sigma : \bN \to \bN$ is bijective, and there exists $n_o \in \bN$
such that $\sigma (n) = n$ for $n>n_o$).
We will write permutations $\sigma \in S_{\infty}$ in cycle 
notation, where $\sigma$ is expressed as a product of disjoint cycles, 
and every $n \in \bN$ not specifically included in a cycle is assumed 
to be a fixed point of $\sigma$.
Important instances of such writing are provided by the sequence of 
{\em star-transpositions} $(\gamma_n )_{n=1}^{\infty}$
defined as
\begin{equation}   \label{eqn:21a}
\gamma_1 = (1,2), \, \gamma_2 = (1,3), \ldots , 
\gamma_n = (1,n+1), \ldots ,
\end{equation}
and by the {\em forward cycles} $(\cyc_n)_{n=1}^{\infty}$, which are 
\begin{equation}   \label{eqn:21b}
\cyc_1 = (1) \, ( = \mbox{unit of $S_{\infty}$)}, 
\, \cyc_2 = (1,2), \, \cyc_3 = (1,2,3), \ldots , 
\cyc_n = (1,2, \ldots , n), \ldots
\end{equation}
\end{notation}

\vspace{6pt}

\begin{notation}   \label{def:22}
{\em (The character $\chi$.)}
Throughout the whole paper we fix a $d \geq 2$ and a tuple 
of weights,
\begin{equation}    \label{eqn:22a}
\uw = (w_1, \ldots , w_d), 
\end{equation} 
with $w_1 \geq w_2 \geq \cdots \geq w_d >0 \mbox{ and }  
w_1 + \cdots + w_d =1$.
We consider the power sums
\begin{equation}    \label{eqn:22b}
\symp_n := w_1^n + w_2^n + \cdots + w_d^n, \ \ n \in \bN,
\end{equation}
thus getting a sequence of numbers
$1 = \symp_1 > \symp_2 > \cdots > \symp_n > \cdots > 0$.
With this in hand, we then define 
$\vvarphi : S_{\infty} \to \bR$ by putting
\begin{equation}    \label{eqn:22c}
\vvarphi ( \sigma ) := \prod_{ 
     \substack{V \, \mathrm{orbit \ of} \ \sigma, \\
               |V| \geq 2} } \ \symp_{|V|}.
\end{equation}
(Concrete example: $\sigma = (1,3,2)(5,6) \in S_{\infty}$
has $\vvarphi ( \sigma ) = \symp_2 \cdot \symp_3$.  In the
case when $\sigma = \cyc_1$ = unit of $S_{\infty}$, the empty
product on the right hand side of (\ref{eqn:22c}) is taken
to be equal to $1$.)

It is immediate that $\vvarphi$ is a class-function, that is,
the value $\vvarphi ( \sigma )$ only depends on the conjugacy 
class of $\sigma$ in $S_{\infty}$.  It is, moreover, not hard
to prove that $\vvarphi$ is a function of positive type, hence 
it is what one calls a {\em character} of the group $S_{\infty}$.
Some more advanced considerations (see 
e.g.~\cite[Section 4.2]{BoOl2017}) show that $\vvarphi$ is 
actually an {\em extremal character} of $S_{\infty}$, appearing
in the Thoma classification of such characters in the way indicated 
in (\ref{eqn:1b}) above.
\end{notation}

\vspace{6pt}

\begin{notation}    \label{def:23} 
{\em (GNS.)}  We will use the notation
$U : S_{\infty} \to B( \cH )$ for the GNS representation of
$\vvarphi$.  Thus $\cH$ is a Hilbert space over $\bC$,
endowed with a map
\begin{equation}    \label{eqn:23a}
S_{\infty} \to \cH : \sigma \mapsto \widehat{\sigma},
\end{equation}
such that
\begin{tabular}[t]{ll}
(1) & the linear span of the image of the map 
      (\ref{eqn:23a}) is dense in $\cH$;  \\
(2) & for every $\sigma,\tau \in S_{\infty}$ we have 
$\langle \widehat{\sigma}, \widehat{\tau} \rangle 
= \vvarphi ( \sigma \tau^{-1} )$.
\end{tabular}

\noindent
(Since $\vvarphi$ is a class-function, the 
$\sigma \tau^{-1}$ in (2) can be replaced by any of 
$\tau  \sigma^{-1}$, $\sigma^{-1} \tau$, or $\tau^{-1} \sigma$.)
Then, for every $\sigma \in S_{\infty}$, the operator 
$U( \sigma ) \in B( \cH )$ is defined via the requirement that
\begin{equation}   \label{eqn:23b}
\bigl[ U( \sigma ) \bigr] \, ( \widehat{\tau} ) 
= \widehat{ \sigma \tau }, \ \ \forall
\, \tau \in S_{\infty}.
\end{equation}
It is easily verified that the definition of $U ( \sigma )$ makes
sense and that the map $\sigma \mapsto U( \sigma )$ is a 
representation of $S_{\infty}$ by unitary operators on $\cH$.
Note that for a star-transposition $\gamma_n$, the operator 
$U( \gamma_n )$ is a symmetry, so in particular it is self-adjoint.
\end{notation}

\vspace{6pt}

\begin{remark}    \label{rem:24}
{\em (Law of large numbers.)}  In our setting, this amounts
to the fact that the averages of $U( \gamma_n )$'s are 
convergent in the strong operator topology.  We denote
\begin{equation}   \label{eqn:24a}
\mathrm{SOT}-\lim_{n \to \infty}
\frac{1}{n} \Bigl( U( \gamma_1 ) + \cdots + U( \gamma_n ) \Bigr)
=: A_0 \in B( \cH ).
\end{equation}
The operator $A_0$ was studied in detail in \cite{GoKo2010}, 
and played a crucial role in the considerations of non-commutative 
dynamics made in that paper.  For the sake of keeping the 
presentation self-contained, we include in Section 3 below the
relatively easy proofs of the facts about $A_0$ needed here,
in particular the convergence in (\ref{eqn:24a}) and the relevant
fact that 
\begin{equation}   \label{eqn:24b}
\mathrm{Spectrum}(A_0) 
= \{ t \in (0,1) \mid \, \exists \, 1 \leq i \leq d 
\mbox{ such that } w_i = t \}.
\end{equation}
From (\ref{eqn:24b}) it follows that $A_0$ is an invertible 
selfadjoint operator -- but note that the case 
$w_1 = \cdots = w_d =1/d$ is the only one when $A_0$ comes out
as a scalar multiple of the identity operator on $\cH$.
\end{remark}

$\ $

\subsection{The theorem of CLT type, and the limit law \boldmath{$\muw$}.}

$\ $

\noindent
The next step to be taken, once the limit $A_0$ from
(\ref{eqn:24a}) was identified, is to subtract $A_0$ out of 
the $U( \gamma_n )$'s and seek a limit in law for the rescaled
averages
\begin{equation}   \label{eqn:25a}
\frac{1}{\sqrt{n}} \, \sum_{i=1}^n (U(\gamma_i) - A_0) \text{.} 
\end{equation}
The law of the operators (\ref{eqn:25a}) is to be considered 
with respect to the vector-state defined by the vector 
$\widehat{\cyc_1} \in \cH$, where recall that (coming from 
(\ref{eqn:21b})) we use the notation ``$\cyc_1$'' for the unit
of $S_{\infty}$.  Equivalently, the law of these operators is to 
be considered in the $W^{*}$-probability space $( \cM , \tr )$,
where $\tr$ is the natural trace-state of the von Neumann algebra 
$\cM \subseteq B( \cH )$ generated by the $U( \gamma_n )$'s --
cf.~discussion in Section 3.1 below.

It turns out that the following holds.

\vspace{6pt}

\begin{theorem}    \label{thm:25}
For every $n \in \bN$, let $\mu_n$ denote the law of the 
rescaled average (\ref{eqn:25a}), in the sense described above. 
Then, when $n \to \infty$, the probability measures $\mu_n$ 
have a $\mathrm{weak}^{*}$-limit $\muw$, which depends on
the tuple of weights $\uw = (w_1, \ldots , w_d)$ fixed in 
Notation \ref{def:22}.
\end{theorem}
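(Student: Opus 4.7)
The plan is to deduce Theorem \ref{thm:25} from the exchangeable CLT of Bo\.zejko-Speicher \cite{BoSp1996}, applied to the centered self-adjoint sequence $X_n := U(\gamma_n) - A_0 \in \cM$, $n \geq 1$. Each $U(\gamma_n)$ is a symmetry and $A_0$ is bounded and self-adjoint, so the $X_n$'s are bounded self-adjoint operators in $(\cM, \tr)$, satisfying the uniform norm bound $\|X_n\| \leq 2$.

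The first step is to establish exchangeability of $(X_n)_{n \geq 1}$ in $(\cM, \tr)$. Given distinct indices $i_1, \ldots, i_k \in \bN$ and a permutation $\alpha$ of $\{i_1, \ldots, i_k\}$, one chooses $\beta \in S_{\infty}$ that fixes $1$ and satisfies $\beta(i_j + 1) = \alpha(i_j) + 1$ for each $j$; then $\beta \gamma_{i_j} \beta^{-1} = \gamma_{\alpha(i_j)}$. Conjugation by the unitary $U(\beta)$ therefore sends each $U(\gamma_{i_j})$ to $U(\gamma_{\alpha(i_j)})$, and leaves joint $\tr$-moments invariant by traciality (equivalently, because $\vvarphi$ is a class-function). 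To promote this to the $X_n$'s, one checks that $U(\beta) A_0 U(\beta)^* = A_0$: writing $A_0$ as the $\SOT$-limit of the Ces\`aro averages $\frac{1}{n}\sum_{i=1}^{n} U(\gamma_i)$, conjugation by $U(\beta)$ merely re-indexes finitely many summands, which does not affect the limit.

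Next I would verify the centering condition $\tr(X_n) = 0$. Using $\tr(U(\sigma)) = \vvarphi(\sigma)$ for $\sigma \in S_{\infty}$, one obtains $\tr(U(\gamma_n)) = \vvarphi(\gamma_n) = \symp_2$, since $\gamma_n$ has a unique nontrivial orbit, of size $2$. Similarly $\tr(A_0) = \lim_{n \to \infty} \frac{1}{n}\sum_{i=1}^{n} \vvarphi(\gamma_i) = \symp_2$, so $\tr(X_n) = 0$. Together with the uniform norm bound, this places us within the hypotheses of the exchangeable CLT of \cite{BoSp1996}, whose conclusion furnishes a weak$^*$-limit for the distributions of $\frac{1}{\sqrt{n}}(X_1 + \cdots + X_n)$, namely the desired measure $\muw$.

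The main obstacle I anticipate is the bookkeeping needed to show that $A_0$, which is non-scalar in general, is fixed by every unitary $U(\beta)$ implementing the exchangeability action; this relies crucially on the description of $A_0$ as a Ces\`aro $\SOT$-limit and would have to be made precise. Once that is in place, the application of \cite{BoSp1996} is routine. The identification of $\muw$ (its moments, and its eventual realization as the law of a traceless CCR-GUE matrix) is a separate task, addressed in the later sections of the paper.
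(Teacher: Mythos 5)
Your overall strategy -- reduce to the exchangeable CLT of Bo\.zejko--Speicher applied to $X_n = U(\gamma_n) - A_0$ -- matches the paper's, and your verification of exchangeability via conjugation by $U(\beta)$ (together with the observation that $A_0$ is fixed under these conjugations, since re-indexing finitely many summands does not change the Ces\`aro SOT-limit) is a clean and correct alternative to the formula-based argument in Proposition~\ref{prop:47}. The centering computation $\tr(X_n) = \llambda_2 - \llambda_2 = 0$ is also correct.

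However, there is a genuine gap: centering and exchangeability are \emph{not} sufficient hypotheses for the Bo\.zejko--Speicher theorem. As recalled in Theorem~\ref{thm:44}, that theorem also requires the \emph{singleton vanishing property} (Definition~\ref{def:43}.2): the associated function on partitions $\bt$ must vanish on every $\pi \in \cP(k)$ containing at least one singleton block, for every $k$ -- not merely on the unique partition in $\cP(1)$. Centering is only the $k=1$ special case. Your proposal silently treats centering and singleton vanishing as equivalent, but they are not, and the singleton vanishing property is exactly the point where the choice of centering operator matters. Indeed, the paper emphasizes (Notation~\ref{def:45}) that if one instead centers by the scalar $\llambda_2 \oneM$, the resulting elements $\overset{\circ}{U}_n$ are still centered and exchangeable, yet the singleton vanishing property \emph{fails} and no CLT holds. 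So the entire reason $A_0$ enters the picture is to make singleton vanishing work, and that verification (Propositions~\ref{prop:47}.3 and~\ref{prop:48}, via the formula $\bt(\pi) = \sum_{S} (-1)^{|S|}\bu(\pi \wedge \pi_S)$ and a sign-cancelling involution $\Phi$) is the crux of the argument, not a routine detail. There is also a secondary, more standard gap: the Bo\.zejko--Speicher theorem gives convergence in moments, while Theorem~\ref{thm:25} asserts weak$^*$ convergence of measures; one must supply moment bounds showing that the limit law is determined by its moments (this is handled in Remark~\ref{rem:410} via the estimate $|\mu(X^k)| \leq 2^k (k-1)!!$ for even $k$).
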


\vspace{6pt}

Theorem \ref{thm:25} falls under the general umbrella of 
``exchangeable CLT'' results.  Indeed, it is easily seen that 
upon putting
\begin{equation}   \label{eqn:25b}
\diamondU_n :=
U( \gamma_n ) - A_0 \in \cM, \ \ n \in \bN,
\end{equation}
we get a sequence $(\,  \diamondU_n )_{n=1}^{\infty}$ of elements of 
$\cM$ which are centred and exchangeable with respect to the trace 
$\tr$.  In Section 4 of the paper we provide an elementary 
combinatorial proof that the operators $\diamondU_n$ also satisfy
the ``vanishing singleton property'' used as hypothesis in a basic 
exchangeable CLT result of Bo\.zejko and Speicher, Theorem 0 in 
\cite{BoSp1996}.  This makes Theorem \ref{thm:25} come out as 
a corollary of the said theorem from \cite{BoSp1996}.  An
alternative, more high-powered approach is to obtain
Theorem \ref{thm:25} by reducing it to
\cite[Theorem 9.4]{Ko2010}, a result which applies to sequences of 
non-commutative random variables displaying weaker probabilistic 
symmetries than what we have here.  

We emphasize the fact that in order for the CLT mechanism to 
kick in, it is important that the centering of the 
$U( \gamma_n )$'s goes with
$\diamondU_n := U( \gamma_n ) - A_0$.  This differs from 
the usual centering procedure, which is done by subtracting 
a scalar multiple of the unit, and is commonly denoted by 
putting a circle on the top of the element which is being 
centered.  In the case at hand we would write
$\overset{\circ}{U}_n := U( \gamma_n ) - \lambda \oneM$, with
$\lambda = \tr \bigl( \, U( \gamma_n ) \, \bigr) = \symp_2$.
But unless we are in the special case with 
$w_1 = \cdots = w_d = 1/d$, using the operators 
$\overset{\circ}{U}_n$ does not lead to a theorem of CLT type.

$\ $

\subsection{Realization of \boldmath{$\muw$} as the law 
of a traceless CCR-GUE matrix.}

$\ $

\noindent
The GUE model is one of the most referenced models of random 
Hermitian matrix -- see e.g.~\cite[Chapter 3]{AnGuZe2009}.  For
the purposes of this paper, it is convenient to consider a 
GUE matrix $G = [g_{i,j}]_{i,j=1}^d$ with entries rescaled such
that the expected normalized trace of $G^2$ is 
$E ( \tr_d (G^2) ) = 1$.  The so-called ``traceless GUE'' is
less referenced in the literature, but is for instance 
mentioned in Biane's paper \cite{Bi1995} cited above.  It is
the random matrix $M$ obtained from the above $G$ by projecting 
the random vector $(g_{1,1}, \ldots , g_{d,d}) \in \bR^d$ onto 
the hyperplane of equation $t_1 + \cdots + t_d = 0$. Thus
\begin{equation}   \label{eqn:23x}
M := G - \frac{g_{1,1} + \cdots + g_{d,d}}{d} \, I_d,
\end{equation}
where $I_d$ is the identity $d \times d$ matrix.  It follows, 
in particular, that the diagonal entries of $M$ are linearly 
dependent.  They form a Gaussian family of centred random 
variables with covariance matrix 
$C = [c_{i,j}]_{i,j=1}^d$, where: 

\begin{equation}   \label{eqn:23xx}
c_{i,i} = (d-1)/d^2 \mbox{ for $1 \leq i \leq d$ and }
c_{i,j} = c_{j,i} = - 1/d^2 \mbox{ for $1 \leq i<j \leq d$.}
\end{equation}
The result obtained in \cite[Theorem 1.1]{KoNi2021} can be 
phrased like this: if the tuple of weights 
$\uw = (w_1, \ldots , w_d)$ from Notation \ref{def:22} happens 
to have $w_1 = \cdots = w_d = 1/d$, then the limit law $\muw$
from Theorem \ref{thm:25} is precisely the law (a.k.a.~the average
empirical eigenvalue distribution) of the traceless GUE matrix 
$M$ from Equation (\ref{eqn:23x}).  The main point of the present 
paper is to indicate a nice phenomenon which appears when some of 
the weights $w_1, \ldots , w_d$ are allowed to be different from 
each other: 

\begin{equation}   \label{eqn:23y}
\begin{array}{lcl}
\vline  &  
\mbox{The limit law $\mu_{\uw}$ still is the law of a 
matrix $M$ like in (\ref{eqn:23x}).} &  \vline          \\
\vline  &  
\mbox{Only that now an off-diagonal entry $g_{i,j}$ and 
its adjoint $g_{j,i} = g_{i,j}^{*}$}  &  \vline         \\
\vline &
\mbox{are set to satisfy the relation
$g_{i,j} g_{j,i} = g_{j,i} g_{i,j} + (w_j - w_i)$.}  & \vline
\end{array}   
\end{equation}

Of course, if $i,j \in \{ 1, \ldots ,d \}$ are such that 
$w_i \neq w_j$, then the commutation relation stated in 
(\ref{eqn:23y}) forces $g_{i,j}$ to no longer be a random
variable in the usual sense.  In order to substantiate the 
statement made in (\ref{eqn:23y}), we thus introduce the 
following notion.

\vspace{6pt}

\begin{definition}   \label{def:26}
{\em (CCR-analogue of a complex Gaussian random variable.)}

\noindent
Let $( \cA , \varphi )$ be a $*$-probability space and let 
$\wx_{(1,*)}, \wx_{(*,1)}$ be two parameters in $(0, \infty )$.
We will say that an element $a \in \cA$ is
a {\em centred CCR-complex-Gaussian element} with parameters 
$\wx_{(1,*)}$ and $\wx_{(*,1)}$ when we have the commutation
relation
\begin{equation}   \label{eqn:26a}
a^{*} a = a a^{*} + ( \wx_{(*,1)} - \wx_{(1,*)}) \oneA,
\end{equation}
and we have the expectation formula
\begin{equation}   \label{eqn:26b}
\varphi \bigl( a^p \, (a^{*})^q \, \bigr) = 
\left\{  \begin{array}{ll}
0, & \mbox{ if $p \neq q$;}  \\
   &                            \\
p! \ \wx_{(1,*)}^p, & \mbox{ if $p = q$.}
\end{array}   \right\} , \mbox{ for $p,q \in \bN \cup \{ 0 \}$.}
\end{equation}
\end{definition}

\vspace{6pt}

\begin{remark}    \label{rem:27}
The term ``Gaussian'' used in Definition \ref{def:26} is 
justified by looking at the special case when
$\wx_{(1,*)} = \wx_{(*,1)} =: \wx$.  In such a case, 
Equation (\ref{eqn:26a}) says that $a$ 
commutes with $a^{*}$ (hence can be treated like a complex
random variable from classical probability), while
(\ref{eqn:26b}) becomes the formula giving the 
joint moments of $f$ and $\overline{f}$ for a centred 
complex Gaussian variable $f$ of variance $\omega$.

We note that the commutation in (\ref{eqn:26a}) together 
with the prescription from (\ref{eqn:26b}) determine all 
the joint moments of $a$ and $a^{*}$.  In particular one gets, 
symmetrically to the second branch of (\ref{eqn:26b}), that 
$\varphi \bigl( \, (a^{*})^p \, a^p  \bigr) = p! \ \wx_{(*,1)}^p$
for every $p \in \bN$.  This is a special case of a ``Wick-style''
formula for computing joint moments of $a$ and $a^{*}$ which is
given in Proposition \ref{prop:83} in the body of the paper.

We next introduce a CCR-analogue for the notion of traceless GUE
matrix.  Concerning the diagonal elements $a_{1,1}, \ldots , a_{d,d}$
appearing in the next definition, observe that the covariance matrix
$C$ given in (\ref{eqn:28b}) below is a generalization of
(\ref{eqn:23xx}) from the description of a traceless GUE.  The fact 
that $a_{1,1}, \ldots , a_{d,d}$ form a Gaussian family can be 
understood in the sense that their joint moments are given by a 
rather standard Wick formula (cf.~review in Definition \ref{def:82}.2 
below).
\end{remark}

\vspace{6pt}

\begin{definition}    \label{def:28}
{\em (CCR analogue for a traceless GUE matrix.)}
Let $( \cA , \varphi )$ be a $*$-probability space.  
Suppose we have a family of 
unital $*$-subalgebras of $\cA$, denoted as
\[
\{ \cA_o \} \cup \{ \cA_{i,j} \mid 1 \leq i < j \leq d \},
\]
which are commuting independent (a standard notion -- 
cf.~review in Definition \ref{def:82}.1 below). Suppose 
moreover that we have some elements of $\cA$, as follows.

\vspace{6pt}

\noindent
(i) For every $1 \leq i < j \leq d$,  we have a centred 
CCR-complex-Gaussian element $a_{i,j} \in \cA_{i,j}$ with parameters 
$w_j$ and $w_i$ (meaning that in Definition \ref{def:26} we take
$\wx_{(1,*)} = w_j$ and $\wx_{(*,1)} = w_i$).
We put $a_{j,i} := a_{i,j}^{*} \in \cA_{i,j}$, thus getting the 
relation $a_{j,i} a_{i,j} = a_{i,j} a_{j,i} + (w_i - w_j) \oneA$.

\vspace{6pt}

\noindent
(ii) $\cA_o$ is commutative and we have
selfadjoint elements $a_{1,1}, \ldots , a_{d,d} \in \cA_o$
which form a centred Gaussian family with covariance matrix
$C = [c_{i,j}]_{i,j=1}^d$, where:
\begin{equation}   \label{eqn:28b}
c_{i,i} = w_i - w_i^2
\mbox{ for $1 \leq i \leq d$, and }
c_{i,j} = c_{j,i} = - w_i w_j
\mbox{ for $1 \leq i < j \leq d$.}
\end{equation}

\vspace{6pt}

\noindent
Then the selfadjoint matrix 
$M = [ a_{i,j} ]_{1 \leq i,j \leq d}$ in 
$M_d ( \cA )$ is said to be a {\em traceless CCR-GUE
matrix with parameters $w_1, \ldots , w_d$}.
\end{definition}

\vspace{6pt}

The main result of the present paper is then 
stated as follows.

\vspace{6pt}

\begin{theorem}    \label{thm:29}
Let $( \cA , \varphi )$ be a $*$-probability space, and let 
$M = [ a_{i,j} ]_{i,j=1}^d \in M_d ( \cA )$ be a traceless 
CCR-GUE matrix with parameters $w_1, \ldots , w_d$, as in 
the preceding definition.  Consider the linear functional
$\varphi_{\uw} : M_d ( \cA ) \to \bC$ defined by
\begin{equation}   \label{eqn:29a}
\varphi_{\uw} ( X ) = \sum_{i=1}^d  w_i 
\, \varphi ( x_{i,i} ), \ \mbox{ for }
X = [ x_{i,j} ]_{i,j=1}^d \in M_d ( \cA ).
\end{equation}
Then the law of $M$ in the $*$-probability space 
$( M_d ( \cA ), \varphi_{\uw} )$ is equal to the limit law
$\mu_{\uw}$ from the theorem of CLT type discussed in
Section 2.2.
\end{theorem}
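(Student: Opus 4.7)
The strategy is to compare moments. Since $\muw$ is the spectral distribution of a bounded self-adjoint operator in the tracial $W^*$-probability space $(\cM, \tr)$ (cf.~Theorem~\ref{thm:25}), it has compact support and is determined by its moment sequence. Hence it suffices to verify that
\begin{equation*}
\varphi_{\uw}(M^k) \;=\; \int t^k \, d\muw (t)
\end{equation*}
for every $k \in \bN$; only even $k$ need be considered, as both sides vanish by centredness when $k$ is odd.

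First I would expand
\begin{equation*}
\varphi_{\uw}(M^k) \;=\; \sum_{i_1, \ldots , i_k = 1}^d w_{i_1} \, \varphi \bigl( a_{i_1,i_2} \, a_{i_2,i_3} \cdots a_{i_k,i_1} \bigr),
\end{equation*}
with indices read cyclically ($i_{k+1} := i_1$), and then apply the Wick-style formula of Proposition~\ref{prop:84} to each joint moment on the right. That formula expresses $\varphi ( a_{i_1,i_2} \cdots a_{i_k,i_1} )$ as a sum over pair-partitions $\pi$ of $\{1, \ldots , k\}$, the contribution of each $\pi$ being a weight-product that encodes the pairings of diagonal and off-diagonal entries of $M$, the covariances from~(\ref{eqn:28b}), and the CCR-commutators from Definition~\ref{def:26}.

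The next step is to interchange the summations and, for each fixed pair-partition $\pi$, carry out the inner sum over tuples $(i_1, \ldots , i_k)$ whose cyclic structure is compatible with $\pi$. Here the combinatorics of Section~2.4 enters: a compatible tuple is one which is constant on the orbits of $\sigma_{\pi}$, so the inner sum factorizes as a product over those orbits. After absorbing the prefactor $w_{i_1}$ into the orbit containing position~$1$, I expect each orbit of size $m$ to contribute a factor of $\symp_m = w_1^m + \cdots + w_d^m$, yielding an expression of the shape
\begin{equation*}
\varphi_{\uw}(M^k) \;=\; \sum_{\pi} \; \prod_{\substack{V \, \mathrm{orbit \ of} \, \sigma_{\pi}, \\ |V| \geq 2}} \symp_{|V|}.
\end{equation*}
This should coincide, up to the conventions of Proposition~\ref{prop:76}, with the formula that expresses the $k$-th moment of $\muw$ directly in terms of the permutations $\sigma_{\pi}$. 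A term-by-term match of the two sides then completes the proof.

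The main obstacle is showing that the Wick-style formula of Proposition~\ref{prop:84} — with its separate treatment of diagonal pairs (governed by the covariance matrix of~(\ref{eqn:28b})) and of off-diagonal pairs (governed by the parameters $w_i, w_j$ and the CCR-commutator $w_j - w_i$) — does indeed reassemble, after summation over $(i_1, \ldots , i_k)$ and multiplication by $w_{i_1}$, into a single clean product of power sums indexed by the orbits of $\sigma_{\pi}$. This reassembly is precisely the bridge between the matrix-side computation and the character-theoretic expression $\vvarphi(\tau_{\pi}) = \prod_V \symp_{|V|}$ which, via Proposition~\ref{prop:211}, is the same as the product over orbits of $\sigma_{\pi}$. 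Once this combinatorial identification is established, everything else is routine bookkeeping.
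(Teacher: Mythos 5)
Your overall strategy is the one the paper follows: expand $\varphi_{\uw}(M^k)$ cyclically, apply the Wick-style formula of Proposition~\ref{prop:84}, and compare against a $\sigma_{\pi}$-based expression for the moments of $\muw$. However, there are two concrete errors in the way you describe the middle of the argument.

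First, the compatibility condition is not ``$\ui$ is constant on the orbits of $\sigma_{\pi}$.'' Setting $\uj = \ui \circ \cyc_k$ (the cyclic shift coming from the trace), the Kronecker deltas in (\ref{eqn:84a}) collapse to the single condition $\ui \circ \cyc_k = \ui \circ \sigma_{\rho}^{\mathrm{blue}}$, i.e.~$\ui$ is constant on the orbits of $\cyc_k \cdot \sigma_{\rho}^{\mathrm{blue}}$. The factor $\cyc_k$ is essential: without it the orbits of $\sigma_{\pi}$ alone all have size $1$ or $2$, and the claimed product $\prod_V \symp_{|V|}$ would degenerate to a power of $\symp_2$, which is not the moment of $\muw$. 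Correspondingly, Proposition~\ref{prop:211} relates the orbits of $\tau_{\pi}$ to those of $\cyc_{k+1}\cdot\sigma_{\pi}$ (intersected with $B$), not of $\sigma_{\pi}$.

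Second, your proposed intermediate identity $\varphi_{\uw}(M^k) = \sum_{\pi}\prod_V \symp_{|V|}$ cannot be right as stated: the correct moment formula in terms of $\tau_{\pi}$ (Proposition~\ref{prop:56}) is a signed sum over $\pi \in \cP_{\leq 2}(k)$ carrying the weight $(-1)^{|\pi|_1/2}(|\pi|_1 - 1)!!$, not an unsigned sum over pair-partitions. These signs and double factorials are exactly what the blue/red colouring of Proposition~\ref{prop:84} encodes (red pairs, paired via the covariance matrix $C$ of (\ref{eqn:28b}), contribute the negative products $-w_{\ui(p)}w_{\ui(q)}$), and they are precisely what the bicoloured-pair-partition formulation of Proposition~\ref{prop:76} was designed to absorb. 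The ``reassembly into a single clean product of power sums'' that you name as the main obstacle is in fact unnecessary at this stage: the paper has already converted the $\tau_{\pi}$-formula into the $\sigma_{\pi}$-shape of Proposition~\ref{prop:76} (via Proposition~\ref{prop:71} and Corollary~\ref{cor:72}, which is where the $\symp_m$'s appear and disappear), so the final step is a direct term-by-term match between (\ref{eqn:86c}) and (\ref{eqn:76a}), once the deltas have been resolved. If you want to prove the theorem without invoking Section~7, you would be re-deriving that section in reverse, and you would need to be careful with exactly the sign/double-factorial bookkeeping that your sketch currently omits.
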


\vspace{6pt}

\begin{remark}     \label{rem:210}
In the special case when $w_1 = \cdots = w_d = 1/d$, the 
traceless CCR-GUE matrix $M$ from Definition \ref{def:28} 
becomes a usual traceless GUE matrix, as defined in 
Equation (\ref{eqn:23x}) at the beginning of this subsection.
Let us simply denote by $\mu_d$ the limit law $\muw$ which 
appears in this special case, and let us use the notation
$\nu_d$ for the law (a.k.a.~``average empirical eigenvalue 
distribution'') of the actual GUE matrix $G$.
It turns out that $\mu_d$ and $\nu_d$ are related by 
a nice convolution formula:
\begin{equation}    \label{eqn:210a}
\nu_d = \mu_d * N(0, 1/d^2),
\end{equation}
where $N(0, 1/d^2)$ is the centred normal distribution of variance
$1/d^2$.  In the paper \cite{KoNi2021}, the formula (\ref{eqn:210a}) 
was used in order to write explicitly the Laplace transform of the 
limit law $\mu_d$ (which is easily done, based on the fact that the 
Laplace transform of $\nu_d$ is well-known -- see e.g.~the 
Section 3.3 of the monograph \cite{AnGuZe2009}). 

The paper \cite{KoNi2021} is exclusively devoted to the case when
$w_1 = \cdots = w_d = 1/d$.  It is perhaps amusing
to note that the authors of that paper were not aware 
of the interpretation of $\mu_d$ as distribution of the traceless GUE,
and derived Equation (\ref{eqn:210a}) directly from the interpretation
of $\mu_d$ as limit law in the CLT for star-generators.  Once we know 
that $\mu_d$ is the distribution of the traceless GUE matrix $M$, it 
becomes of course much easier to obtain (\ref{eqn:210a}) via a 
calculation based on the relation $G = M + \frac{\zeta}{d} I_d$, which 
is a re-writing of the definition of $M$ in (\ref{eqn:23x}).
\end{remark}

$\ $

\subsection{Underlying combinatorics: two permutations associated 
to a \boldmath{$\pi \in \cP_2 (k)$}.}

$\ $

\noindent
In this subsection we point out an interesting combinatorial 
phenomenon which connects the Theorems 
\ref{thm:25} and \ref{thm:29} stated above.  The limit 
law $\muw$ turns out to be symmetric with finite moments of
all orders, and the proofs of both Theorems
\ref{thm:25} and \ref{thm:29} rely on explicit formulas 
for the even moments of $\muw$.  In both cases, these explicit 
formulas express the moment of order $k$ of $\muw$ as a 
summation over the set $\cP_2 (k)$ of pair-partitions of 
$\{ 1, \ldots , k \}$; and in both cases the general term of
the summation, indexed by a $\pi \in \cP_2 (k)$, involves a 
certain permutation in $S_{\infty}$ that is associated to $\pi$.
But then things go as follows.

\vspace{4pt}

\noindent
-- The moment formulas coming from Theorem \ref{thm:25}
use a permutation ``$\tau_{\pi}$'' associated 

to $\pi$, where $\tau_{\pi}$ is defined as a certain 
product of star-transpositions.

\vspace{4pt}

\noindent
-- The moment formulas needed in Theorem \ref{thm:29} are
Wick-style formulas, and use a 

permutation ``$\sigma_{\pi}$'' associated to $\pi$, where 
$\sigma_{\pi}$ is obtained by simply viewing every

pair of $\pi$ as a transposition.  

\vspace{4pt}

\noindent
At first sight, the permutations 
$\tau_{\pi}$ and $\sigma_{\pi}$ do not appear to be directly 
related to each other.  For a concrete example, say for instance
that $k=8$ and we are dealing with
\[
\pi = \bigl\{ \{ 3,8 \}, \{ 4,7 \}, \{ 1,6 \}, \{ 2,5 \} \bigr\}
\in \cP_2 (8).
\]
In order to be consistent with the notation used in the body of
the paper, we listed the pairs $V_1, \ldots , V_4$ of $\pi$ in 
decreasing order of their maximal elements: 
$V_1 = \{ 3,8 \}, \ldots , V_4 = \{ 2,5 \}$.  The permutation   
$\sigma_{\pi}$ simply is the product of the 4 disjoint transpositions
$(3,8), (4,7), (1,6)$ and $(2,5)$. The permutation $\tau_{\pi}$ is 
found via a more elaborate procedure, described as follows: on a 
picture of $\pi$ as shown in Figure 1 below, we draw $\gamma_1$'s 
on top of the two elements of $V_1$, we draw $\gamma_2$'s on top of
the two elements of $V_2$, and so on.  Then $\tau_{\pi}$ is defined
as the product of the 8 star-transpositions that were drawn in this
way, taken from left to right:
\[
\tau_{\pi} =  \gamma_3 \gamma_4 \gamma_1 \gamma_2 
\gamma_4 \gamma_3 \gamma_2 \gamma_1  
= (1,4)(1,5)(1,2)(1,3)(1,5)(1,4)(1,3)(1,2)
= (1,5,3) \in S_5 \subseteq S_{\infty}.
\]

\begin{center}
\setlength{\unitlength}{0.7cm}
$\pi = \
\begin{picture}(9,2) \thicklines
   \put(0,-1){\line(0,1){2}}
   \put(0,-1){\line(1,0){5}}
   \put(1,-0.5){\line(0,1){1.5}}
   \put(1,-0.5){\line(1,0){3}}
   \put(2,-1.5){\line(0,1){2.5}}
   \put(2,-1.5){\line(1,0){5}}
   \put(3,0){\line(0,1){1}}
   \put(3,0){\line(1,0){3}}
   \put(4,-0.5){\line(0,1){1.5}}
   \put(5,-1){\line(0,1){2}}
   \put(6,0){\line(0,1){1}}
   \put(7,-1.5){\line(0,1){2.5}}
   \put(-0.3,1.2){$\gamma_3$}
   \put(0.7,1.2){$\gamma_4$}
   \put(1.7,1.2){$\gamma_1$}
   \put(2.7,1.2){$\gamma_2$}
   \put(3.7,1.2){$\gamma_4$}
   \put(4.7,1.2){$\gamma_3$}
   \put(5.7,1.2){$\gamma_2$}
   \put(6.7,1.2){$\gamma_1$}
\end{picture}$

\vspace{1.5cm}

{\bf Figure 1.} {\em A picture used in the
computation of $\tau_{\pi}$, for a
$\pi \in \cP_2 (8)$.}
\end{center}

\vspace{10pt}

For a general $\pi \in \cP_2 (k)$, the product of 
star-transpositions which defines $\tau_{\pi}$ has 
2 occurrences of each of $\gamma_1, \ldots , \gamma_{k/2}$, 
thus cannot move any number $n > (k+2)/2$, and 
belongs to the subgroup $S_{(k+2)/2}$ of $S_{\infty}$.  
We are, in fact, only interested in the sizes of
orbits of $\tau_{\pi}$, as this is the information 
needed in order to compute $\chi ( \tau_{\pi} )$;
it is not obvious, though, how these sizes of orbits 
can be expressed in terms of the ``other'' permutation 
$\sigma_{\pi} \in S_k \subseteq S_{\infty}$. 
The nice fact we want to signal here is that the desired
orbit sizes can actually be read from the 
permutation $\cyc_{k+1} \cdot \sigma_{\pi} \in S_{k+1}$,
where $\cyc_{k+1} = (1,2, \ldots ,k,k+1)$.
More precisely, the following holds.

\vspace{6pt}

\begin{proposition}  \label{prop:211}
Let 
$\pi = \{ V_1, \ldots , V_{k/2} \} \in \cP_2 (k)$
and let us consider the permutations 
$\tau_{\pi} \in S_{(k+2)/2}$ and $\sigma_{\pi} \in S_k$,
as described above.  Consider moreover the decomposition
\[
\{ 1, \ldots , k+1 \} = R_1 \cup \cdots \cup R_p
\]
of $\{ 1, \ldots , k+1 \}$ into orbits
of $\cyc_{k+1} \cdot \sigma_{\pi}$.  Then the 
decomposition of $\{ 1, \ldots , (k+2)/2 \}$ into orbits 
of $\tau_{\pi}$ has $p$ orbits, of cardinalities
\[
| R_1 \cap B |, \ldots , | R_p \cap B |
\ \mbox{ (in some order),}
\]
where $B := 
\{ \max (V_i) \mid 1 \leq i \leq k/2 \} \cup \{ k+1 \}$.
\end{proposition}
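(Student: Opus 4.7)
The plan is to realize the cycle structure of $\tau_\pi$ via a first-return-map construction for $T := \cyc_{k+1} \cdot \sigma_\pi$. Assuming $B$ meets every $T$-orbit, define the first-return map $T_B : B \to B$ by $T_B(b) := T^{m(b)}(b)$ with $m(b) \geq 1$ the minimal integer such that $T^{m(b)}(b) \in B$. A standard fact from elementary dynamics then says that the orbits of $T_B$ coincide with the non-empty intersections $R_i \cap B$. Since $|B| = (k+2)/2$, it suffices to exhibit any bijection $\phi : \{1, \ldots, (k+2)/2\} \to B$ for which $T_B$ and $\tau_\pi$ have matching cycle types. The natural candidate is the order-preserving bijection $\phi(j) := $ ($j$-th smallest element of $B$), and I would aim for the stronger conclusion $T_B \circ \phi = \phi \circ \tau_\pi$ (which can be checked directly on the concrete $k=8$ example given just before the proposition).

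First I would verify that $B$ meets every $T$-orbit. An element of the complement $\{1, \ldots, k+1\} \setminus B$ is necessarily of the form $\min(V_s)$ for some $s$, on which $T$ acts by $\min(V_s) \mapsto \max(V_s) + 1$. If a trajectory were to remain entirely in $B^c$, one would get a sequence $m_1 = \min(V_{s_1}), m_2 = \min(V_{s_2}), \ldots$ in which $m_{j+1} = \max(V_{s_j}) + 1 > \max(V_{s_j}) > m_j$, producing a strictly increasing infinite sequence inside the finite set $\{1, \ldots, k\}$ --- a contradiction. Hence every $T$-orbit enters $B$.

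The main task, and the main obstacle, is then to identify $T_B$ with $\tau_\pi$ under $\phi$. My first attempt would be an induction on $k$, peeling off the pair $V_1 = \{m, k\}$ (whose maximum equals $k$ by the decreasing-max convention): this decomposes $\sigma_\pi = \sigma_{\pi'} \cdot (m, k)$ and factors $\tau_\pi$ in terms of $\tau_{\pi'}$ together with the two occurrences of $\gamma_1$ contributed by positions $m$ and $k$. The inductive step then reduces to checking that inserting the pair $V_1$ modifies the return map on the enlarged $B$ in precisely the way prescribed by the two new $\gamma_1$ factors. An alternative, more direct route is to trace, for each $b \in B$, the excursion $b, T(b), T^2(b), \ldots$ through $B^c$ until it re-enters $B$, and compare it step by step with the swaps performed by $\tau_\pi$ on the label $\phi^{-1}(b)$. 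Either way, the delicate point is that an excursion alternates between $\sigma_\pi$-jumps (min to partner max) and $\cyc_{k+1}$-increments, encoding a possibly long traversal of the pair-partition, whereas the action of $\tau_\pi$ is controlled instead by how the ``active slot at position $1$'' is exchanged at each of its $k$ star-transposition factors; bridging these two a priori different bookkeeping schemes is what presumably requires the careful combinatorial machinery developed in the paper's Section 6.
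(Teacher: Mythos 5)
Your overall strategy is the same one the paper carries out in Section~6: realize the cycle structure of $\tau_\pi$ by forming the induced (first-return) permutation of $\cyc_{k+1}\cdot\sigma_\pi$ on $B$, and check separately that $B$ meets every orbit. The argument you give for the latter (elements of $\{1,\ldots,k+1\}\setminus B$ are the numbers $\min(V_s)$, on which $T$ acts by $\min(V_s)\mapsto\max(V_s)+1>\min(V_s)$, so no $T$-cycle can live inside the complement) is correct and is essentially the content of the paper's Proposition~\ref{prop:67}. You also correctly flag that you have not proved the central conjugation step, only sketched two possible plans of attack; that gap alone would already make this incomplete.

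However, the specific ``stronger conclusion'' that you aim for is actually \emph{false}: the order-preserving bijection $\phi$ does \emph{not} in general intertwine $T_B := (\cyc_{k+1}\cdot\sigma_\pi)\big|_B$ with $\tau_\pi$. Take $k=6$ and $\pi=\bigl\{\{1,2\},\{3,5\},\{4,6\}\bigr\}$. Then $V_1=\{4,6\},V_2=\{3,5\},V_3=\{1,2\}$, so $B=\{2,5,6,7\}$; one computes $\tau_\pi=\gamma_3\gamma_3\gamma_2\gamma_1\gamma_2\gamma_1=(1,3,2)\in S_4$, while $\cyc_7\sigma_\pi=(1,3,6,5,4,7)(2)$, whose first return to $B$ is $T_B=(5,7,6)(2)$. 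The order-preserving $\phi$ sends $1\mapsto2,\,2\mapsto5,\,3\mapsto6,\,4\mapsto7$; then $(T_B\circ\phi)(1)=T_B(2)=2$, whereas $(\phi\circ\tau_\pi)(1)=\phi(3)=6$, so $T_B\circ\phi\neq\phi\circ\tau_\pi$. (Cycle types still match, of course, which is all the proposition asserts.) The reason this identity appeared to hold in the $k=8$ example is a coincidence: there $B=\{5,6,7,8,9\}$ is an interval, so the order-preserving and order-reversing labelings of $B$ differ only by the reversal $j\mapsto6-j$ of $\{1,\ldots,5\}$, and $\tau_\pi=(1,5,3)$ happens to be conjugated to its own inverse by that reversal. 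The correct exact statement --- the one the paper proves in Lemma~\ref{lemma:65} --- uses the \emph{order-reversing} labeling $f\colon b_i\mapsto i+1$ (recall $b_0>b_1>\cdots>b_\ell$) and conjugates $\tau_\pi$ to the first-return permutation of $\sigma_\pi\cdot\cyc_{k+1}^{-1}$, i.e.\ to $T_B^{-1}$, not to $T_B$ itself. So even if you carried out the induction or the excursion-tracing you describe, the invariant you would be trying to propagate is wrong, and you would have to switch both the direction of the labeling and the direction in which you iterate the permutation before the bookkeeping can be made to close.
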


\vspace{6pt}

In the concrete example shown in Figure 1: one has
\[
\cyc_9 \cdot \sigma_{\pi}
= (1,2, \ldots , 9) \cdot (3,8)(4,7)(1,6)(2,5)
= (1,7,5,3,9)(2,6)(4,8).
\]
Intersecting the orbits of $\cyc_9 \cdot \sigma_{\pi}$
with $B = \{ 5,6,7,8,9 \}$ splits $B$
into a $3+1+1$ partition, indeed of the 
same type as the partition of $\{ 1, \ldots , 5 \}$ into
orbits of $\tau_{\pi}$.

Proposition \ref{prop:211} (or rather, a slight generalization 
shown in Section 6 below, which allows $\pi$ to have 
singleton blocks) is the combinatorial link connecting 
Theorems \ref{thm:25} and \ref{thm:29}.

$\ $

\section{Background: the framework of $( \cM , \tr )$ and 
the law of large numbers}

In this section we clarify what is the non-commutative probability 
space we will work with, and we discuss the law of large numbers
which is a prerequisite for a theorem of CLT kind.
We will use the framework considered in Section 2.1: 
the tuple of weights $\uw = (w_1, \ldots , w_d)$ 
that is fixed for the whole paper, the symmetric power sums 
$1 = \symp_1 > \symp_2 > \cdots > \symp_n > \cdots$ 
from Equation (\ref{eqn:22b}), the character 
$\chi : S_{\infty} \to \bR$ defined by using the $\symp_n$'s
in Equation (\ref{eqn:22c}), and the GNS representation 
$U : S_{\infty} \to B( \cH )$ of the character $\chi$, which was
introduced in Notation \ref{def:23}.

$\ $

\subsection{The \boldmath{$W^{*}$}-probability space 
\boldmath{$( \cM , \tr )$}.}

\begin{remark}   \label{rem:31}
$1^o$ Recall that we use the 
notation $\widehat{\sigma}$ for the vector in $\cH$ corresponding 
to a permutation $\sigma \in S_{\infty}$.  From the prescription 
of inner products stated in Notation \ref{def:23} it is clear 
that $\| \widehat{\sigma} \| = 1$ for every 
$\sigma \in S_{\infty}$, while for any $\sigma \neq \tau$ in 
$S_{\infty}$, the common value of
$\langle \widehat{\sigma}, \widehat{\tau} \rangle$ and
$\langle \widehat{\tau}, \widehat{\sigma} \rangle$ falls 
in the interval $(0,1)$.  This also has the immediate consequence
that the map $S_{\infty} \ni \sigma \mapsto \widehat{\sigma} \in \cH$
is injective, since
\[
\| \widehat{\sigma} - \widehat{\tau} \|^2 
= 2 \bigl( 1 - \vvarphi ( \sigma \tau^{-1} ) \, \bigr) > 0, 
\ \ \forall  \, \sigma \neq \tau 
\mbox{ in } S_{\infty}.
\]

\vspace{3pt}

\noindent
$2^o$ In general, we cannot count on the set 
$\{ \widehat{\sigma} \mid \sigma \in S_{\infty} \} \subseteq \cH$
to be linearly independent.  But we can at least be sure that
the sequence $( \,  \widehat{\gamma_n} \, )_{n=1}^{\infty}$ is
linearly independent.  This is because, for every $n \geq 1$, 
the Gram matrix associated to the vectors
$\widehat{\gamma_1}, \ldots , \widehat{\gamma_n}$ has entries
\[
\langle \widehat{\gamma_i} , \widehat{\gamma_j} \rangle
= \vvarphi ( \gamma_i \gamma_j )
= \left\{  \begin{array}{ll}
1, & \mbox{ if $i=j$,}   \\
\llambda_3, & \mbox{ if $i \neq j$;}
\end{array}   \right.
\]
this Gram matrix is then found to be invertible, 
since $0 < \llambda_3 < 1$.
\end{remark}

\vspace{6pt}

\begin{notation-and-remark}   \label{def:32}
$1^o$ It is immediate that the linear span of the operators 
$U( \sigma )$ is a unital $*$-subalgebra of $B( \cH )$ and, 
consequently, that the closure  
\begin{equation}   \label{eqn:32a}
\cM : = \overline{\linspan}^{\WOT} 
\{ U(\sigma) : \sigma \in S_{\infty} \} \subseteq B(\cH)
\end{equation}
is a von Neumann algebra of operators on $\cH$.  

\vspace{3pt}

\noindent
$2^o$ Consider the special vector $\widehat{\cyc_1} \in \cH$
(where recall that ``$\cyc_1$'' stands for the unit of $S_{\infty}$).
Then $\widehat{\cyc_1}$ is a trace-vector for $\cM$; that is,
the vector-state
\begin{equation}   \label{eqn:32b}
\tr : \cM \to \bC, \ \ \tr (T) :=
\langle \, T ( \widehat{\cyc_1} ), \widehat{\cyc_1} \, \rangle
\mbox{ for } T \in \cM
\end{equation}
is a {\em trace}.  The trace property of $\tr$ is an immediate
consequence of the fact that $\chi$ is a class-function: 
one has 
$\tr \bigl( \, U( \sigma ) U ( \tau ) \, \bigr)
= \chi ( \sigma \tau ) = \chi ( \tau \sigma )
= \tr \bigl( \, U( \tau ) U ( \sigma ) \, \bigr)$
for every $\sigma , \tau \in S_{\infty}$,
which then implies that $\tr (AB) = \tr (BA)$ for all 
$A,B \in \cM$.  

\vspace{3pt}

\noindent
$3^o$ Standard arguments using the representation
$U' : S_{\infty} \to B( \cH )$ defined by multiplication on 
the right ($[ U'( \sigma ) \bigl] \, ( \widehat{\tau} ) 
= \widehat{ \tau  \sigma^{-1}}$
for $\sigma, \tau \in S_{\infty}$)
yield the fact that the linear map
\begin{equation}   \label{eqn:32c}
\cM \ni T \mapsto T ( \, \widehat{\cyc_1} \, ) \in \cH
\end{equation}
is injective. 
Hence putting
$\| T \|_2 := \| \, T( \widehat{\cyc_1} ) \, \|$ for $T \in \cM$
defines a norm on $\cM$.  A known property of this norm
(see e.g. \cite[Section II.2]{Takesaki}) is that it metrizes the 
SOT-topology on the 
unit ball $\cB := \{ T \in \cM \mid \ \|T\| \leq 1 \}$.  So,
in particular: if $T$ and $( T_n )_{n=1}^{\infty}$ are in $\cB$,
then verifying that $\limn \| T_n - T \|_2 = 0$ will be 
sufficient to ensure that $( T_n )_{n=1}^{\infty}$ is 
SOT-convergent to $T$.

\vspace{3pt}

\noindent
$4^o$  As a consequence of the injectivity of the map 
(\ref{eqn:32c}), the trace-state $\tr$ is
{\em faithful}, i.e.~has the property that 
$\tr (T^{*}T) > 0$ for every $T \neq 0$ in $\cM$.

In summary: we see that $\tr$ is a WOT-continuous 
faithful trace-state of the von Neumann algebra $\cM$.  
The couple $( \cM , \tr )$ is what 
one refers to as a {\em tracial $W^{*}$-probability space};
this $( \cM , \tr )$ is the framework we will use throughout the paper.
\end{notation-and-remark}

$\ $

\subsection{The operator \boldmath{$A_0 \in \cM$}
and the law of large numbers for \boldmath{$U( \gamma_n )$}'s.}

$\ $

\noindent
We now start looking at the sequence of operators 
$U( \gamma_n )$ in $\cM$.  Observe that the $U( \gamma_n )$'s 
are linearly independent; this is clear from the fact that, 
upon applying these operators to the vector $\widehat{\cyc_1}$,
we get the linearly independent sequence
of vectors $( \widehat{\gamma_n} )_{n=1}^{\infty}$ of $\cH$.
It turns out that the sequence 
$\bigl( \, U( \gamma_n ) \, \bigr)_{n=1}^{\infty}$ is
WOT-convergent.  The limit is a special operator $A_0 \in \cM$
used in \cite{GoKo2010}, which played a crucial role in the 
considerations of non-commutative dynamics made in that paper. 
For the sake of keeping the presentation
self-contained, we include below the relatively easy proofs of the 
facts we need, in particular (cf.~Proposition \ref{prop:34}) of the 
occurrence of $A_0$ in the law of large numbers we are interested in. 

\vspace{6pt}

\begin{proposition}   \label{prop:33}
The sequence $\bigl( U( \gamma_n ) \bigr)_{n=1}^{\infty}$ has 
a WOT-limit $A_0 \in \cM$, where $A_0 = A_0^{*}$ and $\| A_0 \| \leq 1$.
The operator $A_0$ can be described via its action on vectors, 
as follows:
\begin{equation}   \label{eqn:33a}
\left\{   \begin{array}{c}
\mbox{ For every $\sigma, \tau \in S_{\infty}$, one has } 
\langle  \, A_0 ( \widehat{\sigma} ), \widehat{\tau} \, \rangle 
= \frac{ \llambda_{1+|V|} }{ \llambda_{|V|} } \,
\langle \, \widehat{\sigma}, \widehat{\tau} \, \rangle,     \\  \vspace{4pt}
\mbox{ where $V$ is the orbit of $\sigma \tau^{-1}$ which 
contains the number $1$.}
\end{array}   \right.
\end{equation}
\end{proposition}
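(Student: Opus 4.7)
The plan is to first compute the matrix coefficients $\langle U(\gamma_n)\widehat{\sigma}, \widehat{\tau}\rangle$ in closed form for $n$ large enough, and then bootstrap to a WOT-convergence statement using the uniform bound $\|U(\gamma_n)\| = 1$. By the GNS recipe of Notation \ref{def:23}, the matrix coefficient equals $\vvarphi(\gamma_n \sigma\tau^{-1})$. Write $\rho := \sigma\tau^{-1}$ and take $n$ so large that $n+1$ is a fixed point of $\rho$. A direct cycle-notation calculation then shows that the only effect of left-multiplying $\rho$ by $\gamma_n = (1, n+1)$ is to insert the fixed point $n+1$ into the orbit $V$ of $\rho$ containing $1$, producing a single orbit of $\gamma_n \rho$ of size $|V|+1$; every other orbit of $\rho$ is unchanged. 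Plugging this into the product formula (\ref{eqn:22c}) for $\vvarphi$ --- and noting that the edge case $|V|=1$ is absorbed by the normalisation $\llambda_1 = w_1 + \cdots + w_d = 1$ --- yields the uniform identity
$$\vvarphi(\gamma_n \rho) \;=\; \frac{\llambda_{|V|+1}}{\llambda_{|V|}}\,\vvarphi(\rho),$$
which, via $\vvarphi(\rho) = \langle \widehat{\sigma}, \widehat{\tau}\rangle$, is precisely the right-hand side of (\ref{eqn:33a}).

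With this stabilization in hand, I set $\cH_0 := \linspan\{\widehat{\sigma} : \sigma \in S_\infty\}$ (dense in $\cH$ by Notation \ref{def:23}) and define a sesquilinear form $B$ on $\cH_0 \times \cH_0$ by $B(\xi,\eta) := \lim_{n \to \infty} \langle U(\gamma_n)\xi,\eta\rangle$; the limit exists on $\cH_0 \times \cH_0$ by the previous step together with bilinearity. Because each $U(\gamma_n)$ is a symmetry with norm $1$, one has $|B(\xi,\eta)| \leq \|\xi\|\,\|\eta\|$ on $\cH_0 \times \cH_0$, so $B$ extends uniquely to a bounded sesquilinear form on $\cH \times \cH$ of norm at most $1$, and the Riesz representation theorem yields a unique $A_0 \in B(\cH)$ with $\|A_0\| \leq 1$ realising it. A standard $\varepsilon/3$ density argument, again exploiting $\sup_n \|U(\gamma_n)\| = 1$, then upgrades the pointwise-on-$\cH_0$ convergence to genuine WOT-convergence of $(U(\gamma_n))$ to $A_0$ on all of $\cH \times \cH$.

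The remaining assertions are immediate. Each $U(\gamma_n)$ is self-adjoint, and self-adjointness is preserved by WOT-limits, so $A_0 = A_0^*$; and $A_0 \in \cM$ because $\cM$ is WOT-closed in $B(\cH)$ and contains every $U(\gamma_n)$. The only genuinely substantive step is the combinatorial cycle-structure calculation described above, and the one subtlety there --- checking that the boundary case $|V|=1$ is compatible with the general formula --- dissolves as soon as one invokes $\llambda_1 = 1$; everything else is a routine density-and-uniform-boundedness extension.
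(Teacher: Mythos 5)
Your proposal is correct and follows essentially the same path as the paper's own proof: stabilize the matrix coefficient $\langle U(\gamma_n)\widehat{\sigma},\widehat{\tau}\rangle = \vvarphi(\gamma_n\sigma\tau^{-1})$ for $n$ large via the same orbit-insertion observation, bound the resulting sesquilinear form by $1$ and invoke Riesz to produce $A_0$, and then inherit $A_0 = A_0^* \in \cM$, $\|A_0\| \le 1$ from the corresponding properties of the $U(\gamma_n)$'s. The only noticeable difference is cosmetic: you spell out the $\varepsilon/3$ density upgrade and the $|V|=1$ edge case (absorbed by $\llambda_1 = 1$), both of which the paper leaves implicit.
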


\begin{proof}   We start by verifying a formula related to 
(\ref{eqn:33a}), stated as follows: let $\sigma, \tau$ be in 
$S_{\infty}$, let $k \geq 1$
be such that $\sigma , \tau \in S_k$ (i.e.~such that 
$\sigma (n) = \tau (n) = n$ for all $n > k$), and let $V$ be the
orbit of $\sigma \tau^{-1}$ which contains the number $1$.  Then  
\begin{equation}   \label{eqn:33b}
\langle  \, [U( \gamma_n )] ( \widehat{\sigma} ), \widehat{\tau} \, \rangle 
= \frac{ \llambda_{1+|V|} }{ \llambda_{|V|} } \,
\langle \, \widehat{\sigma}, \widehat{\tau} \, \rangle, 
\ \ \forall \, n \geq k.
\end{equation}
Indeed, the equality stated in (\ref{eqn:33b}) 
amounts to 
\[
\vvarphi ( \gamma_n \sigma \tau^{-1} ) 
= \frac{ \llambda_{1+|V|} }{ \llambda_{|V|} } \,
\vvarphi ( \sigma \tau^{-1} ), \ \ \forall \, n \geq k.
\]
This follows directly from the formula defining $\vvarphi$ in 
(\ref{eqn:22c}), combined with the following observation 
about orbit structure: when $n \geq k$, 
the permutation $\gamma_n \sigma \tau^{-1}$ has the same orbits 
as $\sigma \tau^{-1}$, with the only difference that the
number $n+1$ is now inserted into the orbit $V$ of $\sigma \tau^{-1}$,
right before the occurrence of $1$ in that orbit.  (Denoting 
$( \tau \sigma^{-1} ) (1) := h \in \{ 1, \ldots , k \}$, one 
has that $\sigma \tau^{-1}$ sends $h$ directly to $1$, while 
$\gamma_n \sigma \tau^{-1}$ sends $h \mapsto n+1 \mapsto 1$.)

From (\ref{eqn:33b}) it easily follows, by approximating
arbitrary $\xi_1, \xi_2 \in \cH$ with linear combinations of vectors 
$\widehat{\sigma}, \widehat{\tau}$, that the limit 
\begin{equation}   \label{eqn:33c}
\beta ( \xi_1 , \xi_2 ) := \lim_{n \to \infty} 
\langle  \, [U( \gamma_n )] ( \xi_1 ), \xi_2 \, \rangle 
\end{equation}
exists for every $\xi_1, \xi_2 \in \cH$.  Equation (\ref{eqn:33c})
defines a sesquilinear functional $\beta$ on $\cH$, with the 
property that 
$| \, \beta ( \xi_1, \xi_2 ) \, | \leq \| \xi_1 \| \cdot \| \xi_2 \|$
for all $\xi_1, \xi_2 \in \cH$, and a standard argument infers from 
here the existence of an $A_0 \in B( \cH )$ such that 
$\beta ( \xi_1 , \xi_2 ) = \langle A_0 ( \xi_1 ) , \xi_2 \rangle$ for 
all $\xi_1 , \xi_2 \in \cH$.  In combination with (\ref{eqn:33c}),
the latter formula says
that the sequence $( U( \gamma_n ) )_{n=1}^{\infty}$ converges WOT to 
$A_0$.  We have that $A_0 \in \cM$ with $A_0^{*} = A_0$ and 
$\| A_0 \| \leq 1$, because the $U( \gamma_n )$'s have these properties.
Finally, making $n \to \infty$ in Equation (\ref{eqn:33b}) gives the 
formula (\ref{eqn:33a}).
\end{proof}

\vspace{6pt}

\begin{proposition}   \label{prop:34}
{\em (Law of large numbers.)}  
Let $A_0 \in \cM$ be as in  
Proposition \ref{prop:33}.  Then
\begin{equation}   \label{eqn:34a}
\mathrm{SOT}-\lim_{n \to \infty} \frac{1}{n}
\sum_{i=1}^n U(\gamma_i) = A_0.
\end{equation}
\end{proposition}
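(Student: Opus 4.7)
My plan is to upgrade the $\mathrm{WOT}$-convergence $U(\gamma_n) \to A_0$ from Proposition~\ref{prop:33} to $\mathrm{SOT}$-convergence of Cesàro averages. The key observation is the one recorded in Notation-and-Remark~\ref{def:32}(3): the $2$-norm $\|T\|_2 := \|T(\widehat{\cyc_1})\|$ metrizes the $\mathrm{SOT}$ topology on the unit ball $\cB$ of $\cM$. Since each $U(\gamma_i)$ is a symmetry, the average $S_n := \frac{1}{n}\sum_{i=1}^n U(\gamma_i)$ satisfies $\|S_n\| \leq 1$, and we already know $\|A_0\| \leq 1$; thus both $S_n$ and $A_0$ lie in $\cB$, and it will suffice to prove that $\|S_n - A_0\|_2 \to 0$.

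The next step is a direct computation using the fact that $S_n - A_0$ is selfadjoint:
\[
\|S_n - A_0\|_2^2 \;=\; \tr\bigl( (S_n - A_0)^2 \bigr)
\;=\; \tr(S_n^2) - 2\,\tr(S_n A_0) + \tr(A_0^2).
\]
I would evaluate each of the three terms separately and show that they combine to yield something of order $1/n$.

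For $\tr(S_n^2)$: I expand as $\frac{1}{n^2}\sum_{i,j=1}^n \chi(\gamma_i\gamma_j)$. A short check shows $\gamma_i\gamma_j = (1, j{+}1, i{+}1)$ is a $3$-cycle for $i \neq j$ and equals the identity if $i=j$. Thus the sum equals $\frac{1}{n} + \frac{n-1}{n}\,\llambda_3$. For $\tr(S_n A_0)$: because each $\gamma_i$ is an involution, $\tr(U(\gamma_i)A_0) = \langle A_0\widehat{\cyc_1}, \widehat{\gamma_i}\rangle$, which by Equation~(\ref{eqn:33a}) (with $\sigma = \cyc_1$, $\tau = \gamma_i$, so that the orbit of $\sigma\tau^{-1} = \gamma_i$ through $1$ has size $2$) equals $(\llambda_3/\llambda_2)\cdot\llambda_2 = \llambda_3$. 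So $\tr(S_n A_0) = \llambda_3$ for every $n$. For $\tr(A_0^2)$: applying WOT-convergence of $U(\gamma_n)$ to the vectors $\xi = A_0\widehat{\cyc_1}$ and $\eta = \widehat{\cyc_1}$ gives $\tr(A_0^2) = \lim_n \tr(U(\gamma_n)A_0) = \llambda_3$.

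Putting the three evaluations together,
\[
\|S_n - A_0\|_2^2 \;=\; \Bigl( \tfrac{1}{n} + \tfrac{n-1}{n}\llambda_3 \Bigr) - 2\llambda_3 + \llambda_3 \;=\; \tfrac{1 - \llambda_3}{n},
\]
which converges to $0$, completing the proof. There is no real obstacle here; the only point requiring slight care is the WOT-based justification of $\tr(A_0^2) = \llambda_3$, and the mild bookkeeping needed to recognize that $\gamma_i\gamma_j$ is a $3$-cycle for $i\neq j$ (hence contributes $\llambda_3$ to $\chi$). The computation in fact gives an explicit $O(1/n)$ rate of convergence in $\|\cdot\|_2$, which is more than needed for $\mathrm{SOT}$-convergence.
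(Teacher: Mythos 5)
Your proposal is correct and follows essentially the same route as the paper: compute $\|S_n - A_0\|_2^2$ and show it equals $(1-\llambda_3)/n$, then invoke the fact from Notation-and-Remark~\ref{def:32}(3) that $\|\cdot\|_2$ metrizes SOT on the unit ball. The paper writes the same three-term expansion directly as a sum of inner products of vectors $\widehat{\gamma_i}$ and $A_0(\widehat{\cyc_1})$, while you phrase it as $\tr(S_n^2) - 2\tr(S_n A_0) + \tr(A_0^2)$, but these are literally the same quantities, and your individual evaluations ($\tr(S_n^2) = \tfrac{1}{n} + \tfrac{n-1}{n}\llambda_3$ via the $3$-cycle observation, $\tr(S_n A_0) = \llambda_3$ via Equation~(\ref{eqn:33a}), $\tr(A_0^2) = \llambda_3$ via WOT-convergence) match the paper's step for step.
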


\begin{proof} We will prove that 
\begin{equation} \label{eqn:34b}
\left\| \, \frac{1}{n} 
\sum_{i=1}^n U(\gamma_i) - A_0 \, \right\|_2^2 
= \frac{1 - \llambda_3}{n}, \ \ \forall \, n \geq 1.
\end{equation}
Making $n \to \infty$ in (\ref{eqn:34b}) and 
using the fact about SOT-convergence that was noted in 
Remark \ref{def:32}.3 will then yield the 
statement of the proposition.

The squared $\| \cdot \|_2$-norm indicated on the left-hand side of 
(\ref{eqn:34b}) is equal, by definition, to the squared norm of 
the vector $\frac{1}{n}  \sum_{i=1}^n  
\widehat{\gamma_i} - A_0 ( \widehat{\cyc_1} ) \in \cH$,
which is in turn equal to:
\begin{equation}   \label{eqn:34c}
\frac{1}{n^2} \left\| \sum_{i=1}^n \widehat{\gamma_i} \right\|_2^2
- 2 \mathrm{Re} \, \left\la \frac{1}{n} \sum_{i=1}^n 
\widehat{\gamma_i} , A_0 ( \widehat{\cyc_1} ) \right\ra 
+ \| \, A_0 ( \widehat{\cyc_1} ) \, \|^2.
\end{equation}
Let us then fix an $n \in \bN$ and compute the quantities 
indicated in (\ref{eqn:34c}).  First,
\[ \left\| \sum_{i=1}^n \widehat{\gamma_i} \right\|_2^2
= \sum_{i,j=1}^n \langle 
\, \widehat{\gamma_i}, \widehat{\gamma_j} \, \rangle
= \sum_i \vvarphi ( \gamma_i^2 ) 
+ \sum_{i \neq j} \vvarphi ( \gamma_i \gamma_j )
= n \cdot 1 + (n^2 -n) \llambda_3 \text{.} \]
Then we note that
\begin{equation}   \label{eqn:34d} 
\langle \, A_0 ( \widehat{\cyc_1} ), 
\widehat{\gamma_i} \, \rangle
= \lim_{m \to \infty} \langle \, 
[ U ( \gamma_m ) ] \, ( \widehat{\cyc_1} ), 
 \widehat{\gamma_i} \, \rangle
 = \lim_{m \to \infty}
 \vvarphi ( \gamma_m \gamma_i ) = \llambda_3,
 \ \ \forall \, i \in \bN.
 \end{equation}
Finally, for $\| \, A_0 ( \widehat{\cyc_1} ) \, \|^2$
we compute:
\[
\langle \, A_0 ( \widehat{\cyc_1} ),
A_0 ( \widehat{\cyc_1} ) \, \rangle 
= \lim_{m \to \infty} 
\langle \, [ U( \gamma_m ) ] \, ( \widehat{\cyc_1} ),
A_0 ( \widehat{\cyc_1} ) \, \rangle
= \lim_{m \to \infty} 
\langle \, \widehat{\gamma_m} ,
A_0 ( \widehat{\cyc_1} ) \, \rangle  
= \llambda_3,
\]
where at the first equality sign we invoked the 
fact that $A_0$ is the WOT-limit of the $U( \gamma_m )$'s,
and at the third equality sign we made use of
(\ref{eqn:34d}).  Upon putting all these things together
and plugging them into (\ref{eqn:34c}), we find that the 
quantity appearing in (\ref{eqn:34c}) is
indeed equal to $(1 - \llambda_3)/n$,
as claimed in (\ref{eqn:34b}).
\end{proof}

\vspace{6pt}

The calculation of $\| \, A_0 ( \widehat{\cyc_1} ) \, \|^2$
(equivalently, of $\tr ( A_0^2 )$) shown above gives a special case
of a formula for moments with respect to the trace, which we record
in the next proposition.

\vspace{6pt}

\begin{proposition}   \label{prop:35} 
Consider a tuple 
$\ui : \{ 1, \ldots ,k \} \to \bN \cup \{ \infty \}$
and the operators $T_1, \ldots , T_k \in \cM$ defined by
\begin{equation}  \label{eqn:35a}
T_h = \left\{  \begin{array}{ll}
U( \gamma_{\ui (h)} ), &  \mbox{ if $\ui (h) \in \bN$,} \\
A_0,                   &  \mbox{ if $\ui (h) = \infty$}
\end{array}   \right\} , \ \ 1 \leq h \leq k.
\end{equation}
Let $\uj : \{ 1, \ldots , k \} \to \bN$ be a tuple obtained out 
of $\ui$ by replacing its infinite values by some ``new, distinct 
finite values''.  That is, $\uj$ is obtained by using the following 
procedure:
\begin{equation}   \label{eqn:35b}
\begin{array}{ll}
\vline  & 
\mbox{-- let $Q := \{ q \in \bN \mid \exists \, 1 \leq h \leq k$
such that $\ui (h) = q \}$;    }  \\ 
\vline  &                         \\     
\vline  &
\mbox{-- choose an injective function 
   $f : \ui^{-1} ( \infty )  \to \bN \setminus Q$; }          \\ 
\vline  &                          \\
\vline  &   
\mbox{-- define $\uj : \{ 1, \ldots , k \} \to \bN$ by putting }
\uj (h) = \left\{  \begin{array}{ll}
\ui (h), &  \mbox{ if $\ui (h) \in \bN$,} \\
f(h),    &  \mbox{ if $\ui (h) = \infty$}
\end{array}  \right\} .
\end{array}
\end{equation}
Then one has that
\begin{equation}   \label{eqn:35c}
\tr (T_1 \cdots T_k) = \tr \Bigl( 
\, U( \gamma_{\uj (1)} ) \cdots U( \gamma_{\uj (k)} ) \, \Bigr)
= \vvarphi ( \gamma_{\uj (1)} \cdots \gamma_{\uj (k)} ).
\end{equation}
\end{proposition}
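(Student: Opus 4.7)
The plan is to establish (\ref{eqn:35c}) in three stages, with the core being an induction on the number $s := |\ui^{-1}(\infty)|$ of positions where $\ui$ takes the value $\infty$. The second equality in (\ref{eqn:35c}) is essentially definitional: since $U$ is a representation of $S_{\infty}$, one has $U(\gamma_{\uj(1)}) \cdots U(\gamma_{\uj(k)}) = U(\gamma_{\uj(1)} \cdots \gamma_{\uj(k)})$, and $\tr(U(\sigma)) = \langle \widehat{\sigma}, \widehat{\cyc_1} \rangle = \vvarphi(\sigma)$ by Notation \ref{def:23}. So the real content is the first equality of (\ref{eqn:35c}).

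Next I would record a structural observation: the value $\vvarphi(\gamma_{\uj(1)} \cdots \gamma_{\uj(k)})$ depends only on the equality pattern of the tuple $\uj$. Indeed, if $\uj$ and $\uj'$ share this pattern, there is $\pi \in S_{\infty}$ with $\pi(1)=1$ and $\pi(\uj(h)+1) = \uj'(h)+1$ for every $h$, so $\pi \gamma_{\uj(h)} \pi^{-1} = \gamma_{\uj'(h)}$, making the two products conjugate in $S_{\infty}$; since $\vvarphi$ is a class-function, their $\vvarphi$-values agree. Inspecting (\ref{eqn:35b}), any two valid $\uj, \uj'$ arising from the same $\ui$ share the equality pattern in which $\uj(h) = \uj(h')$ precisely when ($h, h' \in \ui^{-1}(\bN)$ with $\ui(h) = \ui(h')$) or ($h = h' \in \ui^{-1}(\infty)$). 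Consequently, the right-hand side of (\ref{eqn:35c}) depends only on $\ui$; let $\vvarphi_{\ui}$ denote its common value.

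The main step is then to prove $\tr(T_1 \cdots T_k) = \vvarphi_{\ui}$ by induction on $s$. The case $s=0$ is immediate with $\uj = \ui$. For the inductive step, I would pick any $h^* \in \ui^{-1}(\infty)$, and for $m \in \bN$ let $\ui^{(m)}$ be obtained from $\ui$ by replacing $\ui(h^*) = \infty$ with $m$; write $(T_1^{(m)}, \ldots, T_k^{(m)})$ for the corresponding tuple, so that $T_{h^*}^{(m)} = U(\gamma_m)$ while $T_h^{(m)} = T_h$ for $h \neq h^*$. The vectors $\xi := T_{h^*+1} \cdots T_k \, \widehat{\cyc_1}$ and $\eta := T_{h^*-1}^{*} \cdots T_1^{*} \, \widehat{\cyc_1}$ are independent of $m$, so the WOT-convergence from Proposition \ref{prop:33} yields $\tr(T_1^{(m)} \cdots T_k^{(m)}) = \langle U(\gamma_m) \xi, \eta \rangle \to \langle A_0 \xi, \eta \rangle = \tr(T_1 \cdots T_k)$ as $m \to \infty$. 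On the other hand, $\ui^{(m)}$ has $s-1$ infinity-positions, so the induction hypothesis gives $\tr(T_1^{(m)} \cdots T_k^{(m)}) = \vvarphi_{\ui^{(m)}}$. Fixing a valid $\uj$ for $\ui$ and choosing $\uj^{(m)}$ that agrees with $\uj$ off $h^*$ and has $\uj^{(m)}(h^*) = m$, one checks that once $m$ exceeds all other entries of $\uj$, the tuples $\uj^{(m)}$ and $\uj$ share their equality patterns, so $\vvarphi_{\ui^{(m)}} = \vvarphi_{\ui}$. Taking $m \to \infty$ completes the induction.

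The main non-trivial ingredient here is the WOT-convergence $U(\gamma_m) \to A_0$ already established in Proposition \ref{prop:33}; everything else is bookkeeping with equality patterns. The one point that deserves care is verifying, in the inductive step, that the pattern of $\uj^{(m)}$ matches that of $\uj$ for sufficiently large $m$, but this is straightforward once the construction in (\ref{eqn:35b}) is unpacked.
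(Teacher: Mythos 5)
Your proof is correct and follows essentially the same route as the paper's: an induction on the number of $\infty$-positions, with the inductive step driven by the WOT-convergence $U(\gamma_m)\to A_0$ from Proposition \ref{prop:33} and the class-function property of $\vvarphi$ (the paper phrases the latter in terms of matching orbit structures rather than equality patterns, but these are the same observation). Your treatment of the $m\to\infty$ limit -- pinning down the fixed vectors $\xi,\eta$ and applying WOT-convergence directly to $\langle U(\gamma_m)\xi,\eta\rangle$ -- is a slightly more hands-on way of expressing the paper's appeal to WOT-continuity of $\tr$ together with $T_1^{(n)}\cdots T_k^{(n)}\to T_1\cdots T_k$, but the substance is identical.
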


\begin{proof} We proceed by induction on the cardinality
$\ell$ of $\ui^{-1} ( \infty )$.
The base case of the induction is $\ell = 0$; in this case
(\ref{eqn:35c}) holds trivially, since the procedure described
in (\ref{eqn:35b}) gives $\uj = \ui$.  We thus focus
on the induction step: we fix an $\ell \geq 1$, we assume the 
statement of the proposition holds for $\ell -1$, and we will 
prove that it holds for $\ell$ as well.

So let $\ui : \{ 1, \ldots , k \} \to \bN \cup \{ \infty \}$ with 
$| \ui^{-1}(\infty) | = \ell$,
and let $T_1, \ldots , T_k \in \cM$ be as 
in (\ref{eqn:35a}). 
We consider a tuple $\uj$ defined as in
(\ref{eqn:35b}), and we aim to prove 
that (\ref{eqn:35c}) holds.

We choose an $h_o \in \ui^{-1} ( \infty ) \subseteq \{ 1, \ldots , k \}$
and an $n_o \in \bN$ such that $n_o > \max (Q)$ and 
$n_o > \max \{ f(h) \mid h \in \ui^{-1} ( \infty ) \}$, where $f$ is the 
function used in (\ref{eqn:35b}) in order to construct the tuple $\uj$.
For every $n \geq n_o$ we consider the tuples 
$\ui^{(n)} : \{ 1, \ldots , k \} \to \bN \cup \{ \infty \}$ and
$\uj^{(n)} : \{ 1, \ldots , k \} \to \bN$ defined by 
\begin{equation}   \label{eqn:35d}
\ui^{(n)} (h) := \left\{ \begin{array}{ll}
\ui (h), & \mbox{ if $h \neq h_o$,}  \\
n, & \mbox{ if $h = h_o$}
\end{array}  \right\}, \ \mbox{ and }
\ \uj^{(n)} (h) := \left\{ \begin{array}{ll}
\uj (h), & \mbox{ if $h \neq h_o$,}  \\
n, & \mbox{ if $h = h_o$}
\end{array}  \right\} .
\end{equation}
For such $n$ we also consider the operators
$T_1^{(n)}, \ldots , T_k^{(n)} \in \cM$ defined by the same 
recipe as in Equation (\ref{eqn:35a}), but by starting from the 
tuple $\ui^{(n)}$ instead of $\ui$.  

For every $n \geq n_o$, the pre-image
$\bigl( \ui^{(n)} \bigr)^{-1} ( \infty )$ has cardinality 
$\ell -1$, hence the induction hypothesis applies to 
$\ui^{(n)}$.  It is immediate that, when used on 
$\ui^{(n)}$, the procedure described in (\ref{eqn:35b}) 
can be arranged to lead to the tuple $\uj^{(n)}$.  We thus
obtain that 
\begin{equation}  \label{eqn:35e}
\tr \bigl( T_1^{(n)} \cdots  T_k^{(n)} \bigr)
= \vvarphi \bigl( \gamma_{\uj^{(n)}(1)} 
\cdots \gamma_{\uj^{(n)}(k)} \bigr), \ \ n \geq n_o.
\end{equation}

We leave it as an exercise to the reader to check that,
for every $n \geq n_o$:
due to how $\uj^{(n)}$ is defined in (\ref{eqn:35d}) (and due to 
the specifics of how $\uj$ is obtained), the permutation
$\gamma_{\uj^{(n)}(1)} \cdots \gamma_{\uj^{(n)} (k)}$ 
has the same orbit structure as  
$\gamma_{\uj (1)} \cdots \gamma_{\uj (k)}$.  The character 
$\chi$ thus takes the same value on these two permutations.
We come to the conclusion that the right-hand side of
(\ref{eqn:35e}) is actually independent of $n$, and constantly equal 
to $\vvarphi ( \gamma_{\uj (1)} \cdots \gamma_{\uj (k)}$ ). 

Now to the left-hand side of (\ref{eqn:35e}): we observe that the operators 
indicated there can also be described as 
\begin{equation}   \label{eqn:35f}
T_h^{(n)} = \left\{   \begin{array}{ll}
T_h, & \mbox{ if $h \neq h_o$,}   \\
U ( \gamma_n ), & \mbox{ if $h = h_o$.}
\end{array}  \right.
\end{equation}
Thus $T_h^{(n)}$ differs from $T_h$ only on position $h = h_o$,
where we have
\[
\mathrm{WOT}-\lim_{n \to \infty} T_{h_o}^{(n)} 
= \mathrm{WOT}-\lim_{n \to \infty} U ( \gamma_n )
= A_0 = T_{h_o}.
\]
This further implies that
$\mathrm{WOT}-\lim_{n \to \infty}
T_1^{(n)} \cdots  T_k^{(n)} = T_1 \cdots T_k$, and
upon applying the WOT-continuous functional 
$\tr$ to the latter limit we find that 
\[
\lim_{n \to \infty} \tr \bigl(
T_1^{(n)} \cdots  T_k^{(n)} \bigr)  
= \tr ( T_1 \cdots T_k ).
\]
We conclude that making $n \to \infty$
in (\ref{eqn:35e}) leads to the Equation (\ref{eqn:35c}) we were after.
\end{proof}

\vspace{6pt}

\begin{corollary}   \label{cor:36}
$1^o$ For every $k \in \bN$, one has that 
$\tr ( A_0^k ) = \llambda_{+1}$. 

\vspace{3pt}

\noindent
$2^o$ The scalar spectral measure of $A_0$ with respect 
to $\tr$ is equal to $\sum_{i=1}^d w_i \delta_{w_i}$ 
(convex combination of Dirac measures).

\vspace{3pt}

\noindent
$3^o$ The spectrum of $A_0$ is equal to 
$\{ t \in (0,1) \mid \, \exists \, 1 \leq i \leq d
\mbox{ such that } w_i = t \}$.
\end{corollary}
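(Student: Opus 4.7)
The plan is to derive $1^o$ directly from Proposition \ref{prop:35}, read off $2^o$ from the resulting moment sequence, and deduce $3^o$ from $2^o$ via standard spectral theory.

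For $1^o$, the natural move is to apply Proposition \ref{prop:35} with the tuple $\ui \equiv \infty$ on $\{1, \ldots, k\}$, so that every $T_h$ equals $A_0$. The procedure of (\ref{eqn:35b}) then produces a tuple $\uj$ whose $k$ values are pairwise distinct positive integers, and the proposition gives
\[
\tr (A_0^k) = \vvarphi \bigl( \gamma_{\uj (1)} \cdots \gamma_{\uj (k)} \bigr).
\]
Since $\vvarphi$ is a class function, I may relabel $\bN$ and assume $\uj(h) = h$ for $1 \leq h \leq k$. The key combinatorial step is then to identify the cycle structure of the product $\gamma_1 \gamma_2 \cdots \gamma_k = (1,2)(1,3) \cdots (1,k+1)$. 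A short induction on $k$ (or a direct check) shows this product to be the single $(k+1)$-cycle $(1, k+1, k, \ldots, 2)$, so substituting into the defining formula (\ref{eqn:22c}) of $\vvarphi$ yields $\tr(A_0^k) = \symp_{k+1}$.

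For $2^o$, once $1^o$ is established the moments of $A_0$ with respect to $\tr$ are $\symp_{k+1} = \sum_{i=1}^d w_i \cdot w_i^k$, which match exactly the moments of the discrete probability measure $\sum_{i=1}^d w_i \delta_{w_i}$ (the $k=0$ case reads $1 = \symp_1$). Proposition \ref{prop:33} ensures $A_0$ is selfadjoint with $\| A_0 \| \leq 1$, so its scalar spectral measure with respect to $\tr$ is a probability measure supported in $[-1,1]$ and is uniquely determined by its moment sequence; it must therefore equal $\sum_{i=1}^d w_i \delta_{w_i}$.

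Finally, $3^o$ follows because $\tr$ is a faithful trace on $\cM$ (Notation and Remark \ref{def:32}.4), and for a selfadjoint element of a von Neumann algebra equipped with a faithful tracial state the spectrum coincides with the topological support of its scalar spectral measure. The support of the discrete measure obtained in $2^o$ is the finite set $\{w_i : 1 \leq i \leq d\}$, and these values lie in $(0,1)$ because $d \geq 2$ and each $w_i > 0$. The only step that is not a routine application of moment uniqueness or spectral theory is the cycle-structure identification for $\gamma_1 \cdots \gamma_k$, and even that reduces to a brief induction.
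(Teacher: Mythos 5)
Your proposal is correct and follows essentially the same route as the paper: apply Proposition \ref{prop:35} to the all-$\infty$ tuple to reduce $\tr(A_0^k)$ to $\vvarphi(\gamma_1 \cdots \gamma_k)$, identify that product as a $(k+1)$-cycle to get $\symp_{k+1}$, match moments with $\sum_i w_i \delta_{w_i}$ for $2^o$, and invoke faithfulness of $\tr$ to equate the spectrum with the support of the scalar spectral measure for $3^o$. The only cosmetic difference is that you pass through a relabeling argument to reach $\uj(h)=h$, whereas the paper simply chooses that $\uj$ directly in the procedure of (\ref{eqn:35b}); the content is identical.
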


\begin{proof} $1^o$ This is the special case of 
Proposition \ref{prop:35} when the tuple $\ui$ under consideration
has $\ui (h) = \infty$ for all $1 \leq h \leq k$.  Indeed, in this
special case we can pick the tuple $\uj : \{ 1, \ldots, k \} \to \bN$
to be defined by $\uj (h) = h$, $1 \leq h \leq k$, and 
Equation (\ref{eqn:35c}) simply tells us that
$\tr ( A_0^k ) = \vvarphi ( \gamma_1 \cdots \gamma_k )$.
Since $\gamma_1 \cdots \gamma_k = (k+1, k, \ldots , 2,1)$, a cycle of
length $k+1$, it follows that $\tr (A_0^k ) = \llambda_{k+1}$, as claimed.

\vspace{3pt}

\noindent
$2^o$ Let $\mu$ be the scalar spectral measure of $A_0$ with 
respect to $\tr$, and let $\mu ' := \sum_{i=1}^d w_i \delta_{w_i}$.
For every $k \geq 1$ we have
\[
\int_{\bR} t^k d \, \mu' (t) = \sum_{i=1}^d w_i \cdot (w_i)^k
= \sum_{i=1}^d w_i^{k+1} = \llambda_{k+1} = \tr (A_0^k )
= \int_{\bR} t^k \, d \mu (t),
\]
where at the last two equality signs we used the result of $1^o$ above
and the definition of a scalar spectral measure.  Thus $\mu$ and $\mu'$ 
are two compactly supported probability measures which have the same 
moments, and this implies that $\mu = \mu'$.

\vspace{3pt}

\noindent
$3^o$  Since $\tr$ is faithful, the spectrum of $A_0$ is known (see 
e.g.~\cite[Proposition 3.15]{NiSp2006}) to be equal to the support 
$\mathrm{supp} (\mu)$, where $\mu$ is the scalar spectral measure from
the proof of part $2^o$.  But from $2^o$ it is clear that 
$\mathrm{supp} ( \mu ) = 
\{ t \in (0,1) \mid \, \exists \, 1 \leq i \leq d$ such that $w_i = t \}$.
\end{proof}

$\ $

\section{The theorem of CLT type for the sequence of operators
$U( \gamma_n ) - A_0$}

Now that we identified in Proposition \ref{prop:34}
the law of large numbers for the sequence of represented 
star-generators $U( \gamma_n )$, we go to the next step 
usually taken towards a theorem of CLT type: we subtract the
limit $A_0$ out of the $U( \gamma_n )$'s, and we seek to find 
a limit in law for the rescaled averages
\[ 
\frac{1}{\sqrt{n}} \sum_{i=1}^n (U(\gamma_i) - A_0) \text{.} 
\]
This is covered by \cite[Theorem 9.4]{Ko2010}, which applies to sequences
with certain weaker probabilistic symmetries. For the reader's convenience,
we will give a self-contained presentation, tailored to the specific 
instance of CLT that we want to address.

$\ $

\subsection{Review of the CLT for exchangeable sequences.}

\begin{notation}   \label{def:41}
{\em (Review of some basic notation about set-partitions.)}
Let $k$ be in $\bN$.

\vspace{3pt}

\noindent
$1^o$ $\cP (k)$ will denote the set of all partitions of 
$\{ 1, \ldots , k \}$.  A partition $\pi \in \cP (k)$ is thus 
of the form $\pi = \{ V_1, \ldots , V_{\ell} \}$ where 
$V_1, \ldots , V_{\ell}$ (the {\em blocks} of $\pi$) are non-empty
pairwise disjoint sets with 
$V_1 \cup \cdots \cup V_{\ell} = \{ 1, \ldots , k \}$.

\vspace{3pt}

\noindent
$2^o$ On $\cP (k)$ we consider the partial order given by
{\em reverse refinement}: for $\pi, \rho \in \cP(k)$ we write  
``$\pi \leq \rho$'' to mean that every block of $\pi$
is contained in some block of $\rho$.  

\vspace{3pt}

\noindent
$3^o$ We denote $\cP_2 (k) := \{  \pi \in \cP (k) 
\mid \mbox{every block $V \in \pi$ has $|V| = 2$} \}$ 
(where $\cP_2 (k) = \emptyset$ if $k$ is odd).  The elements
of $\cP_2 (k)$ are called {\em pair-partitions}.

\vspace{3pt}

\noindent
$4^o$ We will treat a tuple $\ui \in \bN^k$ as a function 
$\ui : \{ 1, \ldots , k \} \to \bN$.  One defines the 
{\em kernel} of such a tuple as the partition 
$\Ker ( \pi ) \in \cP (k)$ defined as follows: two
numbers $p,q \in \{ 1, \ldots , k \}$ belong 
to the same block of $\Ker (\ui)$ if and only if 
$\ui(p) = \ui(q)$.
\end{notation}

\vspace{6pt}

\begin{notation-and-remark}   \label{def:42}
{\em (Exchangeable sequences.)}
Let $( \cA , \varphi )$ be a $*$-probability space and let 
$\ans$ be a sequence of selfadjoint elements of $\cA$.
Quantities of the form 
\[
\varphi (a_{\ui (1)} \cdots a_{\ui (k)} ), 
\mbox{ with $k \in \bN$ and $\ui  : \{ 1, \ldots , k \} \to \bN$}
\]
go under the name of {\em joint moments} of $\ans$.  We say that
$\ans$ is {\em exchangeable} to mean that its joint 
moments are invariant under the natural action of $S_{\infty}$,  
that is:
\begin{equation}   \label{eqn:42a}
\left\{   \begin{array}{c}
\varphi (a_{\ui (1)} \cdots a_{ \ui (k)} ) 
         = \varphi (a_{\uj (1)} \cdots a_{\uj (k)} )      \\
\mbox{for every $k \in \bN$ and 
      $\ui , \uj  : \{ 1, \ldots , k \} \to \bN$}         \\
\mbox{for which $\exists \, \sigma \in S_{\infty}$
      such that $\uj  = \sigma \circ \ui$.}
\end{array}   \right.
\end{equation}
It is easily seen that for two tuples
$\ui,\uj : \{ 1, \ldots , k \} \to \bN$, the existence of 
a $\sigma \in S_{\infty}$ such that 
$\uj = \sigma \circ \ui$ is equivalent to the fact that
$\Ker (\ui) = \Ker (\uj)$.  Thus the condition 
(\ref{eqn:42a}) could be equivalently written as
\begin{equation}   \label{eqn:42b}
\left\{   \begin{array}{c}
\varphi (a_{\ui (1)} \cdots a_{ \ui (k)} ) 
         = \varphi (a_{\uj (1)} \cdots a_{\uj (k)} )      \\
\mbox{for every $k \in \bN$ and $\ui , \uj  \in \bN^k $
      such that $\Ker ( \ui ) = \Ker ( \uj )$.}
\end{array}   \right.
\end{equation}
\end{notation-and-remark}

\vspace{6pt}

\begin{definition}   \label{def:43}
Let $( \cA , \varphi )$ be a $*$-probability space and let 
$\ans$ be an exchangeable sequence of selfadjoint elements 
of $\cA$.  

\vspace{6pt}

\noindent
$1^o$ The {\em function on partitions} associated to $\ans$ is
$\bt : \sqcup_{k=1}^{\infty} \cP (k) \to \bC$ defined as follows:
for every $k \in \bN$ and $\pi \in \cP (k)$ we put 
\begin{equation}   \label{eqn:43a}
\left\{  \begin{array}{l}
\bt ( \pi ) := \varphi \bigl(
a_{\ui (1)} \cdots a_{\ui (k)}  \bigr),
\mbox{ where $\ui \in \bN^k$ is}             \\
\mbox{ any $k$-tuple such that $\ker ( \ui ) = \pi$. }
\end{array}    \right.
\end{equation}
This formula is unambiguous due to (\ref{eqn:42b}) above.

\vspace{3pt}

\noindent
$2^o$ The sequence $\ans$ is said to have the
{\em singleton vanishing property} when the function $\bt$ 
defined in (\ref{eqn:43a}) satisfies:
\begin{equation}   \label{eqn:43b}
\left\{  \begin{array}{l}
\bt ( \pi ) = 0 \ \mbox{ whenever the partition 
$\pi \in \sqcup_{k=1}^{\infty} \cP (k)$ }                \\
\mbox{ has at least one block $V$ with $|V| = 1$.}
\end{array}  \right.
\end{equation}
Note that the singleton vanishing property guarantees centering: 
in the special case when $\pi$ is the unique partition in $\cP (1)$, 
(\ref{eqn:43b}) says that $\varphi (a_n) = 0$ for every $n \in \bN$.
\end{definition}

\vspace{6pt}

\begin{theorem}   \label{thm:44}
(CLT for an exchangeable sequence, following \cite{BoSp1996}.)

\noindent
Let $( \cA , \varphi )$ be a $*$-probability space and let
$\ans$ be a sequence of selfadjoint elements of $\cA$ which 
is exchangeable and has the singleton vanishing property.
Let $\bt : \sqcup_{k=1}^{\infty} \cP (k) \to \bC$ be the 
function on partitions associated to $\ans$ in Definition
\ref{def:43}.  Consider the linear functional
$\mu : \bC [X] \to \bC$ defined via the prescription that
$\mu (1) = 1$ and that 
\begin{equation}   \label{eqn:44a}
\mu ( X^k ) =
\sum_{ \rho \in \cP_2 (k)} \ \bt ( \rho ),
\ \ \forall \, k \in \bN ,
\end{equation}
with the empty sum on the right-hand side of (\ref{eqn:44a})
being read as $0$ for $k$ odd.  Then:

\vspace{6pt}

\noindent
$1^o$ $\mu$ is positive (that is, 
$\mu ( \, P \cdot \overline{P} \, ) \geq 0$ for every 
$P \in \bC [X]$).

\vspace{3pt}

\noindent
$2^o$ For every $n \in \bN$ put
\begin{equation}   \label{eqn:44b}
s_n := \frac{1}{\sqrt{n}} 
\bigl( a_1 + \cdots + a_n \bigr) \in \cA .
\end{equation}
Then $( s_n )_{n=1}^{\infty}$ converges in 
moments to $\mu$, that is, one has 
$\lim_{n \to \infty} \varphi (s_n^k) 
= \mu (X^k)$ for every $k \in \bN$.
\hfill $\square$
\end{theorem}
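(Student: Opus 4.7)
I would prove this by a direct moment computation, in the spirit of classical CLT proofs via the method of moments, relying crucially on the exchangeability to collapse the relevant sum into a sum over partitions, and on the singleton vanishing property to discard low-multiplicity terms.

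Concretely, for every $k \in \bN$, I expand
\[
\varphi ( s_n^k ) = n^{-k/2} \sum_{ \ui \in \{ 1, \ldots , n \}^k }
\varphi \bigl( a_{\ui (1)} \cdots a_{\ui (k)} \bigr),
\]
and I group the tuples $\ui$ according to their kernel $\ker ( \ui ) \in \cP (k)$. By exchangeability (in the reformulation given in Equation (\ref{eqn:42b})), the inner value only depends on $\ker ( \ui )$ and equals $\bt ( \ker ( \ui ) )$. Counting how many $k$-tuples with values in $\{ 1, \ldots , n \}$ have a prescribed kernel $\pi$ with $|\pi|$ blocks yields the falling factorial $n ( n-1) \cdots (n - | \pi | +1 )$, so
\[
\varphi ( s_n^k ) = \sum_{ \pi \in \cP (k)}
\frac{ n ( n-1) \cdots (n - | \pi | +1 )}{n^{k/2}}
\, \bt ( \pi ).
\]

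Now I would invoke the singleton vanishing property to throw away all partitions $\pi$ with at least one block of size $1$; among the surviving $\pi$, every block has size $\geq 2$, forcing $| \pi | \leq k/2$, with equality if and only if $\pi \in \cP_2 (k)$. The coefficient $n ( n-1) \cdots (n - | \pi | +1 ) / n^{k/2}$ tends to $1$ when $| \pi | = k/2$ and to $0$ when $| \pi | < k/2$ (which in particular handles odd $k$, for which $\cP_2 (k) = \emptyset$ and every surviving $\pi$ has $| \pi | \leq (k-1)/2$). Taking $n \to \infty$ therefore gives precisely $\lim_n \varphi ( s_n^k ) = \sum_{\rho \in \cP_2 (k)} \bt ( \rho ) = \mu ( X^k )$, proving part $2^o$.

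For part $1^o$, positivity of $\mu$ is automatic from the moment convergence just established, once one notices that each $s_n$ is selfadjoint and that $\varphi$ is a state. For any polynomial $P \in \bC [X]$, the selfadjointness of $s_n$ gives $P( s_n )^{*} = \overline{P} ( s_n )$ (where $\overline{P}$ denotes the polynomial with conjugated coefficients), hence
\[
\mu \bigl( P \cdot \overline{P} \bigr) =
\lim_{n \to \infty} \varphi \bigl( P( s_n ) \, P( s_n )^{*} \bigr) \geq 0.
\]
I expect the only truly delicate step to be the asymptotic counting argument in the middle paragraph — specifically, making sure that partitions with one or more singletons are killed by the hypothesis (rather than having to be shown separately to produce vanishing coefficients), which is exactly why the singleton vanishing property is built into the statement. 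Everything else is just careful bookkeeping.
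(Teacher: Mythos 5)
Your proof is correct, and I should note first that the paper does not actually supply a proof of this theorem: the $\square$ following the statement signals that the result is quoted from the cited reference of Bo\.zejko and Speicher rather than re-derived in the text. That said, your argument is exactly the standard moment-method proof that one would expect to underlie the cited result, so it is worth a brief sanity-check rather than a comparison with a (nonexistent) in-paper proof.

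For part $2^o$, your bookkeeping is sound. Expanding $\varphi(s_n^k)$ over $\ui \in \{1,\dots,n\}^k$, replacing each term by $\bt(\ker \ui)$ via exchangeability (Equation (\ref{eqn:42b})), and counting the tuples with prescribed kernel $\pi$ as the falling factorial $n(n-1)\cdots(n-|\pi|+1)$ is all correct. The singleton vanishing property then removes every $\pi$ with a size-$1$ block; for the survivors every block has size $\geq 2$, so $|\pi| \leq k/2$ with equality iff $\pi \in \cP_2(k)$. Since the coefficient $n(n-1)\cdots(n-|\pi|+1)/n^{k/2}$ behaves like $n^{|\pi|-k/2}$, it tends to $1$ when $|\pi|=k/2$ and to $0$ otherwise, and the odd-$k$ case is automatically handled because then $|\pi| \leq (k-1)/2 < k/2$ for all survivors. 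This yields $\lim_n \varphi(s_n^k) = \sum_{\rho \in \cP_2(k)} \bt(\rho) = \mu(X^k)$, as required.

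For part $1^o$, your positivity argument is also correct, and it is the efficient way to do it: for a selfadjoint element $s_n$, $P(s_n)^* = \overline{P}(s_n)$, so $\varphi\bigl((P\overline{P})(s_n)\bigr) = \varphi\bigl(P(s_n) P(s_n)^*\bigr) \geq 0$, and by linearity the moment convergence proved in part $2^o$ gives $\mu(P\overline{P}) = \lim_n \varphi\bigl((P\overline{P})(s_n)\bigr) \geq 0$. The only implicit step worth making explicit is that convergence of all monomial moments passes to arbitrary fixed polynomials by linearity (the coefficients of $P\overline{P}$ do not depend on $n$), but that is immediate. No gaps.
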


$\ $

\subsection{Back to our framework.}

$\ $

\begin{notation-and-remark}    \label{def:45}
We now return to the framework of the $W^{*}$-probability space 
$( \cM , \tr )$ from Section 3.1.  We use the same notation as
there, and we put:
\begin{equation}   \label{eqn:45a}
U_n := U(\gamma_n) \ \mbox{ and } 
\ \diamondU_n := U_n - A_0, \ \ n \in \bN .
\end{equation}
Our plan is to prove that
$\bigl( \diamondU_n \bigr)_{n=1}^{\infty}$
is exchangeable and has the singleton vanishing property, and 
then invoke the general Theorem \ref{thm:44} reviewed above.  

As explained in Section 2.2, the notation ``$\diamondU_n$'' goes 
in the same spirit as the commonly used notation
$\overset{\circ}{U}_n := U_n - \tr (U_n) \oneM 
= U_n - \llambda_2 \oneM$.
The element $\diamondU_n$ has in particular the centering property:
$\tr ( \diamondU_n ) = \tr ( U_n ) - \tr (A_0)
= \llambda_2 -\llambda_2 = 0$.
However, unless we are in the special case that our weights are 
$w_1 = \cdots = w_d = 1/d$, one has 
$\diamondU_n \neq \overset{\circ}{U}_n$.  This distinction is 
important, as the development shown below (more precisely, the 
verification of the singleton vanishing property) would not work
in connection to the elements $\overset{\circ}{U}_n$.

\vspace{3pt}

In order to verify the desired properties of
$\bigl( \diamondU_n \bigr)_{n=1}^{\infty}$,
it is convenient that we first introduce another bit of terminology 
concerning set-partitions.
\end{notation-and-remark}

\vspace{3pt}

\begin{notation-and-remark}    \label{def:46}
Let $k$ be a positive integer.

\vspace{3pt}

\noindent
$1^o$ It is customary that for 
$\pi , \rho \in \cP (k)$ one denotes
\[
\pi \wedge \rho := \{ V \cap W \mid V \in \pi, 
\, W \in \rho, \, V \cap W \neq \emptyset \}.
\]
It is immediate that $\pi \wedge \rho$ (called
the {\em meet} of $\pi$ and $\rho$) belongs to 
$\cP (k)$ and is the largest common lower bound
of $\pi$ and $\rho$ with respect to the 
partial order by reverse refinement.

\vspace{3pt}

\noindent
$2^o$ For every subset $S \subseteq \{ 1, \ldots , k \}$ we will
denote by $\pi_{ { }_S }$ the partition in $\cP(k)$ which 
has a block equal to $\{ 1, \ldots , k \} \setminus S$ 
and has a singleton block at $h$ for every $h \in S$.

\vspace{3pt}

\noindent
$3^o$ We will be interested in meets of the form 
$\pi \wedge \pi_{ { }_S }$ for $\pi \in \cP (k)$ and 
$S \subseteq \{ 1, \ldots , k \}$.  We note that the blocks
of $\pi \wedge \pi_{ { }_S }$ are described as follows:

-- every $h \in S$ becomes a singleton block in
$\pi \wedge \pi_{ { }_S }$;

-- for every $V \in \pi$ which is not contained in $S$, 
we have that $V \setminus S$ is a block of
$\pi \wedge \pi_{ { }_S }$.
\end{notation-and-remark}

\vspace{6pt}

\begin{proposition}   \label{prop:47}
$1^o$ The sequence $(U_n)_{n=1}^{\infty}$ is 
exchangeable in $( \cM , \tr )$.  

\vspace{3pt}

\noindent
$2^o$ The sequence $( \diamondU_n )_{n=1}^{\infty}$ is 
exchangeable in $( \cM , \tr )$.  

\vspace{3pt}

\noindent
$3^o$ Let $\bu$ be the function on partitions associated 
to the $U_n$'s and let $\bt$ be the function on partitions 
associated to the $\diamondU_n$'s.  Then for every 
$k \in \bN$ and $\pi \in \cP (k)$ we have
\begin{equation}   \label{eqn:47a}
\bt ( \pi ) = \sum_{S \subseteq \{ 1, \ldots , k \}}
(-1)^{|S|} \bu ( \pi \wedge \pi_{ { }_S } ).
\end{equation}
\end{proposition}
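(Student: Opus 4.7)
The plan is to prove part $1^o$ directly using that $\chi$ is a class function on $S_\infty$, then prove part $3^o$ by expanding each $\diamondU_n = U_n - A_0$ and invoking Proposition \ref{prop:35}; part $2^o$ will then follow as a corollary of the formula in $3^o$.

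For part $1^o$, we need to show $\tr \bigl( U_{\ui(1)} \cdots U_{\ui(k)} \bigr) = \tr \bigl( U_{\uj(1)} \cdots U_{\uj(k)} \bigr)$ whenever $\Ker(\ui) = \Ker(\uj)$. Both sides equal $\chi$ applied to products of star-transpositions, by the analogue of (\ref{eqn:35c}) with no $A_0$'s present. The key observation is that, since $\Ker(\ui) = \Ker(\uj)$, we can pick $\tau \in S_\infty$ with $\tau(1) = 1$ and $\tau(\ui(h)+1) = \uj(h)+1$ for $1 \leq h \leq k$. Conjugation then gives $\tau \gamma_{\ui(h)} \tau^{-1} = \gamma_{\uj(h)}$ for each $h$, hence $\gamma_{\uj(1)} \cdots \gamma_{\uj(k)} = \tau \, \gamma_{\ui(1)} \cdots \gamma_{\ui(k)} \, \tau^{-1}$, and the class-function property of $\chi$ finishes the argument.

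For part $3^o$, I fix $k \in \bN$, $\pi \in \cP(k)$, and a tuple $\ui \in \bN^k$ with $\Ker(\ui) = \pi$. Expanding the product,
\begin{equation*}
\diamondU_{\ui(1)} \cdots \diamondU_{\ui(k)} = \sum_{S \subseteq \{1, \ldots, k\}} (-1)^{|S|} \, T_1^{(S)} \cdots T_k^{(S)},
\end{equation*}
where $T_h^{(S)} = A_0$ if $h \in S$ and $T_h^{(S)} = U_{\ui(h)}$ otherwise. Now I apply Proposition \ref{prop:35}: each $\tr \bigl( T_1^{(S)} \cdots T_k^{(S)} \bigr)$ equals $\chi \bigl( \gamma_{\uj(1)} \cdots \gamma_{\uj(k)} \bigr)$ for a tuple $\uj$ obtained from $\ui$ by replacing, for $h \in S$, the value $\ui(h)$ with a fresh distinct number not occurring elsewhere. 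The main point to check — and the one place where care is needed — is that the kernel of such $\uj$ is exactly $\pi \wedge \pi_{ { }_S }$: the $h \in S$ become singletons (because of the fresh labels), while for $V \in \pi$ not contained in $S$ the elements of $V \setminus S$ still share the common value $\ui(h)$. This matches the description of blocks of $\pi \wedge \pi_{ { }_S}$ from Notation and Remark \ref{def:46}.3. Hence by part $1^o$ we get $\tr \bigl( T_1^{(S)} \cdots T_k^{(S)} \bigr) = \bu(\pi \wedge \pi_{ { }_S})$, and applying $\tr$ to the expanded product yields (\ref{eqn:47a}).

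Since the right-hand side of (\ref{eqn:47a}) depends only on $\pi = \Ker(\ui)$ and not on the tuple $\ui$ itself, part $2^o$ follows at once: $\tr \bigl( \diamondU_{\ui(1)} \cdots \diamondU_{\ui(k)} \bigr)$ depends only on $\Ker(\ui)$, which is the equivalent form (\ref{eqn:42b}) of exchangeability. The only delicate step in the whole argument is the identification of $\Ker(\uj)$ with $\pi \wedge \pi_{ { }_S}$, which is a purely combinatorial verification against the description in Notation and Remark \ref{def:46}.3; no further obstacle is anticipated.
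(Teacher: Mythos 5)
Your proof is correct and follows essentially the same route as the paper: expand $\prod_h(U_{\ui(h)}-A_0)$ by inclusion-exclusion, evaluate each term via Proposition \ref{prop:35}, and check that the resulting tuple $\uj$ has kernel $\pi\wedge\pi_{{ }_S}$. The only cosmetic difference is that for part $1^o$ you conjugate at the group level and invoke the class-function property of $\chi$, whereas the paper conjugates by $U(\sigma)$ inside $\cM$ and uses the trace property of $\tr$; these are logically the same, and you also spell out the kernel identification that the paper leaves to the reader.
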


\begin{proof} $1^o$ Let $\ui, \uj : \{ 1, \ldots , k \} \to \bN$
be tuples such that $\Ker ( \ui ) = \Ker (\uj )$.  It is easily seen 
that there exists $\sigma \in S_{\infty}$ such that 
$\gamma_{\uj (h)} = \sigma^{-1} \gamma_{\ui (h)} \sigma$
for every $1 \leq h \leq k$.  Upon applying the representation 
$U$ we therefore find that 
$U_{\uj (1)} \cdots U_{\uj (k)} = U( \sigma )^{-1} \cdot 
\bigl( U_{\ui (1)} \cdots U_{\ui (k)} \bigr) \cdot U ( \sigma )$,
which in turn implies that 
$\tr \bigl( U_{\uj (1)} \cdots U_{\uj (k)} \bigr) = 
\tr \bigl( U_{\ui (1)} \cdots U_{\ui (k)} \bigr)$.

\vspace{3pt}

\noindent
$2^o, \, 3^o$ We will prove that for every $k \geq 1$
and $\ui : \{ 1, \ldots , k \} \to \bN$ we have 
\begin{equation}   \label{eqn:47b}
\tr \bigl( \diamondU_{\ui (1)} \cdots \diamondU_{\ui (k)} \bigr)
= \sum_{S \subseteq \{ 1, \ldots , k \}}
(-1)^{|S|} \, \bu ( \pi \wedge \pi_{ { }_S } ),
\ \mbox{ where $\pi = \Ker ( \ui ) \in \cP (k)$.}
\end{equation}
Since the right-hand side of (\ref{eqn:47b}) only depends on
$\Ker ( \ui )$, this equation will imply that 
$( \diamondU_n )_{n=1}^{\infty}$ is exchangeable, and will also 
provide a formula connecting the functions $\bt$ and $\bu$, which
is precisely what is stated in (\ref{eqn:47a}).

Let us then fix a tuple $\ui : \{ 1, \ldots , k \} \to \bN$,
for which we verify that (\ref{eqn:47b}) holds. We have
\[
\tr \bigl( \diamondU_{\ui (1)} \cdots \diamondU_{\ui (k)} \bigr) = 
\tr \bigl( \, (U_{\ui (1)} - A_0) \cdots (U_{\ui (k)} - A_0) \, \bigr)
\]
\begin{equation}   \label{eqn:47c}
= \sum_{S \subseteq \{ 1, \ldots , k \}} (-1)^{|S|} \,
\tr \bigl( T_1^{(S)} \cdots T_k^{(S)} \bigr),
\end{equation}
where for every $S \subseteq \{ 1, \ldots , k \}$ and $1 \leq h \leq k$
we put
\[
T_h^{(S)} = \left\{   \begin{array}{ll}
A_0, & \mbox{ if $h \in S$,}   \\
U_{\ui (h)}, & \mbox{ if $h \in \{1 , \ldots , k \} \setminus S$.}
\end{array}  \right.
\]

Now, for every $S \subseteq \{ 1, \ldots , k \}$, the trace
$\tr \bigl( T_1^{(S)} \cdots T_k^{(S)} \bigr)$ can be evaluated 
by using Proposition \ref{prop:35}.  It is actually an easy 
verification, left to the reader, that the recipe from 
Proposition  \ref{prop:35} expresses the desired trace in terms 
of the function $\bu$, in the form
\begin{equation}   \label{eqn:47d}
\tr \bigl( T_1^{(S)} \cdots T_k^{(S)} \bigr) = 
\bu ( \pi \wedge \pi_{ { }_S } ), 
\ \ \mbox{ where $\pi := \Ker ( \ui )$.}
\end{equation}
The required formula (\ref{eqn:47b}) follows when we substitute 
(\ref{eqn:47d}) into (\ref{eqn:47c}).
\end{proof}

\vspace{6pt}

\begin{proposition}  \label{prop:48}
The sequence $( \diamondU_n )_{n=1}^{\infty}$ has the 
singleton vanishing property.
\end{proposition}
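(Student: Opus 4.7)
The plan is to use the explicit formula for $\bt$ in terms of $\bu$ provided by Proposition \ref{prop:47}.3, namely
\[
\bt(\pi) = \sum_{S \subseteq \{1,\ldots,k\}} (-1)^{|S|} \, \bu(\pi \wedge \pi_{ { }_S}),
\]
and a sign-reversing involution argument on the indexing set of subsets $S$.

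Suppose $\pi \in \cP(k)$ has a singleton block $\{h_0\}$, for some $h_0 \in \{1, \ldots, k\}$. I would pair each $S \subseteq \{1, \ldots, k\}$ with its symmetric difference $S \triangle \{h_0\}$; this is a fixed-point-free involution on the power set of $\{1, \ldots, k\}$, and the paired subsets have opposite parities, so their signs $(-1)^{|S|}$ are opposite. The crux of the proof is therefore to check that
\[
\pi \wedge \pi_{ { }_S} = \pi \wedge \pi_{ { }_{S \triangle \{h_0\}}}
\]
for every such $S$, whence $\bu$ takes equal values at the two partitions and the paired terms cancel in the sum, giving $\bt(\pi) = 0$.

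To verify this equality of meets, I would use the block description recorded in Notation \ref{def:46}.3. Assume without loss of generality that $h_0 \notin S$ (the other case is obtained by swapping $S$ and $S \triangle \{h_0\}$). On the left side $\pi \wedge \pi_{ { }_S}$: each $h \in S$ is a singleton; the singleton block $\{h_0\} \in \pi$ is not contained in $S$, so it contributes the singleton $\{h_0\} \setminus S = \{h_0\}$; and each other block $V \in \pi$ contributes $V \setminus S$. On the right side $\pi \wedge \pi_{ { }_{S \cup \{h_0\}}}$: each $h \in S \cup \{h_0\}$ is a singleton (in particular $\{h_0\}$ appears as such); and for any block $V \in \pi$ other than $\{h_0\}$, we have $h_0 \notin V$, so $V \setminus (S \cup \{h_0\}) = V \setminus S$. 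Both partitions therefore have exactly the same blocks, which is what we needed.

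This is a purely combinatorial argument on the formula from Proposition \ref{prop:47}.3; no further analytic input is required, and there is no real obstacle — the only thing to be careful about is the bookkeeping of blocks under the meet operation in the case that $V = \{h_0\}$ itself, which is what makes the equality of meets work precisely when $\{h_0\}$ is already a singleton of $\pi$.
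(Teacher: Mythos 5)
Your proof is correct and takes essentially the same approach as the paper: the paper's map $\Phi(S)$ is exactly your $S \triangle \{h_0\}$ (with the paper writing $p$ for your $h_0$), and the paper likewise establishes the key identity $\pi \wedge \pi_{ { }_S} = \pi \wedge \pi_{ { }_{\Phi(S)}}$ and then cancels paired terms by the sign flip $(-1)^{|S|} + (-1)^{|\Phi(S)|} = 0$. You merely spell out the block-by-block verification of the meet identity a bit more explicitly than the paper, which leaves it as an "elementary, but important" observation.
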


\begin{proof}  Fix a partition $\pi \in \cP (k)$ which is known
to have a singleton block at $p$, for some $1 \leq p \leq k$.  
Our goal for the proof is to verify that $\bt ( \pi ) = 0$, where, 
same as in the preceding proposition, $\bt$ denotes the function
on partitions associated to $( \diamondU_n )_{n=1}^{\infty}$.

Consider the self-map $\Phi$ of the power set of 
$\{ 1, \ldots , k \}$ which is defined by putting
\begin{equation}   \label{eqn:48a}
\Phi (S) = \left\{ \begin{array}{ll}
S \setminus \{ p \}, &  \mbox{ if $p \in S$,}  \\
S \cup \{ p \}, &  \mbox{ if $p \not\in S$}
\end{array}   \right\}, \ \ \mbox{ for }
S \subseteq \{ 1, \ldots , k \}.
\end{equation}
It is clear that $\Phi$ is a bijection from 
$2^{\{ 1, \ldots , k \}}$ onto itself.  
Another (elementary, but important) detail concerning $\Phi$ is that,
due to our hypothesis that $\{ p \}$ is a block of $\pi$, one has:
\begin{equation}    \label{eqn:48b}
\pi \wedge \pi_{ { }_S } = \pi \wedge \pi_{ { }_{\Phi (S)} },
\ \ \forall \, S \subseteq \{ 1, \ldots , k \}.
\end{equation}

We can then start from the formula for $\bt (\pi )$ provided 
by Proposition \ref{prop:47}, and write:
\begin{align*}
\bt ( \pi ) 
& = \sum_{S \subseteq \{ 1, \ldots , k \}}
(-1)^{|S|} \bu ( \pi \wedge \pi_{ { }_S } )   \\
& = \frac{1}{2} \, \Bigl( 
\sum_{S \subseteq \{ 1, \ldots , k \}}
(-1)^{|S|} \bu ( \pi \wedge \pi_{ { }_S } )
+ \sum_{S \subseteq \{ 1, \ldots , k \}}
(-1)^{|\Phi(S)|} \bu ( \pi \wedge \pi_{ { }_{\Phi (S)} }  ) \Bigr)
\end{align*}

\begin{equation}   \label{eqn:48c}
= \frac{1}{2} \, \Bigl( 
\sum_{S \subseteq \{ 1, \ldots , k \}}
\bigl( (-1)^{|S|} + (-1)^{| \Phi (S) |} \bigr) \, 
\bu ( \pi \wedge \pi_{ { }_S } ) \Bigr),
\end{equation}
where at the second equality sign we used the fact that 
$\Phi$ is bijective, and at the third equality sign 
we invoked (\ref{eqn:48b}).

Finally, it is clear that for every $S \subseteq \{ 1, \ldots , k \}$
we have $| \Phi (S) | = |S| \pm 1$, and hence
$(-1)^{|S|} + (-1)^{| \Phi (S) |} = 0$.  This shows that all the terms
of the sum indicated in (\ref{eqn:48c}) are equal to $0$, and leads to 
the required conclusion that $\bt ( \pi ) = 0$.
\end{proof}

\vspace{6pt}

Since the sequence $( \diamondU_n )_{n=1}^{\infty}$ was now 
found to satisfy the hypotheses of Theorem \ref{thm:44}, we 
can therefore invoke this theorem and draw the following
conclusion.

\vspace{6pt}

\begin{corollary}   \label{cor:49}
{\em (CLT  for the sequence of elements $\diamondU_n$.)}
Consider the elements
\begin{equation}   \label{eqn:49a}
s_n := \frac{1}{\sqrt{n}} \, \sum_{i=1}^n \diamondU_i
\ = \ \frac{1}{\sqrt{n}} \, \sum_{i=1}^n 
(U ( \gamma_i) - A_0) \in \cM,
\ \ n \geq 1.
\end{equation}
On the other hand, let $\bt$ be the function on partitions 
associated to $( \diamondU_n )_{n=1}^{\infty}$ and consider the
linear functional $\mu : \bC [X] \to \bC$ defined via the 
prescription that $\mu (1) = 1$ and that 
\begin{equation}   \label{eqn:49b}
\mu ( X^k ) =
\sum_{ \rho \in \cP_2 (k)} \ \bt ( \rho ),
\ \ \forall \, k \in \bN ,
\end{equation}
with the empty sum on the right-hand side of 
(\ref{eqn:49b}) being read as $0$ for $k$ odd.  
Then $\mu$ is a positive functional and the $s_n$'s 
from (\ref{eqn:49a}) converge to $\mu$ in moments.
\hfill $\square$
\end{corollary}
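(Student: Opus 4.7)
The plan is straightforward: this corollary is essentially a direct invocation of the general exchangeable-CLT result recorded as Theorem \ref{thm:44}, once all its hypotheses have been verified for the specific sequence $( \diamondU_n )_{n=1}^{\infty}$.

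First I would check that the sequence $( \diamondU_n )_{n=1}^{\infty}$ in $( \cM, \tr )$ fits the setup of Theorem \ref{thm:44}. The elements $\diamondU_n = U( \gamma_n ) - A_0$ are selfadjoint, since $U( \gamma_n )$ is a symmetry (noted at the end of Notation \ref{def:23}) and $A_0 = A_0^{*}$ by Proposition \ref{prop:33}. Exchangeability of the sequence is exactly the content of Proposition \ref{prop:47}.2, and the singleton vanishing property is the content of Proposition \ref{prop:48}. The function on partitions $\bt$ associated to $( \diamondU_n )_{n=1}^{\infty}$ in the sense of Definition \ref{def:43} is precisely the $\bt$ appearing in the statement of the corollary, and the expression for $\mu$ in Equation (\ref{eqn:49b}) matches the defining formula (\ref{eqn:44a}) of Theorem \ref{thm:44}.

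With all hypotheses verified, I would simply apply Theorem \ref{thm:44}. Part 1 of that theorem yields the positivity of $\mu$, while Part 2 gives that the sums
\[
s_n = \frac{1}{\sqrt{n}} \sum_{i=1}^n \diamondU_i \in \cM, \ \ n \geq 1,
\]
converge in moments to $\mu$, i.e.~$\lim_{n \to \infty} \tr ( s_n^k ) = \mu (X^k)$ for every $k \in \bN$. This is exactly what the corollary asserts.

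There is really no obstacle left at this stage: the substantive work has already been done in Propositions \ref{prop:47} and \ref{prop:48}, where the role of the ``non-standard'' centering $U( \gamma_n ) - A_0$ (as opposed to $U( \gamma_n ) - \llambda_2 \oneM$) was crucial to secure the singleton vanishing property. The corollary simply packages those ingredients and feeds them into Theorem \ref{thm:44}.
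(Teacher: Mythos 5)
Your argument coincides with the paper's own treatment: the text immediately preceding Corollary \ref{cor:49} observes that the hypotheses of Theorem \ref{thm:44} have just been verified (selfadjointness, exchangeability via Proposition \ref{prop:47}, singleton vanishing via Proposition \ref{prop:48}), and the corollary is then drawn by direct invocation of that theorem, exactly as you propose. No gap, and no divergence from the paper's route.
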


\vspace{6pt}

\begin{remark}    \label{rem:410}
Quite obviously, the law of the element $s_n$ indicated in 
Equation (\ref{eqn:49a}) is the same as the law $\mu_n$ mentioned
in Theorem \ref{thm:25}.  We would therefore like to claim that
we have now obtained the proof of Theorem \ref{thm:25}, where 
the linear functional $\mu$ indicated in the preceding corollary 
is an algebraic incarnation of the law ``$\muw$'' from 
Theorem \ref{thm:25}.  There is however a difference between the 
$\mathrm{weak}^{*}$-convergence needed for the conclusion of 
Theorem \ref{thm:25}, and the convergence in moments which is provided 
by Corollary \ref{cor:49}.  The well-known argument to be invoked 
in such a situation (see e.g.~\cite[Theorem 30.2]{Bill1995}) is that 
the limit law $\mu$ is uniquely determined by its moments.  This in 
turn is known to follow (cf.~\cite[Theorem 30.1]{Bill1995}) if we 
can provide some bounds on the moments of $\mu$ which ensure that the
series $\sum_{k=0}^{\infty} ( \mu (X^k) / k! ) z^k$ has a positive 
radius of convergence.

In order to get the desired bounds on the moments $\mu (X^k)$, 
let us look back at the functions on partitions $\bu$ and $\bt$ that
were introduced in Proposition \ref{prop:47}.3.  We first observe
that $| \bu ( \pi ) | \leq 1$ for every 
$\pi \in \sqcup_{k=1}^{\infty} \cP (k)$, as is clear from the fact that 
$\bu ( \pi )$ is computed as the value of $\chi$ on a suitable product
of star transpositions.  By plugging this observation into the formula 
(\ref{eqn:47a}) of Proposition \ref{prop:47}.3 we therefore find that
\[
| \bt ( \pi ) | \leq \sum_{S \subseteq \{ 1, \ldots , k \}}
| \bu ( \pi \wedge \pi_{{ }_S} ) | \leq 2^k, \ \ \forall
\, k \in \bN \mbox{ and } \pi \in \cP (k).
\]
The latter inequality can in turn be plugged into Equation 
(\ref{eqn:49b}) of Corollary \ref{cor:49} to conclude that 
for every $k \in \bN$ we have
\begin{equation}   \label{eqn:410a}
| \mu (X^k) | \leq 2^k \cdot | \cP (k) | =
\left\{   \begin{array}{ll}
0, & \mbox{ if $k$ is odd,}  \\
2^k \cdot (k-1)!!, & \mbox{ if $k$ is even.}
\end{array}   \right.
\end{equation}

The bound obtained in (\ref{eqn:410a}) is clearly sufficient 
for our purposes.  Indeed, for $k$ even we have
$\bigl( 2^k \cdot (k-1)!! \bigr) / k! = 2^{k/2} / (k/2)!$,
which shows that the series 
$\sum_{k=0}^{\infty} ( \mu (X^k) / k! ) z^k$ has an
infinite radius of convergence.  Thus Corollary \ref{cor:49},
together with this bound on moments, implies Theorem \ref{thm:25}.
\end{remark}

\vspace{6pt}

\begin{remark}   \label{rem:411}
We mention that the idea of associating a function on 
pair-partitions to an extremal character of $S_{\infty}$
has been considered a while ago in \cite{BoGu2002}.  See 
pages 215-216 (in particular the figure on page 216) 
of \cite{BoGu2002}, where it is explained how that function 
on pair-partitions is computed.
Calculations made on various special examples don't seem 
to suggest, however, a connection between the construction 
from \cite{BoGu2002} and the functions on partitions $\bt$ 
and $\bu$ that appeared in Proposition \ref{prop:47} of the 
present paper.
\end{remark}

$\ $

\section{Moments of the limit law
$\muw$, in terms of permutations $\tau_{\pi}$}

\noindent
In the present section we do some further processing
of the formula (\ref{eqn:49b}) for an even moment of the limit law 
$\muw$.  Our goal, reached in Proposition \ref{prop:56}, is to
describe such a moment in a way which only refers to how the 
character $\vvarphi : S_{\infty} \to \bC$ evaluates products of 
star-transpositions ``$\tau_{\pi}$'' of the kind discussed in 
Section 1.4 of the Introduction.  It turns out to be convenient 
to consider such permutations $\tau_{\pi}$ not only when $\pi$ is
a pair-partition (as mentioned in Section 1.4) but also in the case
when $\pi$ has some singleton blocks.

\vspace{6pt}

\begin{notation}   \label{def:51}
Let $k$ be a positive integer.  We denote by 
$\cP_{\leq 2} (k)$ the set of all
partitions $\pi \in \cP (k)$ such that every
block $V \in \pi$ has cardinality $1$ or $2$ -- 
that is, $V$ is either a ``singleton-block'' or a ``pair''.  
For $\pi \in \cP_{\leq 2} (k)$, the number of 
singleton-blocks in $\pi$ will be denoted as $| \pi |_1$,
and the number of pairs in $\pi$ will be denoted as
$| \pi |_2$.  We thus have 
\[
| \pi |_1 + | \pi |_2 = | \pi |,
\mbox{ the total number of blocks of $\pi$, while}
\]
\[
| \pi |_1 + 2 \, | \pi |_2 = k,
\mbox{ the total number of points partitioned by $\pi$}.
\]
Note that, as a consequence of the latter equation, 
$| \pi |_1$ is sure to be of same parity as $k$. 
\end{notation}

\vspace{6pt}

\begin{definition}    \label{def:52}
{\em (The permutation $\tau_{\pi}$.)}
Let $k \in \bN$ and let $\pi \in \cP_{\leq 2} (k)$.
We construct a permutation $\tau_{\pi} \in S_{\infty}$,
defined as a product of star-transpositions $\gamma_m$, 
as follows.

\vspace{3pt}

{\em Step 1.} Denoting $| \pi | =: \ell$, we list the blocks
$V_1, \ldots , V_{\ell}$ of $\pi$ in decreasing order of 
their maximal elements, and for $1 \leq i \leq \ell$ we put 
\begin{equation}   \label{eqn:52a}
a_i := \min (V_i), \ \ b_i := \max (V_i).
\end{equation}
The convention on the order in which we consider
$V_i$'s reflects on the $b_i$'s, and gives:
\begin{equation}   \label{eqn:52b}
k = b_1 > b_2 > \cdots > b_{\ell}.
\end{equation}

\vspace{3pt}

{\em Step 2.} Let 
$\ur : \{ 1, \ldots , k \} \to \{ 1, \ldots , \ell \}$ be the  
tuple which takes the value $i$ on the block $V_i$, for every 
$1 \leq i \leq \ell$.  In other words, $\ur$ is described by the 
formula 
\begin{equation}   \label{eqn:52c}
\ur( a_i ) = \ur( b_i ) = i, \ \ \forall \, 1 \leq i \leq \ell.
\end{equation}

\vspace{3pt}

{\em Step 3.}  We put
\begin{equation}    \label{eqn:52d}
\tau_{\pi} := \gamma_{\ur(1)} \cdots \gamma_{\ur(k)} 
\in S_{\ell + 1} \subseteq S_{\infty}. 
\end{equation}
The relation $\tau_{\pi} \in S_{\ell + 1}$ holds because
(\ref{eqn:52d}) only uses
$\gamma_1, \gamma_2, \ldots , \gamma_{\ell}$, 
with each of them appearing either once or two times in the
product indicated there; as a consequence, $\tau_{\pi}$ cannot
move any number $n > \ell + 1$. 
\end{definition}

\vspace{6pt}

\begin{example}   \label{example:53}
For a concrete example, suppose that $k=7$ and that 
we are dealing with
\[
\pi = \bigl\{ \{ 1,6 \}, \{ 2,5 \}, \{ 3 \}, \{ 4,7 \} \bigr\}
\in \cP_{\leq 2} (7).
\]
In Step 1 of the preceding definition we thus have $\ell = 4$,
and the order in which we consider the blocks of $\pi$ is
\[
V_1 = \{ 4,7 \}, \, V_2 = \{ 1,6 \}, 
\, V_3 = \{ 2,5 \}, \, V_4 = \{ 3 \}.
\]
For a pictorial rendering of Steps 2 and 3, Figure 2 shows a 
drawing of $\pi$ where $\gamma_1$'s are placed on top of $V_1$, 
$\gamma_2$'s are placed on top of $V_2$, and so on.  The 
resulting product of star-transpositions is, for this example:
$\tau_{\pi} = 
\gamma_2 \gamma_3 \gamma_4 \gamma_1 \gamma_3 \gamma_2 \gamma_1 
= (1,5,4,2) \in S_5$.

\begin{center}
\setlength{\unitlength}{0.7cm}
$\pi = \
\begin{picture}(8,2) \thicklines
   \put(0,-1){\line(0,1){2}}
   \put(0,-1){\line(1,0){5}}
   \put(1,-0.5){\line(0,1){1,5}}
   \put(1,-0.5){\line(1,0){3}}
   \put(2,0){\line(0,1){1}}
   \put(3,0){\line(0,1){1}}
   \put(3,0){\line(1,0){3}}
   \put(4,-0.5){\line(0,1){1.5}}
   \put(5,-1){\line(0,1){2}}
   \put(6,0){\line(0,1){1}}
   \put(-0.3,1.2){$\gamma_2$}
   \put(0.7,1.2){$\gamma_3$}
   \put(1.7,1.2){$\gamma_4$}
   \put(2.7,1.2){$\gamma_1$}
   \put(3.7,1.2){$\gamma_3$}
   \put(4.7,1.2){$\gamma_2$}
   \put(5.7,1.2){$\gamma_1$}
\end{picture}$

\vspace{1cm}

{\bf Figure 2.} {\em A picture of the 
$\pi \in \cP_{\leq 2} (7)$ used in 
Example \ref{example:53}.}
\end{center}
\end{example}

\vspace{6pt}

\begin{remark}    \label{rem:54}
Let $\pi$ be a partition in $\cP_{\leq 2} (k)$. The 
specifics of how we constructed the permutation
$\tau_{\pi}$ will be useful later on in the paper; but 
if we only want to know how the character $\vvarphi$
applies to $\tau_{\pi}$, then let us note that one 
simply has
\begin{equation}   \label{eqn:54a}
\vvarphi ( \tau_{\pi} ) = \bu ( \pi ), 
\end{equation}
where $\bu$ is the function on partitions associated to the
sequence $( U_n )_{n=1}^{\infty}$ (as considered in 
Proposition \ref{prop:47}).  In other words, one can write
\begin{equation}   \label{eqn:54b}
\vvarphi ( \tau_{\pi} )
= \tr ( U_{\ui (1)} \cdots U_{\ui (k)} )
= \vvarphi ( \gamma_{\ui(1)} \cdots \gamma_{\ui(k)} )
\end{equation}
where $\ui : \{ 1, \ldots , k \} \to \bN$ is any tuple
such that $\Ker ( \ui ) = \pi$.  This is because the permutation 
$\gamma_{\ui(1)} \cdots \gamma_{\ui(k)}$ appearing in 
(\ref{eqn:54b}) belongs to the same conjugacy class of 
$S_{\infty}$ as $\tau_{\pi}$, and thus the character $\vvarphi$
must take the same value on them.
\end{remark}

\vspace{6pt}

Now let us fix an even $k \in \bN$ and let us look again at the 
formula $(\ref{eqn:49b})$ obtained in Section 3 for the 
moment of order $k$ of our limit law of interest.  In order to 
move further, let us focus on one of the terms $\bt ( \rho )$ 
appearing on the right-hand side of (\ref{eqn:49b}).  We have at
our disposal a formula, established in Proposition \ref{prop:47}.3,
which expresses $\bt ( \rho )$ as a summation over subsets 
$S \subseteq \{ 1, \ldots , k \}$.  Our next point is that the 
latter summation actually has a lot of cancellations, and can 
be simplified in the way indicated by the next lemma.

\vspace{6pt}

\begin{lemma}  \label{lemma:55}
Let $\bt$ be the function on partitions associated to the sequence
$( \diamondU_n )_{n=1}^{\infty}$ (same as in Section 4.2).  Let $k$ 
be an even positive integer, and let $\rho$ be in $\cP_2 (k)$.  Then
\begin{equation}   \label{eqn:55a}
\bt ( \rho ) 
= \sum_{\substack{\pi \in \cP_{\leq 2}(k)  \\ 
  \pi \leq \rho}} (-1)^{| \pi |_1 /2} \vvarphi ( \tau_{\pi} ),
\end{equation}
where the non-negative even integer $| \pi |_1$ is picked from 
Notation \ref{def:51}.
\end{lemma}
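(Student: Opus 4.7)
The plan is to start from the formula proved in Proposition \ref{prop:47}.3, namely
\[
\bt(\rho) \;=\; \sum_{S \subseteq \{1,\ldots,k\}} (-1)^{|S|}\, \bu(\rho \wedge \pi_{{ }_S}),
\]
and reorganize the right-hand side by grouping subsets $S$ according to the partition $\pi := \rho \wedge \pi_{{ }_S}$ that they produce.

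First I would make the following structural observation about the meets $\rho \wedge \pi_{{ }_S}$, using the description of blocks recorded in Notation \ref{def:46}.3. Because $\rho \in \cP_2(k)$, every block of $\rho$ is a pair $V = \{a,b\}$, and the three cases $S \cap V = V$, $|S \cap V| = 1$, $S \cap V = \emptyset$ produce respectively two singleton blocks $\{a\},\{b\}$, two singleton blocks $\{a\},\{b\}$ again (one coming from $S$ and one coming from $V \setminus S$), or the intact pair $V$. In all three cases, the contribution to $\rho \wedge \pi_{{ }_S}$ is either two singletons at $a$ and $b$ or the pair $V$ itself. Hence $\rho \wedge \pi_{{ }_S} \in \cP_{\leq 2}(k)$ and $\rho \wedge \pi_{{ }_S} \leq \rho$, and conversely every $\pi \in \cP_{\leq 2}(k)$ with $\pi \leq \rho$ arises in this way (for each pair of $\rho$, either both of its elements are singletons in $\pi$, or the pair is intact in $\pi$).

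Next I would compute, for a fixed such $\pi \leq \rho$, the alternating sum $\sum_{S : \rho \wedge \pi_{{ }_S} = \pi} (-1)^{|S|}$. The choice of $S$ factors pair-by-pair over the blocks of $\rho$: if $V \in \rho$ is intact in $\pi$, the only allowed choice is $S \cap V = \emptyset$, contributing $+1$; if $V \in \rho$ is split into two singletons in $\pi$, the allowed choices are $\{a\}, \{b\}, \{a,b\}$, contributing $(-1)+(-1)+1 = -1$. Since the number of pairs of $\rho$ that get split is exactly $|\pi|_1/2$, this product of local contributions equals $(-1)^{|\pi|_1/2}$.

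Putting the two steps together,
\[
\bt(\rho) \;=\; \sum_{\substack{\pi \in \cP_{\leq 2}(k) \\ \pi \leq \rho}} \Bigl( \sum_{S : \rho \wedge \pi_{{ }_S} = \pi} (-1)^{|S|} \Bigr) \bu(\pi) \;=\; \sum_{\substack{\pi \in \cP_{\leq 2}(k) \\ \pi \leq \rho}} (-1)^{|\pi|_1/2}\, \bu(\pi),
\]
and the proof concludes by invoking Remark \ref{rem:54}, which identifies $\bu(\pi)$ with $\vvarphi(\tau_{\pi})$. The only step that requires care is the sign bookkeeping in the second paragraph; the rest is essentially an unpacking of definitions. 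I would expect the local $-1 + -1 + 1 = -1$ computation (which is what ultimately explains the sign $(-1)^{|\pi|_1/2}$ in the statement) to be the one subtle point worth writing out explicitly.
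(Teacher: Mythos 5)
Your proof is correct and follows the same overall strategy as the paper's: start from the formula of Proposition \ref{prop:47}.3, group terms by the value of $\pi = \rho \wedge \pi_{{ }_S}$, and reduce to evaluating the alternating sum $\sum_{S : \rho \wedge \pi_{{ }_S} = \pi} (-1)^{|S|}$. Your pair-by-pair factorization of this inner sum (contribution $+1$ for an intact pair, contribution $(-1)+(-1)+(+1) = -1$ for a split pair) is a slightly cleaner way of arriving at $(-1)^{|\pi|_1/2}$ than the paper's enumeration procedure in Equation (\ref{eqn:55e}) followed by the binomial identity $\sum_{q'=0}^{q} \binom{q}{q'} 2^{q'} (-1)^{q'} = (1-2)^q$; the two computations are of course equivalent, but the multiplicative organization makes the source of the sign more transparent.
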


\begin{proof}
We start from the formula for $\bt ( \rho )$ provided by
Equation (\ref{eqn:47a}) of Proposition \ref{prop:47}, and in that
formula we group terms according to what is $\rho \wedge \pi_{ { }_S }$.
Noting that $\rho \wedge \pi_{ { }_S } \leq \rho$ for all 
$S \subseteq \{ 1, \ldots , k \}$ (which implies in particular that
$\rho \wedge \pi_{ { }_S } \in \cP_{\leq 2} (k)$), we find that
\[
\bt ( \rho ) =
\sum_{ \substack{ \pi \in \cP_{\leq 2} (k),  \\
                  \pi \leq \rho }                  } 
\ \Bigl( \, \sum_{ 
  \substack{ S \subseteq \{ 1, \ldots , k \} \ \mathrm{with}  \\
              \rho \wedge \pi_{ { }_S } = \pi }      } 
\ (-1)^{|S|} \, \Bigr)  \bu ( \pi ).
\]
The observation made in (\ref{eqn:54a}) of 
Remark \ref{rem:54} allows us to re-write this as 
\begin{equation}   \label{eqn:55b}
\bt ( \rho ) =
\sum_{ \substack{ \pi \in \cP_{\leq 2} (k),  \\
                  \pi \leq \rho }                  } 
\ \Bigl( \, \sum_{ 
  \substack{ S \subseteq \{ 1, \ldots , k \} \ \mathrm{with}  \\
              \rho \wedge \pi_{ { }_S } = \pi }      } 
\ (-1)^{|S|} \, \Bigr)  \vvarphi ( \tau_{\pi} ).
\end{equation}
The cancellations we need are
in the inside sum on the right-hand side of (\ref{eqn:55b}).
More precisely, we will prove that for every 
$\pi \in \cP_{\leq 2} (k)$ such that $\pi \leq \rho$ one has:
\begin{equation}    \label{eqn:55c}
\sum_{ \substack{S \subseteq \{ 1, \ldots ,k \} \ \mathrm{with}  \\
                 \rho \wedge \pi_{ { }_S} = \pi }        } 
\ (-1)^{|S|} \ = \ (-1)^{| \pi |_1 /2}.
\end{equation}
It is clear that plugging (\ref{eqn:55c}) into (\ref{eqn:55b})
will lead to the formula (\ref{eqn:55a}) claimed by the lemma.

\vspace{6pt}

We are thus left to fix a partition $\pi \in \cP_{\leq 2} (k)$ such that 
$\pi \leq \rho$, for which we verify that (\ref{eqn:55c}) holds.  Let us
write explicitly $\rho = \{ V_1, \ldots , V_p, W_1, \ldots , W_q \}$, 
where the pairs $V_1, \ldots , V_p$ are also appearing in $\pi$, while 
each of $W_1, \ldots , W_q$ is broken into two singleton-blocks of $\pi$ (which,
in particular, forces the relation $| \pi |_1 = 2q$).
We leave it as an exercise to the reader to check the elementary fact that
for a subset $S \subseteq \{ 1, \ldots , k \}$ one has: 
\begin{equation}   \label{eqn:55d}
\Bigl( \rho \wedge \pi_{ { }_S } = \pi \Bigr)
\ \Leftrightarrow \ \Bigl( \begin{array}{c}
S \cap V_i = \emptyset \mbox{ for every $1 \leq i \leq p$, and}  \\
S \cap W_j \neq \emptyset \mbox{ for every $1 \leq j \leq q$ }
\end{array}   \Bigr) .
\end{equation}
By following the description indicated on the right-hand side of 
(\ref{eqn:55d}), it is immediate that the sets 
$S \subseteq \{ 1, \ldots , k \}$ such that $\rho \wedge \pi_{ { }_S } = \pi$
are enumerated, without repetitions, by taking the following steps:
\begin{equation}   \label{eqn:55e}
\begin{array}{lcl}
\vline & \mbox{Step 1.} 
       & \mbox{Pick an integer $q'$ such that $0 \leq q' \leq q$.}          \\
\vline & \mbox{Step 2.}
       &  \mbox{Select $q'$ of the pairs $W_1, \ldots , W_q$.  Say we selected the} \\
\vline &        
       &  \mbox{ $\ $ pairs $\{ W_j \mid j \in Q' \}$,
            where $Q' \subseteq \{ 1, \ldots , q \}$ has $|Q'| = q'$.}    \\
\vline & \mbox{Step 3.}
       & \mbox{Choose one element of each of the pairs selected in Step 2.}   \\
\vline &
       & \mbox{ $\ $ That is, for every $j \in Q'$ choose a $c_j \in W_j$.}        \\
\vline & \mbox{Step 4.} 
       & \mbox{Put $S := \{ c_j \mid j \in Q' \} \cup 
                     \Bigl( \cup_{j \in Q''} W_j \Bigr)$, where 
                     $Q'' = \{ 1, \ldots , q \} \setminus Q'$.}
\end{array}
\end{equation}
Note that every $S$ produced by (\ref{eqn:55e}) has a 
cardinality $|S|$ of the same parity as $q'$.

So then, the sum on the left-hand side of (\ref{eqn:55c}) is 
found to be equal to:
\begin{equation}   \label{eqn:55f}
\sum_{q' = 0}^q \binom{q}{q'}
\cdot 2^{q'} \cdot (-1)^{q'},
\end{equation}
where: 
the binomial coefficient corresponds to the choice of $Q'$ in Step 2; 
the factor $2^{q'}$ corresponds to the choices of $c_j$'s made in Step 3;
and the factor $(-1)^{q'}$ corresponds to the $(-1)^{|S|}$ from the
summation on the left-hand side of (\ref{eqn:55c}).

But (\ref{eqn:55f}) is, however,
the binomial expansion for $\bigl( 1 + 2 \cdot (-1) \bigr)^q$, 
and is thus equal to $(-1)^q$.  Since 
$q = | \pi |_1 /2$, we have indeed
arrived to the right-hand side of (\ref{eqn:55c}).
\end{proof}

\vspace{6pt}

The new form of Equation (\ref{eqn:49b}) that was announced 
at the beginning of the section is then stated as follows.

\vspace{6pt}

\begin{proposition}   \label{prop:56}
For an even $k \in \bN$, the limit law $\muw$ from Theorem 
\ref{thm:25} has
\begin{equation}    \label{eqn:56a}
\int_{\bR} t^k \, d \muw (t) 
= \sum_{\pi \in \cP_{\leq 2} (k)} 
(-1)^{ | \pi |_1/2 } \cdot ( | \pi |_1 -1 )!! \cdot \vvarphi ( \tau_{\pi} ),
\end{equation}
where the non-negative even integer $| \pi |_1$ is picked from
Notation \ref{def:51}.
\end{proposition}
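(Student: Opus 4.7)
The plan is to combine the formula for moments given by Corollary \ref{cor:49} with the re-expression of $\bt(\rho)$ obtained in Lemma \ref{lemma:55}, and then switch the order of summation in the resulting double sum. More precisely, for even $k$ we have
\[
\int_{\bR} t^k \, d\muw (t) \; = \; \mu(X^k) \; = \; \sum_{\rho \in \cP_2(k)} \bt(\rho),
\]
and applying Lemma \ref{lemma:55} to each $\bt(\rho)$ gives a double sum
\[
\mu(X^k) \; = \; \sum_{\rho \in \cP_2(k)} \ \sum_{\substack{\pi \in \cP_{\leq 2}(k) \\ \pi \leq \rho}} (-1)^{|\pi|_1/2} \, \vvarphi(\tau_\pi).
\]
Swapping the order of summation, this becomes
\[
\mu(X^k) \; = \; \sum_{\pi \in \cP_{\leq 2}(k)} (-1)^{|\pi|_1/2} \, \vvarphi(\tau_\pi) \cdot N(\pi),
\]
where $N(\pi) := |\{\rho \in \cP_2(k) \mid \pi \leq \rho \}|$.

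The one remaining step is to identify $N(\pi) = (|\pi|_1 - 1)!!$ for every $\pi \in \cP_{\leq 2}(k)$. This is a short combinatorial observation: since both $\pi$ and $\rho$ have all blocks of size at most $2$, and $\pi \leq \rho$ forces every block of $\pi$ to be contained in a block of $\rho$, the pairs of $\pi$ must appear verbatim as pairs of $\rho$, and the freedom in choosing $\rho$ reduces exactly to pairing up the $|\pi|_1$ singleton-points of $\pi$ into pairs. The number of pair-partitions of an $m$-element set is $(m-1)!!$ when $m$ is even (and $0$ when $m$ is odd, but $|\pi|_1$ has the same parity as $k$, which is even here). With the standard convention $(-1)!! = 1$, this also covers the case $|\pi|_1 = 0$, where $\pi$ itself is the unique pair-partition above it.

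Substituting $N(\pi) = (|\pi|_1 - 1)!!$ yields the formula (\ref{eqn:56a}). I do not expect any real obstacle: the proof is essentially a Fubini-type rearrangement, and the only content is the short counting argument for $N(\pi)$. The main reason for stating the result in this packaged form is that it isolates the role of $\vvarphi(\tau_\pi)$ as the unique object depending on $\pi$ through the character -- which is what will be needed in the subsequent sections in order to transition from $\tau_\pi$ to $\sigma_\pi$ via Proposition \ref{prop:211}.
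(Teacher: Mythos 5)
Your proof is correct and follows essentially the same route as the paper's: start from Corollary \ref{cor:49}, expand each $\bt(\rho)$ via Lemma \ref{lemma:55}, interchange the order of summation over $\rho$ and $\pi$, and finish by counting that there are $(|\pi|_1 - 1)!!$ pair-partitions $\rho \in \cP_2(k)$ lying above a given $\pi \in \cP_{\leq 2}(k)$. The combinatorial justification you give for that count (the pairs of $\pi$ appear verbatim in $\rho$, and the only freedom is in pairing up the singletons of $\pi$) is exactly the observation the paper invokes.
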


\begin{proof}  As discussed in Remark \ref{rem:410}, the moment of order
$k$ of $\muw$ can be viewed as $\mu (X^k)$, where $\mu : \bC [X] \to \bC$ 
is the linear functional appearing in Corollary \ref{cor:49}.  Moreover,
Equation (\ref{eqn:49b}) of Corollary \ref{cor:49} expresses $\mu (X^k)$
as a summation over $\rho \in \cP_2 (k)$.  In that summation we replace
every term by using Lemma \ref{lemma:55}, and then we interchange the order
of the summations over $\rho$ and $\pi$.  We get
\[
\sum_{\pi \in \cP_{\leq 2} (k)}
\Bigl( \, \sum_{\substack{\rho \in \cP_2(k) 
                   \\ \rho \geq \pi}} (-1)^{| \pi |_1 /2} 
\vvarphi ( \tau_{\pi} ) \, \Bigr).
\]
This is exactly the quantity indicated on the right-hand side of
(\ref{eqn:56a}), since for every $\pi \in \cP_{\leq 2} (k)$
there are $( | \pi |_1 -1 )!!$ pairings 
$\rho \in \cP_2 (k)$ such that $\rho \geq \pi$.
\end{proof}

$\ $

\section{Combinatorics: $\sigma_{\pi}$,
$\tau_{\pi}$ and a relation between them}

\begin{notation-and-remark}    \label{def:61}
{\em (The permutation $\sigma_{\pi}$.)}
Consider a partition $\pi \in \cP_{\leq 2} (k)$, which we write
explicitly in the same way as in Step 1 of Definition \ref{def:52}:
$\pi = \{ V_1, \ldots , V_{\ell} \}$, with $\min (V_i) =: a_i$
and $\max (V_i) =: b_i$ for all $1 \leq i \leq \ell$,
and where $k = b_1 > \cdots > b_{\ell}$.  In Definition \ref{def:52}
we introduced a permutation $\tau_{\pi}$ associated to $\pi$, and this
was important for discussing the moments of the limit law $\muw$ of 
our Theorem \ref{thm:25}.  But frequently encountered in the 
literature there is however another permutation associated to $\pi$, 
which we will denote as $\sigma_{\pi}$, and is simply defined as 
\begin{equation}    \label{eqn:61a}
\sigma_{\pi} := 
\prod_{ \substack{  1 \leq i \leq \ell \\
                      \mathrm{with} \ a_i \neq b_i }  } 
\ (a_i, b_i) \in S_k \subseteq S_{\infty}.
\end{equation}

At first sight,the permutations $\sigma_{\pi}$ and $\tau_{\pi}$ 
do not appear to be related to each other.  But it turns out that 
there exists a non-trivial connection, at the level of orbit structures, 
between $\tau_{\pi}$ and the product $\cyc_{k+1} \cdot \sigma_{\pi}$, 
where $\cyc_{k+1} \in S_{k+1}$ is the forward cycle from 
Notation \ref{def:21}.  We will first explain how this goes on 
a concrete example (cf.~Example \ref{example:63}), then the general 
case is addressed in Proposition \ref{prop:66} at the end of the
section.
\end{notation-and-remark}

\vspace{6pt}

\begin{notation}    \label{def:62}
Notation as above.  In addition to $a_1, \ldots , a_{\ell}$
and $b_1, \ldots , b_{\ell}$ we put
\begin{equation}   \label{eqn:62a}
a_0 = b_0 := k+1,
\end{equation}
and we denote 
\begin{equation}   \label{eqn:62b}
B_{\pi} := \{ b_{\ell}, \ldots, b_1, b_0 \} 
\subseteq \{ 1, \ldots , k+1 \}.
\end{equation}
\end{notation}

\vspace{6pt}

\begin{example}   \label{example:63}
Let us look again at Example \ref{example:53}
of the preceding section, where we had $k=7$ and
$\pi = \bigl\{ \{ 1,6 \}, \{ 2,5 \}, \{ 3 \}, \{ 4,7 \} \bigr\}
\in \cP_{\leq 2} (7)$, with blocks listed in the order
\[
V_1 = \{ 4,7 \}, \, V_2 = \{ 1,6 \}, 
\, V_3 = \{ 2,5 \}, \, V_4 = \{ 3 \}.
\]
In this example, the set $B_{\pi}$ from (\ref{eqn:62b}) comes out as
$B_{\pi} = \{ 3,5,6,7,8 \}$.

We have $\sigma_{\pi} = (1,6)(2,5)(4,7) \in S_7$.  In order
to connect to $\tau_{\pi}$, it turns out to be relevant to 
examine how the set $B_{\pi}$ is partitioned by its 
intersections with orbits of $\cyc_{k+1} \cdot \sigma_{\pi}$. 
In the example at hand we have
\[
\cyc_8 \cdot \sigma_{\pi} = (1,2, \ldots ,8) \cdot (1,6)(2,5)(4,7) 
= (1,7,5,3,4,8) (2,6) \in S_8,
\]
and intersecting $B_{\pi}$ with the orbits of
$\cyc_8 \cdot \sigma_{\pi}$ thus gives
\[
B_{\pi} = \{ 3,5,7,8 \} \cup \{ 6 \},
\]
a partition of type $4+1$.  Now, we saw in 
Example \ref{example:53} that $\tau_{\pi} = (1,5,4,2) (3) \in S_5$,
which also has an orbit structure of type $4+1$.  
Proposition \ref{prop:66} below claims this is not a coincidence.
In order to arrive to that general statement, we will use a lemma,
preceded by another bit of notation.
\end{example}

\vspace{6pt}

\begin{notation}[{\em Induced permutation}]   \label{def:64}
Let $\tau \in S_{\infty}$ and let $A \subseteq \bN$ be finite and 
non-empty.  We will use the notation $\left. \tau \right|_A$ for 
the permutation in $S_{\infty}$ which is obtained by only retaining
how $\tau$ acts on $A$.

If $A$ happens to be invariant with respect to $\tau$, then 
$\left. \tau \right|_A$ is simply obtained by keeping the cycles
of $\tau$ that are contained in $A$, while every 
$n \in \bN \setminus A$ becomes a fixed point.

In general, $\left. \tau \right|_A$ maps every $a \in A$ to the 
first element of the orbit $\tau (a), \tau^2 (a), \ldots$ 
which is found to be again in $A$ (while every $n \in \bN \setminus A$
becomes a fixed point).  This is very nicely followed in cycle notation:
in order to obtain $\left. \tau \right|_A$ we start from the cycle
notation for $\tau$, and remove the elements of $\bN \setminus A$ 
that might appear in it.

\noindent
[A concrete example: if $\tau = (1,3,2)(5,6)$ and $A = \{ 1,2,5,7 \}$, 
then $\left. \tau \right|_A = (1, \not3, 2)(5, \not6) = (1,2)(5) = (1,2)$.]
\end{notation}

$\ $

\begin{lemma}   \label{lemma:65}
Let $\pi \in \cP_{\leq 2} (k)$ and consider the
permutations $\sigma_{\pi}$, $\tau_{\pi}$ introduced above
(in Notation \ref{def:61} and in Definition \ref{def:52},
respectively).  Let us form the induced permutation 
\begin{equation}   \label{eqn:65a}
\left. (\sigma_{\pi} \cdot \cyc_{k+1}^{-1} ) 
\right|_{B_{\pi}} \in S_{k+1},
\end{equation}
where $\cyc_{k+1} \in S_{k+1}$ is the forward cycle and
$B_{\pi} = \{ b_{\ell}, \ldots , b_1, b_0 \} \subseteq 
\{ 1, \ldots , k+1 \}$ is as in Notation \ref{def:62}. 
The permutation (\ref{eqn:65a}) relates to $\tau_{\pi}$ 
in the following way: 
\begin{equation}    \label{eqn:65b}
\left\{   \begin{array}{l}
\mbox{if $i,j \in \{0, \ldots, \ell\}$
are such that } 
\bigl( \left. ( \sigma_{\pi} \cdot \cyc_{k+1}^{-1} ) 
\right|_{B_{\pi}} \bigr) \, (b_i) = b_j,              \\ 
\mbox{ then it follows that $\tau_{\pi} (i+1) = j+1$.}
\end{array}  \right.
\end{equation}
\end{lemma}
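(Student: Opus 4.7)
The plan is to trace, in parallel, two dynamics and match their endpoints under the bijection $b_j \leftrightarrow j + 1$: on one side, the ``ball trajectory'' obtained by applying the product $\tau_{\pi} = \gamma_{\ur(1)} \cdots \gamma_{\ur(k)}$ to $i + 1$ from right to left (recall $\gamma_m = (1, m{+}1)$, so each factor either swaps positions $1 \leftrightarrow m{+}1$ or leaves the ball alone), and on the other side, the orbit of $b_i$ under $\beta := \sigma_{\pi} \cyc_{k+1}^{-1}$, which is given explicitly by $\beta(h) = \sigma_{\pi}(h-1)$ for $h \geq 2$ and $\beta(1) = k+1$. The main work is a disciplined case analysis; no deep input is required.

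I would introduce a joint recursion, using the convention $b_0 := k + 1$. Set $T_1 := b_i$ and $m_s := \ur(T_s - 1)$, so that by construction $T_s - 1 \in V_{m_s}$. Declare $T_{s+1}$ defined, with $T_{s+1} := a_{m_s}$, precisely when $V_{m_s}$ is a pair \emph{and} $T_s - 1 = b_{m_s}$; otherwise terminate. Let $N$ denote the largest index $s$ for which $T_s$ and $m_s$ are both defined.

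On the $\tau_{\pi}$-side, I would verify by direct analysis that the ball first moves at time $T_1 = b_i$ from position $i + 1$ to $1$, and then, for each $1 \leq s \leq N$, it moves from $m_{s-1} + 1$ to $1$ at time $T_s$ and from $1$ to $m_s + 1$ at time $T_s - 1$. When the recursion terminates at step $N$, one of two things happens: either $V_{m_N}$ is a singleton (so no further step can displace the ball off $m_N + 1$), or $V_{m_N}$ is a pair with $T_N - 1 = a_{m_N}$, in which case the other element $b_{m_N}$ is strictly larger than $T_N - 1$ and again cannot move the ball. Either way, the ball rests at $m_N + 1$ through the end, giving $\tau_{\pi}(i + 1) = m_N + 1$.

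On the $\beta$-side, I would show by induction on $s$ that $h_s := \beta^s(b_i)$ belongs to $V_{m_s}$, with $h_s = a_{m_s}$ exactly when $T_{s+1}$ is defined (the ``continue'' case) and $h_s = b_{m_s}$ otherwise. The inductive step uses $h_{s+1} = \sigma_{\pi}(h_s - 1) = \sigma_{\pi}(T_{s+1} - 1)$, combined with the fact that $\sigma_{\pi}$ swaps $a_m \leftrightarrow b_m$ within each pair and fixes singletons. The matching with $B_{\pi}$ is then clinched by the elementary observation that for $m \geq 1$ one has $a_m \in B_{\pi}$ only when $V_m$ is a singleton (because each element of $\{1,\ldots,k\}$ lies in a unique block of $\pi$), so the $\beta$-walk first re-enters $B_{\pi}$ exactly at step $N$, landing at $h_N = b_{m_N}$. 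Combining the two sides, $\tau_{\pi}(i+1) = m_N + 1$ corresponds under $b_j \leftrightarrow j + 1$ exactly to $\beta|_{B_{\pi}}(b_i) = b_{m_N}$, which is the stated equivalence. A small boundary case is $a_{m_s} = 1$ at some step $s \leq N$: then $T_{s+1} = 1$, the $\beta$-walk exits via the branch $\beta(1) = k + 1 = b_0$, while on the $\tau_{\pi}$-side the final transposition drives the ball to position $1 = 0 + 1$; the correspondence is preserved. The main obstacle in writing this out is the careful bookkeeping of singleton-versus-pair cases and of whether $T_s - 1$ is the max or the min of its block; once the joint recursion is set up, each individual check is routine.
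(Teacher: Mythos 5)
Your proof is correct and follows essentially the same strategy as the paper's: trace the $\beta := \sigma_{\pi}\cyc_{k+1}^{-1}$-orbit of $b_i$ via the formula $\beta(h)=\sigma_{\pi}(h-1)$, extract the chain of intermediate blocks, and match it against the right-to-left tracking of $i+1$ under the product of star-transpositions; the paper's separate treatment of $q=1$ versus $q>1$ is simply absorbed into your single joint recursion $(T_s,m_s)$. One small point worth making explicit when writing this out: your setup requires $m_1 = \ur(b_i-1)$, which is undefined when $b_i=1$ (a singleton block $\{1\}$); this is the same $\ur(0)$ degeneracy as your $a_{m_s}=1$ boundary case and is handled identically (then $\beta(b_i)=k+1=b_0$ directly and $\tau_{\pi}(i+1)=1$), but it is an additional base case the termination analysis should name. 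The paper also disposes of the case $i=0$ separately by a bijectivity/counting argument, whereas your recursion handles it with a vacuous first step — worth a remark so the reader sees that $T_1=k+1$ lying outside $\{1,\ldots,k\}$ is harmless.
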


\begin{proof}  The case when in (\ref{eqn:65b}) we have $i=0$ 
is slightly different from the others.  But we can afford to 
not discuss this case, due to the following observation: if we
assume that all cases with $i \neq 0$ have been verified, then 
the case $i=0$ automatically follows.  Indeed, the permutation
$\left. ( \sigma_{\pi} \cdot \cyc_{k+1}^{-1} ) 
\right|_{B_{\pi}}$ gives in particular a bijection
$f$ from the set $B_{\pi} = \{ b_0, b_1, \ldots , b_{\ell} \}$ 
onto itself.  Denote
$f(b_0) = b_{j_0}, f(b_1) = b_{j_1}, 
\ldots , f(b_{\ell}) = b_{j_{\ell}}$,
(where $\{ j_0, \ldots , j_{\ell} \}$ is a permutation of
$\{ 0, 1, \ldots , \ell \}$) and suppose it was verified that 
\[
\pi_{\tau} (1+1) = j_1 + 1, \,  \pi_{\tau}  (2+1) = j_2 +1, 
\ldots , \pi_{\tau} ( \ell +1 ) = j_{\ell} + 1.
\]
By equating the set-differences
\[
\{ 1, \ldots , \ell + 1 \} \setminus
\{ \tau_{\pi} (2), \ldots , \tau_{\pi} (\ell + 1) \} 
= \{ 1, \ldots , \ell + 1 \} \setminus
\{ j_1 +1, \ldots , j_{\ell} + 1 \}
\]
we then find that $\tau_{\pi} (1) = j_0 + 1$, which is precisely the 
missing case $i=0$ of (\ref{eqn:65b}).

\vspace{6pt}

For the remaining part of the proof we fix 
$i,j \in \{ 0, \ldots , \ell \}$, with $i \neq 0$, and
where $\bigl( \left. ( \sigma_{\pi} \cdot \cyc_{k+1}^{-1} ) 
\right|_{B_{\pi}} \bigr) \, (b_i) = b_j$.  The latter equation
entails the existence of a $q \geq 1$ such that 
\begin{equation}    \label{eqn:65c}
( \sigma_{\pi} \cdot \cyc_{k+1}^{-1} )^q (b_i) = b_j, \mbox{ while } 
( \sigma_{\pi} \cdot \cyc_{k+1}^{-1} )^p  (b_i) \not\in B_{\pi}
\mbox{ for every $1 \leq p < q$.} 
\end{equation}
Based on (\ref{eqn:65c}), we have to prove that 
$\tau_{\pi} (i+1) = j+1$.  

\vspace{6pt}

{\em Subcase $q=1$.}
It is instructive to work out in detail the case when 
in (\ref{eqn:65c}) we have $q=1$, i.e.~we simply have 
$( \sigma_{\pi} \cdot \cyc_{k+1}^{-1} ) (b_i) = b_j$.  By applying 
$\sigma_{\pi}$ to both sides of this equation, and upon recalling 
that $\sigma_{\pi}$ swaps $a_j$ and $b_j$ for every 
$0 \leq j \leq \ell$, we find that
\begin{equation}   \label{eqn:65d}
\cyc_{k+1}^{-1} (b_i) = a_j.
\end{equation}
Let us assume for a moment that in (\ref{eqn:65d}) we have
$b_i > 1$, and the equation thus says that $a_j = b_i - 1$.
On the other hand, let us look at the permutation $\tau_{\pi}$
and let us follow how it acts on the number $i+1$.  Recall 
from Definition \ref{def:52} (cf.~Equation (\ref{eqn:52c})) that
$\tau_{\pi}$ is defined as a product of star-transpositions,
which we write here by putting into evidence the factors on the
consecutive positions $a_j$ and $b_i$:
\begin{equation}    \label{eqn:65e}
\tau_{\pi} = \bigl( \prod_{1 \leq h < a_j} \gamma_{\ur(h)} \bigr) 
\cdot \gamma_{\ur( a_j)} \gamma_{\ur (b_i)} \cdot
\bigl( \prod_{b_i < h \leq k} \gamma_{\ur(h)} \bigr). 
\end{equation}
Concerning the product (\ref{eqn:65e}), we make the following
observations.

(i) The part $\prod_{b_i < h \leq k} \gamma_{\ur(h)}$ does not move
$i+1$ at all, since it has no factors ``$\gamma_i$''.

(ii) The part $\gamma_{\ur( a_j)} \gamma_{\ur (b_i)}$ comes to
$\gamma_j \gamma_i$, and sends $i+1$ to $j+1$:
$\gamma_j \bigl( \gamma_i (i+1) \bigr) = \gamma_j (1) = j + 1$.

\hspace{0.3cm}
So from here on we have to watch for what happens to $j+1$.  But:

(iii) The part $\prod_{1 \leq h < a(j)} \gamma_{\ur(h)}$ of 
(\ref{eqn:65e}) does not move $j+1$, since it has no factors
``$\gamma_j$''.

\noindent
So overall, the conclusion is that $\tau_{\pi} (i+1) = j+1$, 
as we wanted to prove.

\vspace{6pt}

In order to complete the discussion of the case $q=1$, 
we must also examine the subcase when in (\ref{eqn:65d}) we 
have $b_i = 1$.  This happens precisely when the partition $\pi$ 
has a singleton-block at $\{ 1 \}$, which appears last in the 
numbering of blocks of $\pi$: $V_{\ell} = \{ 1 \}$, with 
$a_{\ell} = b_{\ell} = 1$. Thus it must be that $i = \ell$, while 
the equation $a_j = \cyc_{k+1}^{-1} (1) = k+1$ implies $j=0$. Our 
desired conclusion about the action of $\tau_{\pi}$ thus comes to
$\tau_{\pi} ( \ell + 1) = 1$.  This is indeed true, and is verified 
by writing
$\tau_{\pi} = \gamma_{\ur (1)} 
\cdot \bigl( \prod_{2 \leq h \leq k} \gamma_{\ur(h)} \bigr)$,
where
$\prod_{2 \leq h \leq k} \gamma_{\ur(h)}$
does not move $\ell + 1$, while $\gamma_{\ur (1)}$ is $\gamma_{\ell}$,
sending $\ell + 1$ to $1$.

\vspace{6pt}

{\em Subcase $q>1$.}
In what follows we assume that $q$ of (\ref{eqn:65c}) is $\geq 2$.  

It is clear that the orbit of $b_i$ under the action of
$\sigma_{\pi} \cdot \cyc_{k+1}^{-1}$ is contained in $\{ 1, \ldots , k+1 \}$.
Hence the numbers $( \sigma_{\pi} \cdot \cyc_{k+1}^{-1} )^p  (b_i)$ with
$1 \leq p < q$ are in $\{ 1, \ldots , k+1 \} \setminus B_{\pi}$, which 
forces every such $( \sigma_{\pi} \cdot \cyc_{k+1}^{-1} )^p  (b_i)$ to be of
the form $a_{i(p)} = \min ( V_{i(p)} )$ for a pair 
$V_{i(p)} = \{ a_{i(p)}, b_{i(p)} \}$ of $\pi$.  We thus have:
\begin{equation}   \label{eqn:65f}
\left\{   \begin{array}{l}
( \sigma_{\pi} \cdot \cyc_{k+1}^{-1} )  (b_i) = a_{i(1)},
\ ( \sigma_{\pi} \cdot \cyc_{k+1}^{-1} ) ( a_{i(1)} ) = a_{i(2)}, \ldots  \\
                    \\
\mbox{ $\ $} \ldots, 
( \sigma_{\pi} \cdot \cyc_{k+1}^{-1} ) ( a_{i(q-2)} ) = a_{i(q-1)},
\ ( \sigma_{\pi} \cdot \cyc_{k+1}^{-1} ) ( a_{i(q-1)} ) = b_j.
\end{array}  \right.
\end{equation}

Similarly to how we did in (\ref{eqn:65d}) above, we apply $\sigma_{\pi}$
to both sides of each of the equalities listed in (\ref{eqn:65f}), and 
we arrive to: 
\begin{equation}   \label{eqn:65g}
\cyc_{k+1}^{-1}  (b_i) = b_{i(1)},
\ \cyc_{k+1}^{-1} ( a_{i(1)} ) = b_{i(2)}, \ldots ,
\ \cyc_{k+1}^{-1} ( a_{i(q-2)} ) = b_{i(q-1)},
\ \cyc_{k+1}^{-1} ( a_{i(q-1)} ) = a_j.
\end{equation}
With the possible exception of the last one listed, the instances
of $\cyc_{k+1}^{-1}$ that appear in (\ref{eqn:65g}) are genuine subtractions 
by $1$.  This is because all of $b_{i(1)}, \ldots , b_{i(q-1)}$ are 
elements of some blocks of $\pi$, and are thus numbers from 
$\{ 1, \ldots , k \}$, which cannot be equal to $k+1$.  We may 
therefore re-write (\ref{eqn:65e}) in the form
\begin{equation}   \label{eqn:65h}
b_i - 1 = b_{i(1)},
\  a_{i(1)}   - 1 = b_{i(2)}, \ldots ,
\  a_{i(q-2)} - 1 = b_{i(q-1)},
\  \cyc_{k+1}^{-1} ( a_{i(q-1)} ) = a_j,
\end{equation}
where for the last occurrence of $\cyc_{k+1}^{-1}$ we have to also 
consider the possibility that $i(q-1) = \ell$ (with $a_{\ell} = 1$)
and $j=0$ (with $a_0 = k+1$).

We now move to examine how the permutation $\tau_{\pi}$ acts 
on the number $i+1$.  Assume first that we are in the
situation with $a_{i(q-1)} > 1$, when the last equality in 
(\ref{eqn:65h}) just says that $a_j = a_{i(q-1)} -1$.
We break the product defining $\tau_{\pi}$ in a way which follows the 
same idea as we used in (\ref{eqn:65e}), but where we now separate 
multiple pieces of the product:
\[
\tau_{\pi} = 
\bigl( \prod_{1 \leq h < a_j} \gamma_{\ur(h)} \bigr) 
\cdot \gamma_{\ur(a_j)} \gamma_{\ur( a_{i(q-1)} )} \cdot
\bigl( \prod_{a_{i(q-1)} < h < b_{i(q-1)}} \gamma_{\ur(h)} \bigr)  
\cdot \gamma_{\ur( b_{i(q-1)} )} \gamma_{\ur ( a_{i(q-2)} )} \cdots
\]
\begin{equation}   \label{eqn:65i}
\cdots \gamma_{\ur( b_{i(2)} )} \gamma_{\ur ( a_{i(1)} )} \cdot
\bigl( \prod_{a_{i(1)} < h < b_{i(1)}} \gamma_{\ur(h)} \bigr) \cdot 
\gamma_{\ur( b_{i(1)} )} \gamma_{\ur(b_i)} \cdot
\bigl( \prod_{b_i < h \leq k} \gamma_{\ur(h)}  \bigr).
\end{equation}
The reader should have no difficulty to check that, in (\ref{eqn:65i}):
$\gamma_{\ur( b_{i(1)} )} \gamma_{\ur(b_i)}$ sends $i+1$ to $i(1) + 1$,
then $\gamma_{\ur( b_{i(2)} )} \gamma_{\ur ( a_{i(1)} )}$ sends 
$i(1) + 1$ to $i(2) +1$, and so on until we reach 
$\gamma_{\ur(a_j)} \gamma_{\ur( a_{i(q-1)} )}$ sending 
$i(q-1) + 1$ to $j+1$, while all the products
``$\prod_{h} \gamma_{\ur(h)}$'' do not participate in the action.
Hence overall we get that $\tau_{\pi} (i+1) = j+1$, as we wanted to prove.
An illustration of how things look in this case (with $q \geq 2$ and 
$j \neq 0$) appears in Figure 3.

\vspace{10pt}

\begin{center}
\setlength{\unitlength}{0.7cm}
$\begin{picture}(12,2) \thicklines
   \put(0,0){\line(0,1){1}}
   \put(0,0){\line(1,0){3.2}}
   \put(0,-1.5){$\uparrow$}
   \put(0,-2){$a_j$}
   \put(-1.2,0.5){$\ldots$}
   \put(1,-0.5){\line(0,1){1,5}}
   \put(1,-0.5){\line(1,0){4}}
   \put(1,-1.5){$\uparrow$}
   \put(0.8,-2){$a_{i(2)}$}
   \put(1.8,0.5){$\ldots$}
   \put(3.2,0){\line(0,1){0.6}}
   \put(3.8,0.5){$\ldots$}
   \put(5,-0.5){\line(0,1){1.5}}
   \put(5,-1.5){$\uparrow$}
   \put(4.8,-2.1){$b_{i(2)}$}
   \put(6,-0.5){\line(0,1){1.5}}
   \put(6,-0.5){\line(1,0){4.2}}
   \put(6,-1.5){$\uparrow$}
   \put(6,-2){$a_{i(1)}$}
   \put(10.2,-0.5){\line(0,1){1.5}}
   \put(10.2,-1,5){$\uparrow$}
   \put(10,-2){$b_{i(1)}$}
   \put(6.8,0.5){$\ldots$}
   \put(8,0){\line(0,1){0.6}}
   \put(8,0){\line(1,0){3.2}}
   \put(8.75,0.5){$\ldots$}
   \put(11.2,0){\line(0,1){1}}
   \put(11.2,-1.5){$\uparrow$}
   \put(11.1,-2){$b_i$}
   \put(11.8,0.5){$\ldots$}
   \put(-0.3,1.2){$\gamma_j$}
   \put(0.7,1.2){$\gamma_{i(2)}$}
   \put(4.5,1.2){$\gamma_{i(2)}$}
   \put(5.7,1.2){$\gamma_{i(1)}$}
   \put(9.7,1.2){$\gamma_{i(1)}$}
   \put(10.9,1.2){$\gamma_i$}
\end{picture}$

\vspace{1.8cm}

{\bf Figure 3.} {\em An illustration of the case $q=3$,
$j \neq 0$ in the proof of Lemma \ref{lemma:65}.

The picture shows $\gamma_i$ on top of position $b_i$ and
$\gamma_j$ on top of position $a_j$, also

$\gamma_{i(1)}$ on top of positions
$b_{i(1)}$, $a_{i(1)}$ and $\gamma_{i(2)}$ on 
top of positions $b_{i(2)}$, $a_{i(2)}$.

Note that $a_j = a_{i(2)} -1, 
\, b_{i(2)} = a_{i(1)} -1,
\, b_{i(1)} = b_i - 1$.}
\end{center}

\vspace{10pt}

In order to finalize the discussion of the case when $q>1$, 
we must also examine the leftover subcase with $i(q-1) = \ell$ and
$j=0$, when the last equality stated in (\ref{eqn:65h}) amounts to 
$\cyc_{k+1}^{-1} (1) = k+1$.  The way to break into pieces the product
which defines $\tau_{\pi}$ now becomes:
\[
\tau_{\pi} = \gamma_{\ur(1)}
\bigl( \prod_{1 < h < b_{i(q-1)}} \gamma_{\ur(h)} \bigr) 
\cdot \gamma_{\ur( b_{i(q-1)} )} \gamma_{\ur ( a_{i(q-2)} )} \cdots
\]
\begin{equation}   \label{eqn:65j}
\cdots \gamma_{\ur( b_{i(2)} )} \gamma_{\ur ( a_{i(1)} )} \cdot
\bigl( \prod_{a_{i(1)} < h < b_{i(1)}} \gamma_{\ur(h)} \bigr) \cdot 
\gamma_{\ur( b_{i(1)} )} \gamma_{\ur(b_i)} \cdot
\bigl( \prod_{b_i < h \leq k} \gamma_{\ur(h)}  \bigr).
\end{equation}
The steps of the action of $\tau_{\pi}$ on $i+1$ are then the following:
$\gamma_{\ur( b_{i(1)} )} \gamma_{\ur(b_i)}$ sends $i+1$ to $i(1) + 1$,
then $\gamma_{\ur( b_{i(2)} )} \gamma_{\ur ( a_{i(1)} )}$ sends 
$i(1) + 1$ to $i(2) +1$, and so on until we arrive to 
$\gamma_{\ur( b_{i(q-1)} )} \gamma_{\ur( a_{i(q-2)} )}$ sending 
$i(q-2) + 1$ to $i(q-1) + 1 = \ell + 1$, and at the very end 
$\gamma_{\ur(1)}$ sends $\ell + 1$ to $1$.  (Same as was the case in
(\ref{eqn:65i}), the ``$\prod_{h} \gamma_{\ur(h)}$'' pieces listed 
in (\ref{eqn:65j}) do not participate in the action.)  The value of $j$
used in this case is $j=0$; thus the conclusion coming out of 
(\ref{eqn:65j}), that $\tau_{\pi} (i+1) = 1$, completes the proof of
this lemma.
\end{proof}

\vspace{6pt}

\begin{proposition}    \label{prop:66}
Consider the same framework and notation as in the preceding
lemma.  Let $R_1, \ldots , R_p$ be the orbits of 
$\cyc_{k+1} \cdot \sigma_{\pi}$ which intersect $B_{\pi}$.  
Then: $\tau_{\pi}$ has exactly $p$ orbits that are contained
in $\{ 1, \ldots , \ell + 1 \}$, and the sizes of these orbits
are (upon suitable re-ordering) equal to 
$| R_1 \cap B_{\pi} |, \ldots , \ | R_p \cap B_{\pi} |$. 
\end{proposition}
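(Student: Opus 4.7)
The plan is to read Proposition \ref{prop:66} as a direct corollary of Lemma \ref{lemma:65}, once two standard facts are in place.

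First, I would note that $\sigma_{\pi}$ is an involution (it is a product of disjoint transpositions), so $(\cyc_{k+1} \cdot \sigma_{\pi})^{-1} = \sigma_{\pi}^{-1} \cdot \cyc_{k+1}^{-1} = \sigma_{\pi} \cdot \cyc_{k+1}^{-1}$. Since a permutation and its inverse have the same orbit partition, the orbits $R_1, \ldots, R_p$ of $\cyc_{k+1} \cdot \sigma_{\pi}$ that intersect $B_{\pi}$ coincide with the orbits of $\sigma_{\pi} \cdot \cyc_{k+1}^{-1}$ that intersect $B_{\pi}$. This brings us into the framework of Lemma \ref{lemma:65}.

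Second, I would invoke the elementary fact about induced permutations recorded in Notation \ref{def:64}: for any $\alpha \in S_{\infty}$ and any finite $A \subseteq \bN$, the orbits of $\left. \alpha \right|_A$ that lie in $A$ are precisely the nonempty intersections $R \cap A$ as $R$ ranges over the orbits of $\alpha$. Indeed, iterating $\left. \alpha \right|_A$ on some $a \in A$ amounts to recording the successive returns of the $\alpha$-orbit of $a$ to $A$, which exhausts $R \cap A$ exactly once. Applied to $\alpha = \sigma_{\pi} \cdot \cyc_{k+1}^{-1}$ and $A = B_{\pi}$, this says that the orbits of $\left. (\sigma_{\pi} \cdot \cyc_{k+1}^{-1}) \right|_{B_{\pi}}$ contained in $B_{\pi}$ are exactly $R_1 \cap B_{\pi}, \ldots, R_p \cap B_{\pi}$.

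Third, and this is where the real work has been done upstream, I would use Lemma \ref{lemma:65} to transfer this orbit information to $\tau_{\pi}$. Consider the bijection
\[
\phi : B_{\pi} \to \{ 1, \ldots , \ell + 1 \}, \qquad \phi (b_i) := i + 1 \quad (0 \leq i \leq \ell ).
\]
The lemma says precisely that if $\left. ( \sigma_{\pi} \cdot \cyc_{k+1}^{-1} ) \right|_{B_{\pi}}$ sends $b_i$ to $b_j$, then $\tau_{\pi}$ sends $\phi (b_i) = i+1$ to $\phi (b_j) = j+1$. Since both $\left. ( \sigma_{\pi} \cdot \cyc_{k+1}^{-1} ) \right|_{B_{\pi}}$ and $\tau_{\pi}$ induce bijections of the finite sets $B_{\pi}$ and $\{ 1, \ldots , \ell +1 \}$ respectively (each of cardinality $\ell + 1$), the single-sided implication in Lemma \ref{lemma:65} suffices to promote $\phi$ to a conjugation between the two finite bijections. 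Conjugate permutations have identical cycle structure, so the orbits of $\tau_{\pi}$ inside $\{ 1, \ldots , \ell + 1 \}$ are the $\phi$-images of the orbits of $\left. (\sigma_{\pi} \cdot \cyc_{k+1}^{-1}) \right|_{B_{\pi}}$ inside $B_{\pi}$, i.e.~$p$ orbits with sizes $|R_1 \cap B_{\pi}|, \ldots , |R_p \cap B_{\pi}|$. Finally, since $\tau_{\pi} \in S_{\ell+1}$ fixes every $n > \ell + 1$, every orbit of $\tau_{\pi}$ is either one of these $p$ orbits or a singleton outside $\{ 1, \ldots , \ell +1 \}$, so the count and sizes stated in the proposition are correct.

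The only real obstacle is the bookkeeping of the previous paragraph: checking that the one-directional implication in Lemma \ref{lemma:65}, together with the fact that both maps are bijections of equinumerous finite sets, is genuinely enough to force full conjugacy via $\phi$. Everything else is just packaging Lemma \ref{lemma:65} with the standard description of induced permutations.
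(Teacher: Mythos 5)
Your proposal is correct and follows essentially the same route as the paper's proof: recognize that $\cyc_{k+1}\cdot\sigma_{\pi}$ and $\sigma_{\pi}\cdot\cyc_{k+1}^{-1}$ share the same orbit partition (you deduce this via $\sigma_{\pi}^{-1}=\sigma_{\pi}$, the paper states the inverse relation directly), use the standard fact that the nontrivial orbits of an induced permutation $\left.\alpha\right|_A$ are the sets $R\cap A$ for orbits $R$ of $\alpha$ meeting $A$, and then read Lemma~\ref{lemma:65} as an intertwining relation $f\circ\varphi=\psi\circ f$ to transfer orbit sizes through the bijection $b_i\mapsto i+1$. Your worry about whether the ``one-directional'' implication in Lemma~\ref{lemma:65} is enough is unfounded: since $\left.(\sigma_{\pi}\cdot\cyc_{k+1}^{-1})\right|_{B_{\pi}}$ is a function on $B_{\pi}$, applying the implication at each $b_i$ already yields the full equality of maps, with no extra counting argument needed.
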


\begin{proof}  We first note that, since
$\cyc_{k+1} \cdot \sigma_{\pi}$ is the inverse of
$\sigma_{\pi} \cdot \cyc_{k+1}^{-1}$, we may view the sets 
$R_1, \ldots , R_p$ as the orbits of 
$\sigma_{\pi} \cdot \cyc_{k+1}^{-1}$ which intersect $B_{\pi}$.
Due to this fact and to how an induced permutation is 
defined (cf.~Notation \ref{def:64}), it is immediate that
$R_1 \cap B_{\pi}, \ldots , R_p \cap B_{\pi}$ can be viewed 
as the orbits of the bijection $\varphi : B_{\pi} \to B_{\pi}$ 
defined by
\begin{equation}   \label{eqn:66a}
\varphi (b) =  \bigl( \left.  ( \sigma_{\pi} \cdot \cyc_{k+1}^{-1} ) 
\right|_{B_{\pi}} \bigr) (b), \ \ \forall \, b \in B_{\pi}.
\end{equation}

On the other hand, the preceding lemma gives the relation
\begin{equation}   \label{eqn:66b}
f \circ \varphi = \psi \circ f
\ \mbox{ (equality of maps from $B_{\pi}$ to $\{1, \ldots , \ell +1 \}$),}
\end{equation}
where $\varphi$ is as in (\ref{eqn:66a}), while
$\psi: \{ 1, \ldots , \ell + 1 \} \to \{ 1, \ldots , \ell + 1 \}$
is defined by
$\psi (i) = \tau_{\pi} (i)$ for $1 \leq i \leq \ell +1$, and
$f : B_{\pi} \to \{ 1, \ldots , \ell + 1 \}$ simply maps  
$b_i$ to $i+1$ for every $0 \leq i \leq \ell$.  When combined 
with the observation from the preceding paragraph, 
(\ref{eqn:66b}) implies that the orbits of the bijection $\psi$ 
are $f( R_1 \cap B_{\pi} ), \ldots , f( R_p \cap B_{\pi} )$.
But the orbits of $\psi$ are precisely the orbits of $\tau_{\pi}$
which are contained in $\{ 1, \ldots , \ell + 1 \}$.  Thus the 
sizes of the latter orbits are 
$| R_1 \cap B_{\pi} |, \ldots , \ | R_p \cap B_{\pi} |$, as 
claimed.
\end{proof}

\vspace{6pt}

The preceding proposition was about the permutation 
$\cyc_{k+1} \cdot \sigma_{\pi} \in S_{k+1}$, which has some 
(possibly) non-trivial orbits contained in 
$\{ 1, \ldots , k+1 \}$ and then fixes every $n > k+1$.  
It is useful to complement the statement of Proposition 
\ref{prop:66} with the following observation.

\vspace{6pt}

\begin{proposition}   \label{prop:67}
The list of orbits $R_1, \ldots, R_p$ mentioned in 
Proposition \ref{prop:66} consists precisely of all the orbits
of $\cyc_{k+1} \cdot \sigma_{\pi}$ that are contained in 
$\{ 1, \ldots , k+1 \}$.
\end{proposition}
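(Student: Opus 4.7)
The plan is to show that every orbit $R$ of $\cyc_{k+1} \cdot \sigma_{\pi}$ which is contained in $\{1, \ldots , k+1\}$ must intersect $B_{\pi}$; by the very definition of $R_1, \ldots , R_p$ from Proposition \ref{prop:66}, this forces $R$ to appear in that list. Since $b_0 = k+1$ belongs to $B_{\pi}$, the orbit of $\cyc_{k+1} \cdot \sigma_{\pi}$ that contains $k+1$ automatically meets $B_{\pi}$, and I only need to treat the orbits $R$ that are entirely contained in $\{1, \ldots , k\}$.

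First I would observe that, since the blocks of $\pi$ partition $\{1,\ldots,k\}$, the complement $\{1, \ldots, k\} \setminus B_{\pi}$ equals precisely the set $\{a_i \mid 1 \leq i \leq \ell \text{ and } a_i < b_i\}$ of minima of the pair-blocks of $\pi$; any singleton block $V_i = \{a_i\}$ has its unique element $a_i = b_i$ already inside $B_{\pi}$. Next, I would argue by contradiction: suppose $R \subseteq \{1, \ldots, k\}$ is an orbit of $\cyc_{k+1} \cdot \sigma_{\pi}$ with $R \cap B_{\pi} = \emptyset$, so that $R \subseteq \{a_i \mid a_i < b_i\}$. Let $M := \max R$ and write $M = a_j$, where $V_j = \{a_j, b_j\}$ is a pair-block of $\pi$ with $a_j < b_j$. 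Then $\sigma_{\pi} (M) = b_j$, and using that $b_j \leq k$ we compute $(\cyc_{k+1} \cdot \sigma_{\pi})(M) = \cyc_{k+1} (b_j) = b_j + 1$. Being the image of $M$ under $\cyc_{k+1} \cdot \sigma_{\pi}$, the element $b_j + 1$ lies again in $R$; but $b_j + 1 > a_j = M$, contradicting the maximality of $M$. This establishes the proposition.

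The argument is quite direct, and I do not foresee a real obstacle here: the driving mechanism is that the forward-shift built into $\cyc_{k+1}$ forces any putative rogue orbit to contain an element strictly larger than its own maximum. The only care needed is in the setup, namely in correctly identifying the complement $\{1,\ldots,k\} \setminus B_{\pi}$ with the lower members of the pair-blocks of $\pi$, so that once $M = \max R$ is chosen, the pair-mate $b_j = \sigma_{\pi}(M)$ and hence $b_j + 1$ is well-defined and strictly exceeds $M$.
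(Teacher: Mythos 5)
Your proof is correct, and it works by a mechanism dual to the paper's. The paper also shows the nontrivial inclusion (every orbit inside $\{1,\ldots,k+1\}$ meets $B_{\pi}$), but it does so constructively: it sets $m := \min(R)$ and proves that the specific element $(\cyc_{k+1}\cdot\sigma_{\pi})^{-1}(m)$ lands in $B_{\pi}$, by ruling out $m-1$ being of the form $\max(V_i)$ (which would produce an element of $R$ smaller than $m$). You instead look at $M := \max(R)$, observe that an orbit disjoint from $B_{\pi}$ must consist entirely of pair-minima $a_i$, and then note that $(\cyc_{k+1}\cdot\sigma_{\pi})(M) = b_j + 1 > M$ escapes above the maximum — a clean contradiction using the forward map rather than its inverse. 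The two arguments are mirror images of each other (min and inverse vs. max and forward) and are of comparable length; yours has the small advantage of needing no case split on whether $m-1$ is a max or a min, while the paper's gives an explicit element of $R \cap B_{\pi}$, which is occasionally handy. One tiny gap: you should also note (as the paper does in one line) that conversely each $R_j$, having nonempty intersection with $B_{\pi} \subseteq \{1,\ldots,k+1\}$, is automatically contained in $\{1,\ldots,k+1\}$ since $\cyc_{k+1}\cdot\sigma_{\pi}$ fixes every $n > k+1$; this is needed for the ``consists precisely of'' claim and is immediate, but you did not say it.
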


\begin{proof} For every $1 \leq j \leq p$, we clearly have that
\[
R_j \cap B_{\pi} \neq \emptyset \ \Rightarrow
\ R_j \cap \{ 1, \ldots , k+1 \} \neq \emptyset
\ \Rightarrow \ R_j \subseteq \{ 1, \ldots , k+1 \}.
\]
It remains to show, conversely, that if $R$ is an orbit of
$\cyc_{k+1} \cdot \sigma_{\pi}$ such that
$R \subseteq \{1 , \ldots , k+1 \}$, then
$R \cap B_{\pi} \neq \emptyset$ (which then implies that $R$
is counted among $R_1, \ldots , R_p$).  We prove
$R \cap B_{\pi} \neq \emptyset$ by displaying a specific 
element of $R$ which is sure to be in $B_{\pi}$, as follows:
\begin{equation}   \label{eqn:67a}
\mbox{ Let $m := \min (R)$.  Then 
$\bigl( \cyc_{k+1} \cdot \sigma_{\pi} \bigr)^{-1} (m) \in B_{\pi}$.}
\end{equation}

For the proof of (\ref{eqn:67a}), we first dispose of the 
case when $m=1$, where we compute that
\[
\bigl( \cyc_{k+1} \cdot \sigma_{\pi} \bigr)^{-1} (1) 
= \sigma_{\pi} \bigl( \, \cyc_{k+1}^{-1} (1) \, \bigr) 
= \sigma_{\pi} (k+1) = k+1 = b_0 \in B_{\pi}.
\]

Suppose now that $R$ has $m := \min (R) > 1$.  
We claim that the number $m-1$ cannot be of the form 
$\max (V_i)$ for one of the blocks 
$V_1, \ldots , V_{\ell}$ of $\pi$.  Indeed, if that would be the
case, then the number 
$m' := \bigl( \cyc_{k+1} \cdot \sigma_{\pi} \bigr)^{-1} (m) \in R$
would come out as 
\[
m' = \sigma_{\pi} ( \cyc_{k+1}^{-1} (m) )
= \sigma_{\pi} (m-1) 
= \sigma_{\pi} ( \max (V_i) )
= \min (V_i);
\]
hence $m' = \min( V_i) \leq \max (V_i) = m-1 <m$, in 
contradiction with the assumption that $m$ is the minimum 
element of $R$.  We conclude that $m-1$ has to be of the form 
$\min (V_i)$ for an $1 \leq i \leq \ell$, when a calculation 
similar to the one above yields the desired conclusion:

\noindent
$\bigl( \cyc_{k+1} \cdot \sigma_{\pi} \bigr)^{-1} (m) 
= \sigma_{\pi} ( \min (V_i) ) = \max (V_i) = b_i \in B_{\pi}$.
\end{proof}

\vspace{6pt}

\begin{example}  \label{example:68}
It is instructive to see what happens when $\pi$ from 
Proposition \ref{prop:66} is a {\em non-crossing pair-partition}. 
This means that $k$ is even, $\pi$ has $\ell$ pairs 
$V_1 = \{ a_1, b_1 \}, \ldots , V_{\ell} = \{ a_{\ell}, b_{\ell} \}$
with $\ell = k/2$ and $a_1 < b_1, \ldots , a_{\ell} < b_{\ell}$,
and it is not possible to find some $1 \leq i,j \leq \ell$ such 
that $a_i < a_j < b_i < b_j$.  It is easy to see that
in this case $\tau_{\pi}$ is the identity permutation -- thus 
when looking at the orbits of $\tau_{\pi}$ that are contained in 
$\{ 1, \ldots , \ell + 1 \}$,  we will find $\ell + 1$ orbits,
each of them of cardinality 1.  Proposition \ref{prop:66} then 
gives us a non-trivial statement about the orbits of 
$\cyc_{k+1} \cdot \sigma_{\pi}$ that are contained in 
$\{ 1, \ldots , k+1 \}$: there are $\ell + 1$ such orbits
$R_1, \ldots , R_{\ell + 1}$, and
$| R_j \cap B_{\pi} | = 1$ for every $1 \leq j \leq \ell + 1$.
\end{example}

$\ $

\section{Moments of the limit law $\muw$, in terms of the 
permutations $\sigma_{\pi}$}

We now resume the discussion about the even moments of the limit
law $\muw$, from where it was left, in Equation (\ref{eqn:56a}) 
at the end of Section 5.  That equation uses the values of the 
character $\vvarphi$ on permutations $\tau_{\pi}$ with 
$\pi \in \cP_{\leq 2} (k)$.  
In this section we show how such a value 
$\vvarphi ( \tau_{\pi} )$  can be described by using the ``other'' 
permutation $\sigma_{\pi}$ associated to $\pi$, and then we record 
(cf.~Proposition \ref{prop:76})
what this has to say concerning the moments of $\muw$. 

\vspace{6pt}

\begin{proposition}   \label{prop:71}
Let $\pi \in \cP_{\leq 2} (k)$ and let 
$\sigma_{\pi}, \tau_{\pi} \in S_{\infty}$ be the permutations 
associated to $\pi$ that were discussed in the preceding sections.
One has
\begin{equation}   \label{eqn:71a}
\vvarphi ( \tau_{\pi} ) = 
\sum_{ \substack{  \ui : \{ 1, \ldots , k+1 \} \to \{ 1, \ldots, d \},   \\
                   \mathrm{constant \ under \ action}  \\
                   \mathrm{of} \ \cyc_{k+1} \cdot \sigma_{\pi} }  }
\ w_{\ui(b_0)} w_{\ui(b_1)} \cdots w_{\ui (b_{\ell})},
\end{equation}
where $k+1 = b_0 > b_1 > \cdots > b_{\ell} \geq 1$ and the cycle
$\cyc_{k+1} \in S_{k+1}$ are as in Proposition \ref{prop:66}, while
the weights $w_1 \geq w_2 \geq \cdots \geq w_d > 0$ are as in
Notation \ref{def:22}.
\end{proposition}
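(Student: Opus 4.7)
The plan is to evaluate both sides explicitly as products of power sums $\symp_n$ attached to the orbits of $\cyc_{k+1} \cdot \sigma_\pi$, and then observe that the two products are identical.

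First, I would compute the left-hand side. By the definition of $\vvarphi$ in Equation~(\ref{eqn:22c}),
\[
\vvarphi ( \tau_{\pi} ) \ = \ \prod_{\substack{V \text{ orbit of } \tau_\pi, \\ |V| \geq 2}} \symp_{|V|}.
\]
Since $\tau_\pi \in S_{\ell+1}$, every orbit of $\tau_\pi$ not contained in $\{1,\ldots,\ell+1\}$ is a singleton, and contributes nothing to the product. Moreover $\symp_1 = 1$, so I can freely include the size-$1$ orbits within $\{1,\ldots,\ell+1\}$ without changing the value. Thus $\vvarphi ( \tau_{\pi} )$ equals the product of $\symp_{|V|}$ over \emph{all} orbits $V$ of $\tau_\pi$ inside $\{1,\ldots,\ell+1\}$. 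Now I invoke Proposition~\ref{prop:66} (together with Proposition~\ref{prop:67}, which guarantees that $R_1,\ldots,R_p$ exhaust the orbits of $\cyc_{k+1} \cdot \sigma_\pi$ inside $\{1,\ldots,k+1\}$): the multiset of sizes of orbits of $\tau_\pi$ inside $\{1,\ldots,\ell+1\}$ is precisely $\{|R_1 \cap B_\pi|, \ldots, |R_p \cap B_\pi|\}$. Consequently
\[
\vvarphi ( \tau_{\pi} ) \ = \ \prod_{j=1}^{p} \symp_{|R_j \cap B_\pi|}.
\]

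Next I would unfold the right-hand side of (\ref{eqn:71a}). A function $\ui : \{1,\ldots,k+1\} \to \{1,\ldots,d\}$ which is constant on each orbit of $\cyc_{k+1} \cdot \sigma_\pi$ is uniquely determined by the tuple of values $(c_1,\ldots,c_p) \in \{1,\ldots,d\}^p$ which it takes on the orbits $R_1,\ldots,R_p$ (these being, by Proposition~\ref{prop:67}, all orbits in $\{1,\ldots,k+1\}$). For each $b \in B_\pi$ there is a unique $j(b) \in \{1,\ldots,p\}$ with $b \in R_{j(b)}$, and then $w_{\ui(b)} = w_{c_{j(b)}}$. Thus the product $w_{\ui(b_0)} \cdots w_{\ui(b_\ell)}$ factors as $\prod_{j=1}^p w_{c_j}^{|R_j \cap B_\pi|}$, and the whole sum factors:
\[
\sum_{c_1,\ldots,c_p=1}^{d} \ \prod_{j=1}^{p} w_{c_j}^{|R_j \cap B_\pi|} \ = \ \prod_{j=1}^{p} \Bigl( \sum_{c=1}^{d} w_c^{|R_j \cap B_\pi|} \Bigr) \ = \ \prod_{j=1}^{p} \symp_{|R_j \cap B_\pi|}.
\]

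Comparing the two computations finishes the proof. The work has essentially all been done by Propositions~\ref{prop:66} and~\ref{prop:67}; there is no real obstacle here, only the small bookkeeping point that one must use $\symp_1 = 1$ to align the product form of $\chi$ (which normally skips singleton orbits) with the unrestricted product over $R_1,\ldots,R_p$ that naturally emerges from the factorization on the right-hand side.
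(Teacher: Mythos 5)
Your proof is correct and follows essentially the same route as the paper's: evaluate both sides as $\prod_{j=1}^{p} \symp_{|R_j \cap B_\pi|}$, using Propositions \ref{prop:66} and \ref{prop:67} to match the orbit sizes of $\tau_\pi$ with the intersections $R_j \cap B_\pi$, and factoring the sum over orbit-constant tuples on the right-hand side. The only cosmetic difference is the order (you compute the left side first, the paper the right), and you make explicit the small $\symp_1 = 1$ bookkeeping point that the paper leaves implicit.
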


\begin{proof} Same as in Propositions \ref{prop:66} and 
\ref{prop:67}, let $R_1, \ldots , R_p$ be the orbits of
$\cyc_{k+1} \cdot \sigma_{\pi}$ which are contained in $\{ 1, \ldots , k+1 \}$.
All the tuples $\ui : \{ 1, \ldots , k+1 \} \to \{ 1, \ldots ,d \}$ 
considered on the right-hand side of (\ref{eqn:71a}) are obtained, without 
repetitions, by starting with an arbitrary tuple 
$\uj : \{ 1, \ldots , p \} \to \{ 1, \ldots , d\}$ and by putting
\[
\ui (h) = \uj(1) \mbox{ for } h \in R_1, \  \ldots ,
\ \ui (h) = \uj(p) \mbox{ for } h \in R_p.
\]
It follows that the right-hand side of (\ref{eqn:71a}) is equal to
\[
\sum_{  \uj : \{ 1, \ldots , p \} \to \{ 1, \ldots, d \} }
\ w_{\uj(1)}^{ |R_1 \cap B_{\pi}| } 
\cdots w_{\uj(p)}^{ |R_p \cap B_{\pi}| },
\]
where $B_{\pi} := 
\{ b_{\ell}, \ldots , b_1, b_0 \} \subseteq \{ 1, \ldots , k+1 \}$.
Moreover, the latter sum clearly factors as
\begin{equation}    \label{eqn:71b}
\Bigl( \, \sum_{j=1}^d 
\ w_j^{ |R_1 \cap B_{\pi}| } \, \Bigr)
\, \cdots \, \Bigl( \, \sum_{j=1}^d  w_j^{ |R_p \cap B_{\pi}| } \, \Bigr).
\end{equation}

Now, Proposition \ref{prop:66} assures us that the the orbits 
of $\tau_{\pi}$ that are contained in $\{ 1, \ldots , \ell + 1 \}$ can
be listed as $Q_1, \ldots , Q_p$, in such a way that we have
$| Q_i | = | R_i \cap B_{\pi} |$ for every $1 \leq i \leq p$. Since 
$\tau_{\pi}$ fixes every $m > \ell + 1$, the definition of $\vvarphi$
gives 
\[
\vvarphi ( \tau_{\pi} ) 
= \llambda_{|Q_1|} \cdots \llambda_{|Q_p|}
= \Bigl( \, \sum_{j=1}^d \ w_j^{ |Q_1| } \, \Bigr)
\, \cdots \,  \Bigl( \, \sum_{j=1}^d  w_j^{ |Q_p| } \, \Bigr).
\]
Upon comparing this with the product indicated in (\ref{eqn:71b}),
we see that $\vvarphi ( \tau_{\pi} )$ is indeed equal to the 
right-hand side of (\ref{eqn:71a}), as claimed.
\end{proof}

\vspace{6pt}

We next note that the formula (\ref{eqn:71a}) obtained above
can be re-written in a way which doesn't explicitly refer to $k+1$
-- this goes by replacing the occurrence of $\cyc_{k+1}$ on the 
right-hand side with an occurrence of the cycle 
$\cyc_k = (1, \ldots , k) \in S_k \subseteq S_{\infty}$.

\vspace{6pt}

\begin{corollary}   \label{cor:72}
In the same framework and notation as in Proposition \ref{prop:71},
one has
\begin{equation}   \label{eqn:72a}
\vvarphi ( \tau_{\pi} ) = 
\sum_{ \substack{  \ui : \{ 1, \ldots ,k \} \to \{ 1, \ldots , d \},  \\
                   \mathrm{constant \ under \ action}  \\
                   \mathrm{of} \ \cyc_k \cdot \sigma_{\pi} }  }
\ w_{\ui(1)} \cdot 
\bigl( w_{\ui(b_1)} \cdots w_{\ui (b_{\ell}) } \bigr) .
\end{equation}
\end{corollary}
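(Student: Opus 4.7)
The plan is to derive Corollary \ref{cor:72} directly from Proposition \ref{prop:71} by setting up a bijection between the indexing sets of the two sums. Since $b_0 = k+1$, the summand on the right-hand side of (\ref{eqn:71a}) can be rewritten as $w_{\ui(k+1)} \cdot w_{\ui(b_1)} \cdots w_{\ui(b_\ell)}$, so the task reduces to moving the domain of $\ui$ from $\{1,\ldots,k+1\}$ to $\{1,\ldots,k\}$ in a way that also turns the constraint ``constant under $\cyc_{k+1} \cdot \sigma_{\pi}$'' into ``constant under $\cyc_k \cdot \sigma_{\pi}$'', while replacing the factor $w_{\ui(k+1)}$ by $w_{\ui(1)}$.

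The key step is a comparison of orbit structures. Since $\sigma_{\pi} \in S_k$ fixes $k+1$, we have $(\cyc_{k+1} \cdot \sigma_{\pi})(k+1) = \cyc_{k+1}(k+1) = 1$. On the other hand, for any $h \in \{1,\ldots,k\}$, the values $(\cyc_{k+1} \cdot \sigma_{\pi})(h)$ and $(\cyc_k \cdot \sigma_{\pi})(h)$ coincide unless $\sigma_{\pi}(h) = k$, in which case $\cyc_{k+1}$ sends $k$ to $k+1$ whereas $\cyc_k$ sends $k$ to $1$. Following an orbit of $\cyc_k \cdot \sigma_{\pi}$, the exceptional case $\sigma_{\pi}(h) = k$ occurs only at the step that lands at $1$, so the conclusion is that the orbits of $\cyc_{k+1} \cdot \sigma_{\pi}$ on $\{1,\ldots,k+1\}$ are identical to the orbits of $\cyc_k \cdot \sigma_{\pi}$ on $\{1,\ldots,k\}$, except that the orbit containing $1$ has the element $k+1$ inserted into it just before $1$ in the cyclic order.

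From this orbit comparison it follows immediately that a tuple $\ui : \{1,\ldots,k+1\} \to \{1,\ldots,d\}$ is constant on the orbits of $\cyc_{k+1} \cdot \sigma_{\pi}$ if and only if its restriction to $\{1,\ldots,k\}$ is constant on the orbits of $\cyc_k \cdot \sigma_{\pi}$ and $\ui(k+1) = \ui(1)$. Restriction thus gives a bijection between the two indexing sets, and under this bijection $w_{\ui(k+1)} = w_{\ui(1)}$, so the summand of (\ref{eqn:71a}) is transported exactly to the summand of (\ref{eqn:72a}). Substituting yields the stated formula.

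The only delicate point is the orbit claim itself; while elementary, it requires a careful case-by-case check of what $\cyc_{k+1}$ and $\cyc_k$ do at their single point of disagreement (the value $k$), and of how this single discrepancy affects exactly one orbit. This is the main place where details have to be written out; everything else in the argument is bookkeeping.
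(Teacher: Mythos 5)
Your proof is correct and takes essentially the same route as the paper's: both arguments pivot on the single point of discrepancy at $a_1 = \sigma_\pi(k)$ (the paper phrases this as $\cyc_{k+1}\sigma_\pi$ sending $a_1 \mapsto k+1 \mapsto 1$ while $\cyc_k\sigma_\pi$ sends $a_1 \mapsto 1$ directly), deduce that restriction to $\{1,\ldots,k\}$ is a bijection between the two index sets with $\ui(k+1)=\ui(1)$ forced, and substitute. Your phrasing in terms of orbit structure just makes explicit the verification that the paper leaves to the reader.
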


\begin{proof} Let us denote
\[
\cI := \bigl\{ \ui : \{ 1, \ldots ,k+1 \} \to \{ 1, \ldots , d \}
\mid \ui \mbox{ is constant under the action of}
\ \cyc_{k+1} \cdot \sigma_{\pi} \bigr\},
\mbox{ and}
\]
\[
\cJ := \bigl\{ \uj : \{ 1, \ldots ,k \} \to \{ 1, \ldots , d \}
\mid \uj \mbox{ is constant under the action of} 
\ \cyc_k \cdot \sigma_{\pi} \bigr\}. 
\]
Moreover, for every 
$\ui : \{ 1, \ldots ,k+1 \} \to \{ 1, \ldots , d \}$ we will use 
the notation ``$r( \ui )$'' for the restriction of $\ui$ to 
$\{ 1, \ldots , k \}$. 

For the partition $\pi \in \cP_{\leq 2} (k)$ we are working with,
recall that we denote $a_1 = \min (V_1)$, where $V_1$ is the block of
$\pi$ which contains the number $k$.  That is: either $V_1$ is a singleton
block at $k$ and $a_1 = b_1 =k$, or $V_1$ is a pair $\{a_1, b_1\}$
with $a_1 < b_1 = k$.  Direct inspection shows that the permutation 
$\cyc_{k+1} \circ \sigma_{\pi}$ sends $a_1 \mapsto k+1 \mapsto 1$, while
$\cyc_k \circ \sigma_{\pi}$ sends directly $a_1 \mapsto 1$. (This includes the 
possibility that $a_1 =1$, when $\cyc_{k+1} \circ \sigma_{\pi}$ swaps 
$1$ with $k+1$, while $\cyc_k \circ \sigma_{\pi}$ has $1$ as a fixed point.)
As a consequence, every $\ui \in \cI$ has $\ui (a_1) = \ui (k+1) = \ui (1)$,
and every $\uj \in \cJ$ has $\uj (a_1) = \uj (1)$.  A further consequence 
of this observation is that the following statements (a) and (b) are holding.

\vspace{6pt}

\noindent
(a)  If $\ui \in \cI$, then $r( \ui ) \in \cJ$.  It thus 
makes sense to consider the map $r : \cI \to \cJ$.

\vspace{6pt}

\noindent
(b)  The restriction map $r : \cI \to \cJ$ defined 
in (a) is bijective.

\vspace{6pt}

The verifications needed in order to obtain (a) and (b) are
straightforward.  For instance: for checking that the map $r$ 
is surjective we start with a tuple $\uj \in \cJ$ and check that
\[
\ui (h) := \left\{   \begin{array}{ll}
\uj (h), & \mbox{ if $1 \leq h \leq k$} \\
\uj (1), & \mbox{ if $h=k+1$} 
\end{array}   \right\} \mbox{ for $1 \leq h \leq k+1$}
\]
defines a tuple $\ui \in \cI$ such that $r( \ui ) = \uj$.  We 
leave the verification details (for this and for the other facts 
that come up when checking (a)+(b)) as an exercise to the reader.

Now let us use the bijection $r : \cI \to \cJ$ as a change of variable
in the summation on the right-hand side of (\ref{eqn:71a}).  The term
indexed by $\ui \in \cI$ in the said summation is
\[
w_{\ui(b_0)} \cdot  \bigl(
w_{\ui(b_1)} \cdots w_{\ui (b_{\ell})} \bigr)
= w_{\ui(1)} \cdot  \bigl(
w_{\ui(b_1)} \cdots w_{\ui (b_{\ell})} \bigr)
= w_{\uj(1)} \cdot  \bigl(
w_{\uj(b_1)} \cdots w_{\uj (b_{\ell})} \bigr),
\]
where at the first equality sign we used the fact that
$\ui (b_0) = \ui (k+1) = \ui (1)$, and at the second equality 
sign we put $\uj = r( \ui ) \in \cJ$.  Thus the change of variable 
provided by $r$ converts the summation from the right-hand side of
(\ref{eqn:71a}) into the summation on the right-hand side of 
(\ref{eqn:72a}), as required.
\end{proof}

\vspace{6pt}

\begin{remark}    \label{rem:73}
The next corollary records what comes out when the expression for 
$\vvarphi ( \tau_{\pi} )$ that was just obtained is replaced back 
in the formula (\ref{eqn:56a}) for an even moment of the 
limit law $\muw$.  When stating the corollary, we will make two adjustments
in the notation.

\vspace{6pt}

\noindent
$1^o$ The fact that $\ui$ is constant under the action of 
$\cyc_k \cdot \sigma_{\pi}$ can be written for short as
\begin{equation}   \label{eqn:73a}
\ui \circ ( \cyc_k \cdot \sigma_{\pi} ) = \ui, 
\end{equation}
with the slight abuse of notation that 
$\cyc_k \cdot \sigma_{\pi} \in S_k \subseteq S_{\infty}$ is now treated
as a function from $\{ 1, \ldots , k \}$ to itself (while the tuple 
$\ui$ is viewed as a function from $\{ 1, \ldots ,k \}$ to 
$\{ 1, \ldots , d \}$).  We note, moreover, that upon composing
with $\sigma_{\pi}$ on the right in (\ref{eqn:73a}) and upon taking
into account that $\sigma_{\pi}^2$ is the identity permutation, we 
can equivalently re-write (\ref{eqn:73a}) as 
\begin{equation}   \label{eqn:73b}
\ui \circ \cyc_k = \ui \circ \sigma_{\pi}. 
\end{equation}

\vspace{3pt}

\noindent
$2^o$  In order to better keep in mind that the numbers 
$b_1, \ldots , b_{\ell}$ depend on $\pi$, we will re-write the product
$w_{\ui (b_1)} \cdots w_{\ui (b_{\ell}) }$ that appeared in 
(\ref{eqn:72a}) in the form
$\prod_{V \in \pi} w_{ \ui ( \max (V) ) }$.
\end{remark}

\vspace{6pt}

\begin{corollary}   \label{cor:74}
For an even $k \in \bN$, the moment
$\int_{\bR} t^k \, d \muw (t)$ is equal to
\begin{equation}   \label{eqn:74a}
\sum_{\ui : \{ 1, \ldots , k \} \to \{ 1, \ldots , d \} } 
\Bigl( \sum_{ \substack{  \pi \in \cP_{\leq 2} (k), \\
          \mathrm{with} \ \ui \circ \cyc_k = \ui \circ \sigma_{\pi} } }
(-1)^{|\pi_1 |/2} \cdot ( |\pi|_1 - 1)!! \cdot
w_{\ui (1)} \cdot \prod_{V \in \pi}  w_{\ui (\max (V))} \Bigr) .
\end{equation}
\end{corollary}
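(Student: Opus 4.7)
The plan is to chain together Proposition \ref{prop:56} and Corollary \ref{cor:72}, together with the cosmetic rewriting from Remark \ref{rem:73}, and then swap the order of summation. No new idea is required; the work is purely bookkeeping.

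First I would recall from Proposition \ref{prop:56} that the $k$-th moment of $\muw$ equals
\[
\sum_{\pi \in \cP_{\leq 2}(k)} (-1)^{|\pi|_1/2} \cdot (|\pi|_1 - 1)!! \cdot \vvarphi(\tau_\pi).
\]
Next I would substitute the expression for $\vvarphi(\tau_\pi)$ furnished by Corollary \ref{cor:72}, namely
\[
\vvarphi(\tau_\pi) = \sum_{ \substack{\ui : \{1,\ldots,k\} \to \{1,\ldots,d\} \\ \ui \circ (\cyc_k \cdot \sigma_\pi) = \ui}} w_{\ui(1)} \cdot \bigl( w_{\ui(b_1)} \cdots w_{\ui(b_\ell)} \bigr),
\]
with the $b_i$'s determined by $\pi$ as in Notation \ref{def:61}.

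At this point I would invoke the two cosmetic adjustments spelled out in Remark \ref{rem:73}: rewrite the constraint ``$\ui$ is constant on the orbits of $\cyc_k \cdot \sigma_\pi$'' in the equivalent form $\ui \circ \cyc_k = \ui \circ \sigma_\pi$ (using that $\sigma_\pi^2$ is the identity of $S_k$), and rewrite the product $w_{\ui(b_1)} \cdots w_{\ui(b_\ell)}$ as $\prod_{V \in \pi} w_{\ui(\max(V))}$, which is purely a change of indexing since by construction $b_1, \ldots, b_\ell$ are exactly the maxima of the blocks of $\pi$. Finally, I would interchange the two finite summations, bringing the sum over $\ui$ outside and grouping the sum over $\pi \in \cP_{\leq 2}(k)$ inside, restricted by the compatibility condition $\ui \circ \cyc_k = \ui \circ \sigma_\pi$. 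The resulting expression is exactly the right-hand side of (\ref{eqn:74a}).

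The only step that requires any genuine care, rather than being pure transcription, is verifying the equivalence between ``$\ui$ is constant on orbits of $\cyc_k \cdot \sigma_\pi$'' and the identity $\ui \circ \cyc_k = \ui \circ \sigma_\pi$; but this is the short calculation already indicated in Remark \ref{rem:73}.1, and I would present it explicitly in one line. Thus I do not anticipate any real obstacle, and the proof should fit into a short paragraph.
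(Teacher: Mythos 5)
Your proposal is correct and is essentially identical to the paper's own proof: the paper also simply plugs the formula of Corollary \ref{cor:72} (as reformatted by Remark \ref{rem:73}) into Proposition \ref{prop:56} and records the resulting double sum as an iterated sum with the sum over $\ui$ outside. Your one-line justification of the rewrite $\ui\circ(\cyc_k\cdot\sigma_\pi)=\ui \Leftrightarrow \ui\circ\cyc_k=\ui\circ\sigma_\pi$ via $\sigma_\pi^2=\mathrm{id}$ matches Remark \ref{rem:73}.1 exactly.
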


\begin{proof}  Upon plugging the formula (\ref{eqn:72a}) into
(\ref{eqn:56a}), we find $\int_{\bR} t^k \, d \mu (t)$ written 
as a double sum indexed by
\[
\Bigl\{ ( \pi, \ui ) \ \vline \ \pi \in \cP_{\leq 2} (k) 
\mbox{ and } \ui : \{ 1, \ldots , k \} \to \{ 1, \ldots , d \},
\mbox{ such that } 
\ui \circ \cyc_k = \ui \circ \sigma_{\pi} \Bigr\}.
\]
In (\ref{eqn:74a}) we simply record this double sum, in the 
form of an iterated sum.
\end{proof}

\vspace{6pt}

For a further bit of processing of the moment formula obtained 
in (\ref{eqn:74a}), we observe that the double factorial appearing
in that formula can be made to disappear, if we do the following
trick: merge in pairs the singleton blocks of the partition $\pi$,
and use a colouring of the pairs of the resulting pair-partition.
We thus proceed as follows.

\vspace{6pt}

\begin{notation-and-remark}    \label{def:75}
{\em (Bicoloured pair-partitions.)}
$1^o$ We will use the name {\em bicoloured pair-partition}
for a pair-partition $\rho$ of $\{ 1, \ldots , k\}$ where every 
pair of $\rho$ was painted in either blue or red.  The set of all
bicoloured pair-partitions of $\{ 1, \ldots , k \}$ will be denoted
by $\cP_2^{(b-r)} (k)$ (with the usual
convention that 
$\cP_2^{(b-r)} (k) = \emptyset$
for $k$ odd).

\vspace{3pt} 

\noindent
$2^o$ For $k$ even, we will use a ``red-pair-breaking'' map
$\cP_2^{(b-r)} (k) \ni \rho \mapsto \pi \in \cP_{\leq 2} (k)$
described as follows: $\pi$ is obtained by taking every red block
of $\rho$ and breaking it into two singleton blocks.  It is useful 
to note that, for every $\pi \in \cP_{\leq 2} (k)$, the pre-image
of $\pi$ under this red-pair-breaking map has cardinality equal to 
$( | \pi |_1 - 1)!!$.  The double factorial appears because 
we are counting the number of ways of grouping the $| \pi |_1$ 
singleton blocks of $\pi$ into $|\pi|_1 /2$ red pairs of a 
bicoloured pair-partition $\rho$.

\vspace{3pt}

\noindent
$3^o$ For $k$ even and $\rho \in \cP_2^{(b-r)} (k)$
we will denote
$\sigma^{\mathrm{(blue)}}_{\rho} := 
\prod_{ \substack{  \{ a, b \} \ \mathrm{blue}  \\
        \mathrm{pair} \ \mathrm{of} \ \rho } } \ (a,b) \in S_k$.
\end{notation-and-remark}

\vspace{6pt}

The formula obtained in Corollary \ref{cor:74} is then 
further processed as follows.

\vspace{6pt}

\begin{proposition}   \label{prop:76}
For every $k \in \bN$, the moment
$\int_{\bR} t^k \, d \muw (t)$ is equal to
\begin{equation}   \label{eqn:76a}
\sum_{\ui : \{ 1, \ldots , k \} \to \{ 1, \ldots , d \} }
w_{\ui (1)}
\Bigl(  \sum_{ \substack{ \rho \in \cP_2^{(b-r)} (k), \\
               \mathrm{with} \ \ui \circ \cyc_k
                     = \ui \circ \sigma^{\mathrm{blue}}_{\rho} } }
\, \prod_{ \substack{ \{ p,q \} \in \rho, \\ 
                  \mathrm{blue,} \ p<q } }  w_{\ui (q)} \cdot
\prod_{ \substack{ \{ p,q \} \in \rho, \\ \mathrm{red,} \ p<q } }
(- w_{\ui (p)} w_{\ui (q)} ) \,  \Bigr) .
\end{equation}
\end{proposition}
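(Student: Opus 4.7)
The plan is to derive the formula in Proposition \ref{prop:76} from Corollary \ref{cor:74} by a reindexing based on the red-pair-breaking map of Notation and Remark \ref{def:75}.2. The fiber of this map above a given $\pi \in \cP_{\leq 2}(k)$ consists precisely of all $\rho \in \cP_2^{(b-r)}(k)$ whose blue pairs coincide with the pairs of $\pi$ and whose red pairs provide an arbitrary pairing of the $|\pi|_1$ singleton blocks of $\pi$; this fiber has cardinality $(|\pi|_1 - 1)!!$, which is exactly the double factorial appearing in \eqref{eqn:74a}. Summing over pairs $(\pi, \text{grouping of singletons into red pairs})$ is therefore the same as summing over $\rho \in \cP_2^{(b-r)}(k)$.

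First, I would fix such a pair $(\pi, \rho)$ and observe that, by Notation \ref{def:61}, the permutation $\sigma_\pi$ is the product of transpositions over the pairs of $\pi$, which are exactly the blue pairs of $\rho$. Thus $\sigma_\pi = \sigma^{\mathrm{(blue)}}_\rho$, and the constraint $\ui \circ \cyc_k = \ui \circ \sigma_\pi$ in Corollary \ref{cor:74} matches the constraint $\ui \circ \cyc_k = \ui \circ \sigma^{\mathrm{(blue)}}_\rho$ in Proposition \ref{prop:76}.

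Next, I would decompose the product $\prod_{V \in \pi} w_{\ui(\max V)}$ according to the type of block: each pair $V = \{p,q\}$ of $\pi$ with $p < q$ (which is a blue pair of $\rho$) contributes the factor $w_{\ui(q)}$, while each singleton $V = \{a\}$ of $\pi$ contributes $w_{\ui(a)}$. Regrouping singletons according to the red pair of $\rho$ they belong to, a red pair $\{p,q\}$ with $p < q$ contributes the factor $w_{\ui(p)} w_{\ui(q)}$. The sign $(-1)^{|\pi|_1 / 2}$ equals $(-1)^{\#\text{red pairs}}$ and gets absorbed into the product as
\[
\prod_{\substack{\{p,q\} \in \rho, \\ \mathrm{red,} \ p < q}} (-w_{\ui(p)} w_{\ui(q)}).
\]

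Finally, substituting these identifications into \eqref{eqn:74a}, exchanging the order of summation so that $\ui$ is outermost, and pulling out the $\ui$-dependent factor $w_{\ui(1)}$ yields exactly the expression on the right-hand side of \eqref{eqn:76a}. No real obstacle arises: all identifications are elementary, and the only thing to verify carefully is that every $\rho \in \cP_2^{(b-r)}(k)$ is produced exactly once by a unique pair $(\pi, \text{matching of singletons})$, which is immediate from the definition of red-pair-breaking.
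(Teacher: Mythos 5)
Your proposal is correct and follows exactly the approach the paper has in mind: the paper's proof of Proposition \ref{prop:76} simply states that the formula follows from Corollary \ref{cor:74} via the red-pair-breaking surjection of Remark \ref{def:75}.2, and your write-up is a faithful expansion of that one-line argument (identifying $\sigma_{\pi}$ with $\sigma^{\mathrm{blue}}_{\rho}$, matching the double factorial with the fiber size, and absorbing the sign into the red-pair factors). The only cosmetic slip is the mention of ``exchanging the order of summation so that $\ui$ is outermost'' --- $\ui$ is already outermost in Equation \eqref{eqn:74a} --- but this has no effect on the argument.
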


\begin{proof}  For $k$ odd, both quantities involved are equal
to $0$.  For $k$ even, the expression in (\ref{eqn:76a}) is obtained 
from the one in (\ref{eqn:74a}) when we use the surjection 
$\cP_2^{(b-r)} (k) \to \cP_{\leq 2} (k)$ indicated in 
Remark \ref{def:75}.2.
\end{proof}

$\ $

\section{Wick-style formulas and the CCR-GUE model 
for the law $\muw$}

In this section we examine the CCR-GUE matrix $M$ that was 
introduced in Definition \ref{def:28}, and we give the proof 
of the main result of the paper, Theorem \ref{thm:29}.  This
theorem claims that the law of $M$ (considered in the natural
$*$-probability space where $M$ lives) coincides with the 
limit law $\muw$ obtained in Theorem \ref{thm:25}.  

The next remark gives an outline of how the section is 
organised.

\vspace{6pt}

\begin{remark}    \label{rem:81}
Recall from Definition \ref{def:28} that the entries 
$\{ a_{i,j} \mid 1 \leq i,j \leq d \}$ of $M$ are for the most
part commuting, as they are drawn from a family of commuting 
independent subalgebras 
\begin{equation}    \label{eqn:81a}
\{ \cA_o \} \cup \{ \cA_{i,j} \mid 1 \leq i < j \leq d \}
\end{equation}
of $\cA$.  The notion of ``commuting independent subalgebras'' 
is reviewed in Definition \ref{def:82}.1 below.  Note that this 
notion does not require an individual subalgebra $\cA_{i,j}$ 
to be commutative -- and in fact $\cA_{i,j}$ will surely not be
commutative if the weights $w_i, w_j$ are distinct.

Concerning the algebra $\cA_o \subseteq \cA$, where we pick the 
diagonal entries $a_{1,1}, \ldots , a_{d,d}$ of $M$: this one can
be assumed to be commutative, and the formula used for computing 
joint moments of $a_{1,1}, \ldots , a_{d,d}$ is the usual Wick 
formula for real Gaussian random variables. We review this Wick
formula in Definition \ref{def:82}.2.

On the other hand: for $1 \leq i < j \leq d$ we will derive a
CCR-analogue of the Wick-formula for a complex Gaussian random 
variable, which allows us to compute the $*$-moments of 
$a_{i,j} \in \cA_{i,j}$.  The precise statement of how this goes 
is given in Proposition \ref{prop:83}.  

Together with the commuting independence of the subalgebras from
(\ref{eqn:81a}), the two Wick formulas (usual version and CCR-version)
will then give us a Wick-style summation formula for a general joint
moment of the full family of entries $a_{i,j}$ of $M$.  This formula
is derived in Proposition \ref{prop:84}.  

Finally, the proof of Theorem \ref{thm:29} is easily obtained by
comparing the moments of the law of $M \in M_d ( \cA )$ (as they 
come out when we use the Wick-style formula of 
Proposition \ref{prop:84}) against the description of moments of 
$\muw$ that was obtained in Proposition \ref{prop:76}.
\end{remark}

\vspace{6pt}

\begin{definition}     \label{def:82}
{\em (Review of some basic notions needed below.)}

\noindent
$1^o$ {\em Commuting independent subalgebras.}
Let $( \cA , \varphi )$ be a $*$-probability space and let 
$( \cA_{\lambda} )_{\lambda \in \Lambda}$ be a family of 
unital $*$-subalgebras of $\cA$.  We say that the $\cA_{\lambda}$'s
are {\em commuting independent} to mean that the following two 
conditions (i) + (ii) are fulfilled:

\vspace{3pt}

(i) For every $\lambda_1 \neq \lambda_2$ in $\Lambda$ and every 
$x_1 \in \cA_{\lambda_1}$, $x_2 \in \cA_{\lambda_2}$, one has that
$x_1 x_2 = x_2 x_1$.

\vspace{3pt}

(ii) Whenever $\lambda_1, \ldots , \lambda_k \in \Lambda$ are distinct
($\lambda_i \neq \lambda_j$ for $1 \leq i < j \leq k$), one has
\[
\varphi ( x_1 x_2 \cdots x_k) = \varphi (x_1) \varphi (x_2) 
\cdots \varphi (x_k), \ \ \forall \, x_1 \in \cA_{\lambda_1}, 
\ldots , x_k \in \cA_{\lambda_k}.
\]

\vspace{3pt}

\noindent
$2^o$ {\em Gaussian family of selfadjoints, via the Wick formula.}
Let $(\cA , \varphi )$ be a $*$-probability space where the 
algebra $\cA$ is commutative.  Suppose we are given a family
$x_1, \ldots , x_d$ of selfadjoint elements of $\cA$, and 
a symmetric matrix $C = [c_{i,j}]_{i,j=1}^d \in M_d ( \bR )$.
We say that $x_1, \ldots , x_d$ form a 
{\em centred Gaussian family with covariance matrix $C$} 
to mean that the following formula for computation of joint 
moments is holding: for every $k \geq 1$ and every tuple 
$\ui : \{ 1, \ldots , k \} \to \{ 1, \ldots , d \}$, one has
\begin{equation}    \label{eqn:82a}
\varphi ( x_{\ui (1)} \cdots x_{\ui (k)} )
= \sum_{\rho \in \cP_2 (k)}  \prod_{\{ p,q \} \in \rho}
c_{\ui (p), \ui (q)},
\end{equation}
with the usual convention that for $k$ odd the right-hand 
side of (\ref{eqn:82a}) is to be read as ``$0$''.
\end{definition}

\vspace{6pt}

\begin{proposition}   \label{prop:83}
{\em (Wick's lemma for a CCR-complex-Gaussian element.)}

\noindent
Let $( \cA , \varphi )$ be a $*$-probability space and suppose 
that $a \in \cA$ is a centred CCR-complex-Gaussian element
with parameters $\wx_{(1,*)}, \wx_{(*,1)} \in (0, \infty)$,
in the sense of Definition \ref{def:26}.
This means that $a$ and $a^{*}$ satisfy the commutation relation
(\ref{eqn:26a}), and that the expectations 
$\varphi ( a^p (a^{*})^q )$ are evaluated according to the formula
(\ref{eqn:26b}) from that definition.  We also put,
for convenience, $\wx_{(1,1)} = \wx_{(*,*)} = 0$.

\noindent
Then, for every $k \in \bN$ and 
$( \ee (1), \ldots , \ee (k) ) \in \{ 1,* \}^k$, one has
\begin{equation}    \label{eqn:83a}
\varphi \bigl( a^{\ee (1)} \cdots a^{\ee (k)} \bigr) 
= \sum_{\rho \in \cP_2 (k)}
\prod_{ \substack{ \{ p,q \} \in \rho,  \\  \mathrm{with} \ p<q} }
\wx_{( \ee (p), \ee (q))},
\end{equation}
with the usual convention that for $k$ odd the right-hand 
side of (\ref{eqn:83a}) is to be read as ``$0$''.
\end{proposition}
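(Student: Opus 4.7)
My plan is to prove (\ref{eqn:83a}) by showing that both sides transform identically when the commutation relation (\ref{eqn:26a}) is used to re-order $a^{\ee(1)} \cdots a^{\ee(k)}$, and that they already agree on normal-ordered monomials $a^p(a^*)^q$, where $\varphi$ is prescribed by (\ref{eqn:26b}).

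First I would dispose of the degenerate cases. Repeated use of (\ref{eqn:26a}) in the form $aa^* = a^*a + (\wx_{(1,*)} - \wx_{(*,1)})\oneA$ reduces any word $a^{\ee(1)} \cdots a^{\ee(k)}$ to a $\bC$-linear combination of normal-ordered monomials $a^{p'}(a^*)^{q'}$, and along such reductions $p' - q'$ remains equal to the excess of $1$-entries over $*$-entries in $\ee$. Hence if $k$ is odd, or if $\ee$ has an unequal number of $1$'s and $*$'s, the left-hand side of (\ref{eqn:83a}) vanishes by (\ref{eqn:26b}); on the right-hand side, every $\rho \in \cP_2(k)$ must then contain a pair $\{p, q\}$ with $\ee(p) = \ee(q)$, contributing $\wx_{(1,1)} = 0$ or $\wx_{(*,*)} = 0$, so the right-hand side also vanishes. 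For a nondegenerate normal-ordered monomial $a^p (a^*)^p$, the pair-partitions contributing nontrivially are precisely the $p!$ bijective matchings between the $1$-positions $\{1, \ldots, p\}$ and the $*$-positions $\{p+1, \ldots, 2p\}$, each contributing $\wx_{(1,*)}^p$; this gives exactly $p!\,\wx_{(1,*)}^p = \varphi(a^p(a^*)^p)$.

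The main step is to show that the functional $\Phi(\ee) := \sum_{\rho \in \cP_2(k)} \prod_{\{p,q\} \in \rho,\, p<q} \wx_{(\ee(p), \ee(q))}$ satisfies the combinatorial identity $\Phi(\ee) = \Phi(\ee') + c\,\Phi(\widehat\ee)$, where $c := \wx_{(1,*)} - \wx_{(*,1)}$, $\ee'$ is $\ee$ with the entries at adjacent positions $i, i+1$ swapped (assuming $\ee(i)=1$ and $\ee(i+1)=*$), and $\widehat\ee$ is $\ee$ with both positions deleted. This mirrors the algebraic identity $a^{\ee(1)} \cdots a^{\ee(k)} = a^{\ee'(1)} \cdots a^{\ee'(k)} + c \cdot a^{\widehat\ee(1)} \cdots a^{\widehat\ee(k-2)}$ in $\cA$ that comes from (\ref{eqn:26a}). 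To prove it, I split $\cP_2(k)$ into Case~A (partitions containing $\{i, i+1\}$ as a block) and Case~B (the rest). Case~A produces the correction: the block $\{i, i+1\}$ contributes $\wx_{(1,*)}$ to $\Phi(\ee)$ versus $\wx_{(*,1)}$ to $\Phi(\ee')$, while the remaining pairs range over $\cP_2(\{1, \ldots, k\} \setminus \{i, i+1\})$ and reproduce exactly $\Phi(\widehat\ee)$, giving total difference $c\,\Phi(\widehat\ee)$. In Case~B the involution $\rho \mapsto s\rho$ obtained by relabeling $i \leftrightarrow i+1$ is a self-bijection; a short sub-case check (depending on whether the partners of $i$ and $i+1$ lie $<i$, $=i$, $=i+1$, or $>i+1$) shows that in Case~B the swap $s = (i, i+1)$ always preserves the strict ordering $p<q$ within each block, so the contribution of $\rho$ to $\Phi(\ee)$ equals the contribution of $s\rho$ to $\Phi(\ee')$ term by term, and Case~B contributes nothing to $\Phi(\ee) - \Phi(\ee')$.

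With the commutation-consistency identity in hand, the same sequence of substitutions that brings $a^{\ee(1)} \cdots a^{\ee(k)}$ to normal form in $\cA$ also expresses $\Phi(\ee)$ as the matching linear combination of $\Phi$-values on normal-ordered sequences; since $\varphi$ and $\Phi$ coincide there, they coincide on the original word, proving (\ref{eqn:83a}). The main obstacle is the Case~B book-keeping: one must carefully verify that the relabeling $s$ never reverses the order inside a pair and that the product $\wx_{(\ee(p), \ee(q))}$ is transported precisely onto $\wx_{(\ee'(s(p)), \ee'(s(q)))}$ in every sub-case, so that the claimed bijective cancellation is genuine rather than only formal.
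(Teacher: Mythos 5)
Your argument is essentially the paper's own: both reduce to normal-ordered monomials via repeated use of the commutation relation (the paper organizes this as an induction on the number of $(*,1)$-inversions, you as "the same sequence of substitutions") and both verify that the Wick sum $\Phi$ satisfies the matching linear recursion, splitting according to whether the adjacent positions form a pair in $\rho$. In fact your Case~B involution (relabeling $i \leftrightarrow i+1$ and checking that this swap preserves the within-pair ordering $p<q$ on every block other than $\{i,i+1\}$) makes explicit a step the paper leaves implicit --- namely that the portion of the Wick sum over pair-partitions \emph{not} containing the adjacent pair takes the same value whether computed with $\varepsilon$ or with its adjacent transpose --- which is needed for the paper's second equation in (8.3f) since $\widetilde{\Sigma}$ there is defined only with $\varepsilon_o$; so your write-up is, if anything, slightly more complete.
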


\begin{proof}  We will proceed by induction on the number of 
``$(*,1)$-inversions'' $\mathrm{inv} ( \uee )$ in a tuple 
$\uee \in \sqcup_{k=1}^{\infty} \{ 1,* \}^k$, where for 
$k \geq 1$ and 
$\uee = ( \ee (1), \ldots, \ee (k) ) \in \{ 1,* \}^k$
we denote
\[
\mathrm{inv} ( \uee ) := \ \vline \
\bigl\{ (p,q) \mid 1 \leq p < q \leq k, \, \ee (p) = *,
\, \ee (q) = 1 \bigr\} \ \vline \, .
\]

\vspace{6pt}

The base-case of the induction is the one when
$\mathrm{inv} ( \uee ) = 0$.  This refers to the situation 
when there is no instance of a $*$ preceding a $1$ among the
components of $\uee$, hence when $\uee$ is of the form 
$(1, \ldots , 1, *, \ldots , *)$.  In this case, the left-hand
side of (\ref{eqn:83a}) is of the form 
$\varphi ( a^u \, (a^{*})^v )$ for some $u,v \geq 0$ with 
$u+v = k$, and is therefore explicitly described by 
Equation (\ref{eqn:26b}).  It is an easy counting exercise,
left to the reader, to check that in this special situation 
the right-hand side of Equation (\ref{eqn:83a}) also matches 
the formula indicated in Equation (\ref{eqn:26b}). 

\vspace{6pt}

For the rest of the proof we work on the induction step of the
argument.  We thus fix an $\ell \geq 1$, we assume that 
(\ref{eqn:83a}) holds for all the tuples 
$\uee \in \sqcup_{k=1}^{\infty} \{ 1,* \}^k$ which have 
$\mathrm{inv} ( \uee ) \leq \ell -1$, and we prove that it also 
holds for tuples $\uee$ which have $\mathrm{inv} ( \uee ) = \ell$.
To this end, we consider a tuple
$\uee_o = ( \ee_o (1), \ldots, \ee_o (k) )$
with $\mathrm{inv} ( \uee_o ) = \ell$.
This particular $\uee_o$ is also fixed for the rest of the 
proof, and our goal is to verify that (\ref{eqn:83a}) holds 
for it.  We will run the verification by assuming that
the length $k$ of $\uee_o$ is such that $k \geq 3$ (if $k \leq 2$,
then (\ref{eqn:83a}) follows from immediate calculations of 
moments of order 1 and 2).

Since $\mathrm{inv} ( \uee_o ) = \ell > 0$, there has to exist a 
$j \in \{ 1, \ldots , k-1 \}$ such that $\ee_o (j) = *$ and 
$\ee_o (j+1) = 1$.  That is, the monomial 
\begin{equation}    \label{eqn:83b}
a^{\uee_o} := a^{\ee_o (1)} \cdots a^{\ee_o (k)}
\end{equation}
has an $a^{*}$ on position $j$ which is immediately followed by an 
$a$ on position $j+1$.  Let $\uee$ be the tuple in $\{ 1,* \}^k$ 
which is obtained by swapping the positions $j$ and $j+1$ of $\ee_o$:
\[
\ee (j) = 1, \ \ee (j+1) = *, \mbox{ and }
\ee (p) = \ee_o (p) \mbox{ for all }
p \in \{1, \ldots ,k \} \setminus \{ j, j+1 \}.
\]
We notice that:
\begin{align*}
a^{\ee_o} 
& = (a^{\ee (1)} \cdots a^{\ee (j-1)} ) (a^{*}a )
    (a^{\ee (j+2)} \cdots a^{\ee (k)} )              \\
& = (a^{\ee (1)} \cdots a^{\ee (j-1)} ) 
    (a a^{*} + (\wx_{(*,1)} - \wx_{(1,*)} ) \oneA )
    (a^{\ee (j+2)} \cdots a^{\ee (k)} )               \\
& = a^{\ee} + (\wx_{(*,1)} - \wx_{(1,*)} ) a^{\ee '},
\end{align*}
where $\ee ' := 
( \ee_o (1), \ldots , \ee_o (j-1), \ee_o (j+2), \ldots , \ee_o (k) )
\in \{ 1,* \}^{k-2}$ is obtained by removing the components 
$j$ and $j+1$ out of $\ee_o$, and where $a^{\ee}, a^{\ee '}$ are 
defined in the same way as $a^{\ee_o}$ was defined in 
(\ref{eqn:83b}).  We hence conclude that:
\begin{equation}     \label{eqn:83c}
\varphi ( a^{\ee_o} ) = 
\varphi ( a^{\ee} ) 
+ (\wx_{(*,1)} - \wx_{(1,*)} ) \varphi ( a^{\ee '} ).
\end{equation}

Now, it is immediately verified that 
$\mathrm{inv} ( \ee ) = \mathrm{inv} ( \ee_o ) - 1 = \ell -1$, and also
that $\mathrm{inv} ( \ee ' ) < \mathrm{inv} ( \ee_o ) = \ell$; hence
the induction hypothesis applies to both $\ee$ and $\ee '$, and assures
us that we have $\varphi ( a^{\ee} ) = \Sigma ( \ee )$ and
$\varphi ( a^{\ee '} ) = \Sigma ( \ee ' )$, where we denote:
\[
\Sigma ( \ee )  := \sum_{\rho \in \cP_2 (k)}
\prod_{ \substack{ \{ p,q \} \in \rho,  \\  \mathrm{with} \ p<q} }
\wx_{( \ee (p), \ee (q))},
\ \ \Sigma ( \ee' )  := \sum_{\rho ' \in \cP_2 (k-2)}
\prod_{ \substack{ \{ p,q \} \in \rho ',  \\  \mathrm{with} \ p<q} }
\wx_{( \ee' (p), \ee' (q))}.
\]

Recall that what we need to prove is the equality
$\varphi ( a^{\ee_o} ) = \Sigma ( \ee_o )$, with
\[
\Sigma ( \ee_o )  := \sum_{\rho \in \cP_2 (k)}
\prod_{ \substack{ \{ p,q \} \in \rho,  \\  \mathrm{with} \ p<q} }
\wx_{( \ee_o (p), \ee_o (q))}.
\]
In view of (\ref{eqn:83c}), the desired equality 
$\varphi ( a^{\ee_o} ) = \Sigma ( \ee_o )$ will follow if we can verify
that
\begin{equation}     \label{eqn:83d}
\Sigma ( \ee_o ) = 
\Sigma ( \ee ) + (\wx_{(*,1)} - \wx_{(1,*)} ) \Sigma ( \ee ' ).
\end{equation}

So we are left to establish that (\ref{eqn:83d}) holds.  To that end,
let us also consider the sum
\begin{equation}    \label{eqn:83e}
\widetilde{\Sigma} =  \sum_{ \substack{ \rho \in \cP_2 (k) ,  \\
\{ j, j+1 \} \ \mathrm{not \ a \ pair \ in} \ \rho } }
\ \ \prod_{ \substack{ \{ p,q \} \in \rho,  \\  \mathrm{with} \ p<q} }
\wx_{( \ee_o (p), \ee_o (q))}.
\end{equation}
We make the observation that:
\begin{equation}   \label{eqn:83f}
\Sigma ( \ee_o ) - \widetilde{\Sigma} 
= \wx_{(*,1)} \cdot \Sigma ( \ee ' ) \ \mbox{ and }
\  \Sigma ( \ee ) - \widetilde{\Sigma} 
= \wx_{(1,*)} \cdot \Sigma ( \ee ' ).
\end{equation}
Indeed, for the first Equation (\ref{eqn:83f}) we write the 
difference $\Sigma ( \ee_o ) - \widetilde{\Sigma}$ as
\begin{align*}
\sum_{ \substack{ \rho \in \cP_2 (k) ,  \\ \{ j, j+1 \} \in \rho } }
\ \ \prod_{ \substack{ \{ p,q \} \in \rho,  \\  \mathrm{with} \ p<q} }
\wx_{( \ee_o (p), \ee_o (q))}
& = \wx_{( \ee_o (j), \ee_o (j+1) )} \cdot
\sum_{\rho ' \in \cP_2 (k-2)}
\prod_{ \substack{ \{ p,q \} \in \rho ',  \\  \mathrm{with} \ p<q} }
\wx_{( \ee' (p), \ee' (q))}    \\
& = \wx_{(*,1)} \cdot \Sigma ( \ee' ),
\end{align*}
where at the first equality sign we let the running pair-partition
$\rho'$ be obtained out of $\rho$ by removing its pair
$\{ j,j+1 \}$ and then redenoting the elements of 
$\{ 1, \ldots ,k \} \setminus \{ j,j+1 \}$ as 
$1, \ldots , k-2$.  The second Equation (\ref{eqn:83f}) is obtained
in a similar manner, where now we view $\ee'$ as the restriction of 
$\ee$, and the factor $\wx_{(\ee (j), \ee (j+1))}$ that has to be 
treated separately is equal to $\wx_{(1,*)}$.

Finally, subtracting the second Equation (\ref{eqn:83f}) out 
of the first leads precisely to the formula (\ref{eqn:83d})
that we had been left to prove.
\end{proof}

\vspace{6pt}

Now we put together the Wick formulas described above, 
to get a formula for the 
joint moments of the entries of the CCR-GUE matrix.

\vspace{6pt}

\begin{proposition}   \label{prop:84}
Let $M = [ a_{i,j} ]_{i,j=1}^d \in M_d ( \cA )$ be the
CCR-GUE matrix introduced in Definition \ref{def:28}.
Then for every $k \in \bN$ and 
$\ui, \uj : \{ 1, \ldots , k \} \to \{ 1, \ldots , d \}$ 
we have
\begin{equation}   \label{eqn:84a}
\varphi \bigl(  a_{\ui (1), \uj (1)} 
\cdots a_{\ui (k), \uj (k)} \bigr)
\end{equation}
\[
=  \sum_{\rho \in \cP_2^{\mathrm{(b-r)}} (k)}
\Bigl( \, \prod_{ \substack{ \{ p,q \} \in \rho, \\ 
                  \mathrm{blue,} \ p<q } }
\delta_{\ui (p), \uj (q)} \delta_{\ui (q), \uj (p)} w_{\ui (q)}
\cdot
\prod_{ \substack{ \{ p,q \} \in \rho, \\ \mathrm{red,} \ p<q } }
\delta_{\ui (p), \uj (p)} \delta_{\ui (q), \uj (q)} 
\cdot (- w_{\ui (p)} w_{\ui (q)} ) \, \Bigr) ,
\]
where $\cP_2^{\mathrm{(b-r)}} (k)$ is the set of bicoloured 
pair-partitions from Notation \ref{def:75}.
\end{proposition}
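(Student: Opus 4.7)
\textbf{Proof plan for Proposition \ref{prop:84}.} The strategy is to use the commuting independence of the subalgebras in (\ref{eqn:81a}) to reduce the left-hand side of (\ref{eqn:84a}) to a product of Wick-style moments, one within each subalgebra, and then reorganise the resulting nested sums into a single sum over bicoloured pair-partitions.

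First, I would partition the positions $\{1,\ldots,k\}$ according to which subalgebra each entry $a_{\ui(h),\uj(h)}$ belongs to: a position $h$ lies in the $\cA_o$-block when $\ui(h)=\uj(h)$, and in the $\cA_{i,j}$-block (for the unique $i<j$ with $\{i,j\}=\{\ui(h),\uj(h)\}$) otherwise. Since elements of distinct subalgebras commute, one may reorder the product $a_{\ui(1),\uj(1)}\cdots a_{\ui(k),\uj(k)}$ so as to collect the factors subalgebra-by-subalgebra, preserving their relative order inside each block; this rearrangement is an equality in $\cA$, not just under $\varphi$. Then property (ii) of commuting independence (Definition \ref{def:82}.1) factors $\varphi$ as the product, over the non-empty blocks, of $\varphi$ applied to the partial product in each block.

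Next, apply the appropriate Wick formula within each block. For an off-diagonal block in $\cA_{i,j}$, the partial product is a word in $a_{i,j}$ and $a_{j,i}=a_{i,j}^{*}$; Proposition \ref{prop:83}, used with $\wx_{(1,*)}=w_j$ and $\wx_{(*,1)}=w_i$, rewrites its $\varphi$-value as a sum over pair-partitions of the block, with factor $\wx_{(\ee(p),\ee(q))}$ per pair (where $\ee(h)=1$ encodes $a_{i,j}$ and $\ee(h)=*$ encodes $a_{j,i}$). A short case-check over the four possibilities for $(\ee(p),\ee(q))$ shows the clean rewriting
\[
\wx_{(\ee(p),\ee(q))} \;=\; w_{\ui(q)}\,\delta_{\ui(p),\uj(q)}\,\delta_{\ui(q),\uj(p)},
\]
which is precisely the ``blue'' factor in (\ref{eqn:84a}); in particular, the vanishing $\wx_{(1,1)}=\wx_{(*,*)}=0$ matches $\delta_{\ui(p),\uj(q)}\delta_{\ui(q),\uj(p)}=0$ in those cases. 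For the $\cA_o$-block, the standard Wick formula (Definition \ref{def:82}.2) yields a sum over pair-partitions with factor $c_{\ui(p),\ui(q)}$ per pair, and the explicit form (\ref{eqn:28b}) gives the two-term decomposition $c_{\ui(p),\ui(q)} = w_{\ui(p)}\,\delta_{\ui(p),\ui(q)} - w_{\ui(p)}w_{\ui(q)}$; for diagonal positions (where $\ui(h)=\uj(h)$), this equals the sum of a ``blue'' term $w_{\ui(q)}\,\delta_{\ui(p),\uj(q)}\,\delta_{\ui(q),\uj(p)}$ and a ``red'' term $-w_{\ui(p)}w_{\ui(q)}\,\delta_{\ui(p),\uj(p)}\,\delta_{\ui(q),\uj(q)}$.

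Finally, I would combine the block-level sums. They merge into a single sum over pair-partitions $\rho$ of $\{1,\ldots,k\}$ whose pairs respect the block structure, and distributing the two-term decomposition of each diagonal-pair contribution upgrades $\rho$ to a bicoloured pair-partition, with off-diagonal pairs being blue by default. To reach the unrestricted sum over $\cP_2^{\mathrm{(b-r)}}(k)$ displayed in (\ref{eqn:84a}), one verifies that the Kronecker deltas in (\ref{eqn:84a}) automatically kill every cross-block pair: any ``red'' pair involving an off-diagonal position $h$ carries $\delta_{\ui(h),\uj(h)}=0$, while a ``blue'' pair $\{p,q\}$ lying across two different subalgebras is annihilated because $\delta_{\ui(p),\uj(q)}\delta_{\ui(q),\uj(p)}=1$ together with the non-trivial structure of $(\ui(p),\uj(p))$ and $(\ui(q),\uj(q))$ forces $\{\ui(p),\uj(p)\}=\{\ui(q),\uj(q)\}$ (and even the correct opposite orientation). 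The main obstacle in the proof is exactly this final bookkeeping step: making sure the Kronecker deltas in (\ref{eqn:84a}) simultaneously encode the within-block constraint and the ``opposite orientation'' requirement of the CCR Wick pairing, so that the restricted Wick sum and the unrestricted bicoloured sum really do agree.
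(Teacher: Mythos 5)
Your proposal is correct and follows essentially the same route as the paper's proof: partition $\{1,\ldots,k\}$ by subalgebra (the paper's $P_o$ and $P_{u,v}$), factor the left-hand side by commuting independence, apply the classical and CCR Wick formulas block-by-block, and reassemble into the unrestricted sum over bicoloured pair-partitions using the Kronecker deltas to annihilate cross-block and forbidden-colour pairs. The only stylistic difference is that you go ``LHS $\to$ expand $\to$ reorganise $\to$ RHS'' and make the per-pair identities $\wx_{(\ee(p),\ee(q))} = w_{\ui(q)}\delta_{\ui(p),\uj(q)}\delta_{\ui(q),\uj(p)}$ and $c_{\ui(p),\ui(q)} = w_{\ui(p)}\delta_{\ui(p),\ui(q)} - w_{\ui(p)}w_{\ui(q)}$ fully explicit, whereas the paper factors both sides independently (introducing a notion of $\rho$ being ``compatible with $\ui,\uj$'') and then compares factor-by-factor; both amount to the same computation.
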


\begin{proof} Throughout the whole proof we fix a $k \in \bN$ and two
tuples $\ui, \uj : \{ 1, \ldots , k \} \to \{ 1, \ldots , d \}$, for 
which we will verify that (\ref{eqn:84a}) holds.  It will come in 
handy to use the following notation:
\begin{equation}    \label{eqn:84c}
\left\{   \begin{array}{ll}
\bullet & \mbox{Let $P_o := \{ 1 \leq p \leq k \mid \ui (p) = \uj (p) \}$. } \\
        &                                                                    \\
\bullet & \mbox{For every $1 \leq u < v \leq d$, let}   \\
        & \mbox{ $\ $ $P_{u,v} := \{ 1 \leq p \leq k \mid 
         ( \ui (p) , \uj (p) ) = (u,v)$ or $( \ui (p) , \uj (p) ) = (v,u) \}$.}
\end{array}  \right.
\end{equation}
It is immediate that the sets introduced in (\ref{eqn:84c}) are pairwise 
disjoint (with the possibility that some of them are empty), and their 
union is equal to $\{ 1, \ldots , k \}$.  We will run the calculations 
below by making the assumption that all these sets have even 
cardinality.  We leave it as an exercise to the reader 
to verify that if this was not the case (i.e.~either $|P_o|$ or one of
the $|P_{u,v}|$'s was odd), then calculations similar to those shown below 
would quickly end in the conclusion that both sides of (\ref{eqn:84a}) are 
equal to $0$.  Also: we will write the remaining part of this proof by 
assuming we have $P_o \neq \emptyset$.  For the case when $P_o = \emptyset$, 
one simply has to ignore all the considerations pertaining to $P_o$ 
(e.g.~the discussion about the quantities $C_o$ and $C_o'$) which appear
below.

\vspace{6pt}

On the left-hand side of (\ref{eqn:84a}), the hypothesis (i) of commuting 
independence entails a factorization
\begin{equation}    \label{eqn:84d}
\varphi ( a_{\ui (1), \uj (1)} \cdots a_{\ui (k), \uj (k)} )
= C_o \cdot \prod_{ \substack{ 1 \leq u < v \leq d \\
               \mathrm{with} \ P_{u,v} \neq \emptyset } } C_{u,v},
\end{equation}
where the quantities $C_o$ and $C_{u,v}$ are given by
\begin{equation}    \label{eqn:84e}
C_o = \varphi \bigl( \, \prod_{p \in P_o} a_{\ui (p), \ui (p)} \bigr)
\ \mbox{ and }
\ C_{u,v} = \varphi \bigl( \prod_{p \in P_{u,v}} a_{\ui (p), \uj (p)} \bigr).
\end{equation}
We note that $\prod_{p \in P_o} a_{\ui (p), \ui (p)}$ is a commuting
product and that $C_o$ can be evaluated by using the Wick formula 
reviewed in Definition \ref{def:82}.2.  A product 
$\prod_{p \in P_{u,v}} a_{\ui (p), \uj (p)}$ is non-commuting, hence
it is important to mention that its factors are written in the increasing
order of the elements of $P_{u,v}$; each of these factors is either an 
$a_{u,v}$ or an $a_{u,v}^{*}$, and $C_{u,v}$ can thus be evaluated by 
using the CCR-Wick formula from Proposition \ref{prop:83}.

\vspace{6pt}

We now turn to the right-hand side of (\ref{eqn:84a}), which we re-write
more concisely by making the following ad-hoc definition: we say that 
$\rho \in \cP_2^{\mathrm{(b-r)}} (k)$ is 
{\em compatible with $\ui$ and $\uj$} when (j) + (jj) below are holding:
\begin{equation}    \label{eqn:84f}
\left\{   \begin{array}{l}
\mbox{(j) For every blue pair $\{ p,q \}$ of $\rho$ 
      one has $\ui (p) = \uj (q)$ and $\ui (q) = \uj (p)$;}    \\
                    \\
\mbox{(jj) For every red pair $\{ p,q \}$ of $\rho$ 
      one has $\ui (p) = \ui (q)$ and $\uj (p) = \uj (q)$.}    \\
\end{array}  \right.
\end{equation}
The conditions (j) + (jj) from (\ref{eqn:84f}) are precisely addressing
the Kronecker deltas on the right-hand side of (\ref{eqn:84a}), and the
expression written there takes the form:
\begin{equation}    \label{eqn:84g}
\sum_{\substack{ \rho \in \cP_2^{\mathrm{(b-r)}} (k) \\
                 \mathrm{compatible \ with} \ \ui, \uj } }
\Bigl( \, \prod_{ \substack{ \{ p,q \} \in \rho, \\ 
                  \mathrm{blue,} \ p<q } } w_{\ui (q)} \cdot
\prod_{ \substack{ \{ p,q \} \in \rho, \\ \mathrm{red,} \ p<q } }
(- w_{\ui (p)} w_{\ui (q)} ) \, \Bigr) .
\end{equation}

We next observe (via direct comparison between (\ref{eqn:84c}) and
(\ref{eqn:84f})) that if $\rho \in \cP_2^{\mathrm{(b-r)}} (k)$
is compatible with $\ui$ and $\uj$, then every pair $\{ p,q \}$ of 
$\rho$ is contained either in $P_o$ or in a $P_{u,v}$,
with the colour of $\{ p,q \}$ being governed by the following rules:
\begin{equation}   \label{eqn:84h}
\left\{  \begin{array}{l}
\mbox{If $\{p,q\} \subseteq P_{u,v}$ for some $1 \leq u < v \leq d$,
                 then $\{p,q \}$ has colour blue.}    \\
\mbox{If $\{p,q\} \subseteq P_o$ and $\ui (p) \neq \ui (q)$,
                 then $\{p,q\}$ has colour red.}       \\
\mbox{If $\{p,q\} \subseteq P_o$ and $\ui (p) = \ui (q)$,
                 then $\{p,q\}$ may be either blue or red.}       \\
\end{array}   \right.  
\end{equation}
Thus every $\rho \in \cP_2^{\mathrm{(b-r)}} (k)$
compatible with $\ui$ and $\uj$ gets to be identified to a family
\begin{equation}    \label{eqn:84i}
\{ \rho^{(o)} \} \cup \{ \rho^{(u,v)} \mid 
1 \leq u<v \leq d \mbox{ with } P_{u,v} \neq \emptyset \}
\end{equation}
where $\rho^{(o)}$ is a bicoloured pair-partition of $P_o$, every 
$\rho^{(u,v)}$ is a pair-partition of $P_{u,v}$, and each of these
partitions must satisfy some compatibility conditions with $\ui$ and 
$\uj$ (inherited from the compatibility conditions satisfied by
$\rho$).  It is important to note that the compatibility conditions
which have to be satisfied by $\rho^{(o)}$ and the ones that have 
to be satisfied by the various $\rho^{(u,v)}$'s do not interfere with 
each other.  So what we get here is a bijection
\begin{equation}   \label{eqn:84j}
\left(  \begin{array}{c}
\rho \in \cP_2^{\mathrm{(b-r)}} (k) \\
\mbox{compatible with $\ui$ and $\uj$}
\end{array}  \right) \ \longleftrightarrow
\ \cP^{(o)} \times \prod_{ \substack{ 1 \leq u<v \leq d \\
                \mathrm{with} \ P_{u,v} \neq \emptyset } } \cP^{(u,v)},
\end{equation}
where $\cP^{(o)}$ is a certain set of bicoloured pair-partitions of 
$P_o$, and every $\cP^{(u,v)}$ is a certain set
\footnote{ Trying to give a detailed description of what are $\cP^{(o)}$ 
and $\cP^{(u,v)}$ would make the notation become quite heavy.  
We believe it is less painful and nevertheless quite convincing to see
how the bijection (\ref{eqn:84j}) and the factorization (\ref{eqn:84k})
work in a relevant concrete example, such as the one presented immediately 
following to the present proof.}
of pair-partitions of $P_{u,v}$.

Now, the bijection observed in (\ref{eqn:84j}) can be used as 
a change of variable in the indexing set of the summation 
(\ref{eqn:84g}), and this leads to a factorization:
\begin{equation}    \label{eqn:84k}
\sum_{\substack{ \rho \in \cP_2^{\mathrm{(b-r)}} (k) \\
                 \mathrm{compatible \ with} \ \ui, \uj } }
\Bigl( \, \prod_{ \substack{ \{ p,q \} \in \rho, \\ 
                  \mathrm{blue,} \ p<q } } w_{\ui (q)} \cdot
\prod_{ \substack{ \{ p,q \} \in \rho, \\ \mathrm{red,} \ p<q } }
(- w_{\ui (p)} w_{\ui (q)} ) \, \Bigr) 
= C_o' \cdot \prod_{ \substack{ 1 \leq u < v \leq d \\
            \mathrm{with} \, P_{u,v} \neq \emptyset } } C_{u,v}',
\end{equation}
where $C_o'$ is expressed as a summation over $\cP^{(o)}$ and every
$C_{u,v} '$ is expressed as a summation over $\cP^{(u,v)}$.  

The last thing which needs to be observed is that the summation
$\sum_{\cP^{(o)}}$ which gives $C_o'$ is nothing but the Wick 
expansion for the expectation 
$\varphi ( \prod_{p \in P_o} a_{\ui (p), \ui (p)} )$ that had 
appeared in (\ref{eqn:84e}); consequently, we have $C_o' = C_o$.
Likewise, for $1 \leq u < v \leq d$ with $P_{u,v} \neq \emptyset$, 
the summation $\sum_{\cP^{(u,v)}}$ which gives $C_{u,v}'$ is 
the CCR-Wick expansion for the expectation 
$\varphi ( \prod_{p \in P_{u,v}} a_{\ui (p), \uj (p)} )$ 
in (\ref{eqn:84e}); consequently, we have $C_{u,v}' = C_{u,v}$.  
We conclude that the two sides of Equation (\ref{eqn:84a}) are indeed
equal to each other, since they were expessed as
$C_o \cdot \prod_{u,v} C_{u,v}$ and
$C_o' \cdot \prod_{u,v} C_{u,v}'$, respectively.
\end{proof}

\vspace{6pt}

\begin{example}    \label{example:85}
In order to have a better grasp of what the proof of 
Proposition \ref{prop:84} is doing, it is useful to run the 
steps of the proof in the setting of a concrete example.  
Assume for instance that $k=10$ and we are looking at
\begin{equation}    \label{eqn:85a}
\varphi \bigl(
  {\color{red} a_{1,1}} \, {\color{blue} a_{1,2}} \, {\color{red} a_{1,1}}
    \, {\color{red} a_{3,3}} \, {\color{blue} a_{2,1}} 
    \, {\color{blue} a_{3,1} \, a_{1,2} \, a_{1,3}} \, {\color{red} a_{2,2}} 
    \, {\color{blue} a_{2,1}} \bigr).
\end{equation}
The tuples $\ui, \uj \in \{ 1, \ldots , d \}^{10}$ used in
this example are thus
\begin{equation}   \label{eqn:85b}
\begin{array}{ll}
             &  \ui = (1,1,1,3,2,3,1,1,2,2)  \\
\mbox{ and}  &  \uj = (1,2,1,3,1,1,2,3,2,1). 
\end{array}
\end{equation}
The set of positions in $\{ 1, \ldots , 10 \}$ where we see elements
$a_{i,i}$ is $P_o = \{ {\color{red} 1,3,4,9} \}$. We observe that we also have
$P_{1,2} = \{ {\color{blue} 2,5,7,10} \}$ 
(positions where we see an $a_{1,2}$ or an $a_{2,1}$), and
$P_{1,3} = \{ {\color{blue} 6,8} \}$ 
(positions where we see an $a_{1,3}$ or an $a_{3,1}$). 
The other sets $P_{u,v}$ defined in (\ref{eqn:84c}) of the proof of 
Proposition \ref{prop:84} (e.g.~$P_{2,3}$, or $P_{1,v}$ with 
$4 \leq v \leq d$) are empty and will simply not appear in the 
further discussion of the example.

Now let us start to follow the steps of the proof of 
Proposition \ref{prop:84}.
Commuting independence allows us to re-arrange the product of 
$a_{i,j}$'s from (\ref{eqn:85a}) as
\[
( a_{1,1} \, a_{1,1} \, a_{3,3} \, a_{2,2} ) \times
( a_{1,2} \, a_{2,1} \, a_{1,2} \, a_{2,1} ) \times
( a_{3,1} \, a_{1,3} )
\]
and to factor its expectation as $C_o \cdot C_{1,2} \cdot C_{1,3}$
where
\[
C_o = \varphi ( a_{1,1} \, a_{1,1} \, a_{3,3} \, a_{2,2} ),
\ C_{1,2} = \varphi ( a_{1,2} \, a_{2,1} \, a_{1,2} \, a_{2,1} ),
\ C_{1,3} = \varphi ( a_{3,1} \, a_{1,3} ).
\]
We note that $C_o$ can be evaluated by using the Wick formula 
reviewed in Definition \ref{def:82}.2:
\begin{align*}
C_o 
& = c_{1,1} c_{3,2} + c_{1,3} c_{1,2} + c_{1,2} c_{1,3} \\
& = ( w_1 - w_1^2 ) ( - w_2 w_3) + ( - w_1 w_3) ( - w_1 w_2 )
    + (- w_1 w_2) ( - w_1 w_3) \\
& = (3 w_1^2 - w_1) w_2 w_3,
\end{align*}
where the entries $c_{i,j}$ of the covariance matrix $C$ are as 
stated in Proposition \ref{prop:84}.  For $C_{1,2}$ and $C_{1,3}$
we use the CCR-Wick formula discussed in Proposition \ref{prop:83}, 
which gives
\[
C_{1,2} = w_2^2 + w_1 w_2 \ \mbox{ and }
\ C_{1,3} = w_1.
\]
The two terms in the formula for $C_{1,2}$ correspond to 
the pairings $\bigl\{ \{2,5\}, \, \{7,10\} \bigr\}$ and
$\bigl\{ \{2,10\}, \, \{5,7\} \bigr\}$ in 
$\cP_2 ( P_{1,2} )$, while the formula for $C_{1,3}$ uses 
the unique pairing in $\cP_2 ( P_{1,3} )$.

\vspace{6pt}

Let us now check what pairings $\rho \in \cP_2^{\mathrm{(b-r)}} (10)$
are compatible with the tuples $\ui$ and $\uj$ indicated in 
(\ref{eqn:85b}).  It is clear that the $a_{3,1}$ on position $6$ must
belong to a blue pair of $\rho$, and this pair can only be $\{6,8\}$. 
Likewise, the occurrences of $a_{1,2}$ on positions $2$ and $7$ have 
to be paired (in some order) to the occurrences of $a_{2,1}$ on positions 
$5$ and $10$, which will make for two more blue pairs of $\rho$.  Finally, 
the $a_{i,i}$'s on positions $1,3,4,9$ remain to be paired among 
themselves; this will make for some red pairs of $\rho$, with the exception
of the fact that if we pair together the two occurrences of $a_{1,1}$, 
that pair could be either red or blue.  This discussion leads precisely 
to the bijection indicated in (\ref{eqn:84j}) of Proposition \ref{prop:84}.
On the right-hand side of (\ref{eqn:84j}) we have, for this example, a
Cartesian product $\cP^{(o)} \times \cP^{(1,2)} \times \cP^{(1,3)}$ where:
\[
\left\{   \begin{array}{ll}
\bullet  &
\cP^{(o)} \mbox{ consists of the pair-partitions }
\bigl\{  {\color{blue} \{1,3\}}, {\color{red} \{4,9\}}  \bigr\},
\bigl\{  {\color{red} \{1,3\}, \{4,9\}}  \bigr\},                   \\
         &
\bigl\{  {\color{red} \{1,4\}, \{3,9\}}  \bigr\},          
\bigl\{  {\color{red} \{1,9\}, \{3,4\}}  \bigr\}
\mbox{ of } P_o.                                             \\
         &                                                   \\
\bullet  &
\cP^{(1,2)} \mbox{ consists of the pair-partitions }
\bigl\{  {\color{blue} \{2,5\}, \{7,10\}}   \bigr\},
\bigl\{  {\color{blue} \{2,10\}, \{5,7\}}   \bigr\}
\mbox{ of } P_{1,2}.                                          \\
         &                                                    \\
\bullet  &
\cP^{(1,3)} \mbox{ consists of the pair-partition }
\bigl\{ \, {\color{blue} \{6,8\}} \, \bigr\},
\mbox{ of } P_{1,3}.    
\end{array}  \right.
\]
We leave it to the reader to use the concrete descriptions of 
$\cP^{(o)}, \cP^{(1,2)}$ and $\cP^{(1,3)}$ in order to write down
explicitly what we get in the factorization (\ref{eqn:84k}). 
This factorization will produce some constants 
$C_o', C_{1,2}', C_{1,3}'$, which are found (another exercise for 
the reader) to coincide precisely with the $C_o, C_{1,2}$ and 
$C_{1,3}$ that were computed above.
\end{example}

\vspace{6pt}

\begin{ad-hoc-item}    \label{proof:86}
{\bf Proof of Theorem \ref{thm:29}.}
Since we saw in Remark \ref{rem:410} that the law $\muw$ is uniquely
determined by its moments, it will suffice to pick a $k \in \bN$ and 
verify that the moment of order $k$ of the law of $M \in M_d ( \cA )$ 
is equal to the moment of order $k$ of $\muw$.  The said moment of the 
law of $M$ is
\[
\varphi_{\uw} (M^k)
= \sum_{i=1}^d w_i 
    \, \varphi \bigl( \mbox{$(i,i)$-entry of $M^k$} \bigr)
= \sum_{i=1}^d w_i \, \varphi ( \sum_{i_2, \ldots , i_k=1}^d
a_{i,i_2} a_{i_2,i_3} \cdots a_{i_{k-1},i_k} a_{i_k,i} )  
\]
\begin{equation}    \label{eqn:86a}
= \sum_{\ui : \{ 1, \ldots ,k \} \to \{ 1, \ldots , d\} } 
w_{\ui (1)} \, \varphi \bigl( a_{\ui (1), \ui (2)}, \ldots , 
            a_{\ui (k-1), \ui (k)} a_{\ui (k), \ui (1)} \bigr).
\end{equation}

\vspace{3pt}

Now pick a tuple $\ui : \{ 1, \ldots , k \} \to \{ 1, \ldots , d \}$
and observe that if we put $\uj := \ui \circ \cyc_k$ (that is, we put 
$\uj (1) = \ui (2), \ldots , \uj (k-1) = \ui (k), \uj (k) = \ui (1)$), 
then the Wick formula (\ref{eqn:84a}) used for these $\ui$ 
and $\uj$ gives:
\begin{equation}   \label{eqn:86b}
\varphi \bigl( a_{\ui (1), \ui (2)}, \ldots , 
a_{\ui (k-1), \ui (k)} a_{\ui (k), \ui (1)} \bigr)
\end{equation}
\[
=  \sum_{\rho \in \cP_2^{\mathrm{(b-r)}} (k)}
\Bigl( \, \prod_{ \substack{ \{ p,q \} \in \rho, \\ 
                  \mathrm{blue,} \ p<q } }
\delta_{\ui (p), (\ui \circ c_k) (q)} 
\delta_{\ui (q), (\ui \circ c_k) (p)} w_{\ui (q)} \cdot
\prod_{ \substack{ \{ p,q \} \in \rho, \\ \mathrm{red,} \ p<q } }
\delta_{\ui (p), (\ui \circ c_k) (p)} 
\delta_{\ui (q), (\ui \circ c_k) (q)} 
\cdot (- w_{\ui (p)} w_{\ui (q)} ) \, \Bigr) .
\]
But for any $\rho \in \cP_2^{\mathrm{(b-r)}} (k)$, the 
definition of the permutation $\sigma_{\rho}^{\mathrm{blue}}$
can be used to evaluate the product of Kronecker $\delta$'s 
on the right-hand side of (\ref{eqn:86b}), as follows:
\[
\prod_{ \substack{ \{ p,q \} \in \rho, \\ 
                  \mathrm{blue,} \ p<q } }
\delta_{\ui (p), (\ui \circ c_k) (q)} 
\delta_{\ui (q), (\ui \circ c_k) (p)} \cdot
\prod_{ \substack{ \{ p,q \} \in \rho, \\ \mathrm{red,} \ p<q } }
\delta_{\ui (p), (\ui \circ c_k) (p)} 
\delta_{\ui (q), (\ui \circ c_k) (q)}
\]
\[
=  \prod_{ \substack{ \{ p,q \} \in \rho, \\ 
                  \mathrm{blue,} \ p<q } }
\delta_{(\ui \circ \sigma_{\rho}^{\mathrm{blue}}) (q), (\ui \circ c_k) (q)} 
\delta_{(\ui \circ \sigma_{\rho}^{\mathrm{blue}}) (p), (\ui \circ c_k) (p)} \cdot
\prod_{ \substack{ \{ p,q \} \in \rho, \\ \mathrm{red,} \ p<q } }
\delta_{(\ui \circ \sigma_{\rho}^{\mathrm{blue}}) (p), (\ui \circ c_k) (p)} 
\delta_{(\ui \circ \sigma_{\rho}^{\mathrm{blue}}) (q), (\ui \circ c_k) (q)}
\]
\[
= \prod_{h=1}^k 
\delta_{(\ui \circ \sigma_{\rho}^{\mathrm{blue}}) (h), (\ui \circ c_k) (h)} 
= \left\{ \begin{array}{ll}
1,  &  \mbox{ if 
        $\ui \circ \sigma_{\rho}^{\mathrm{blue}} = \ui \circ \cyc_k$},     \\
0,  &   \mbox{ otherwise.}
\end{array}   \right.
\]

The conclusion of the preceding paragraph is that, for every 
$\ui : \{ 1, \ldots , k \} \to \{ 1, \ldots , d \}$, Equation
(\ref{eqn:86b}) evaluates the expectation
$\varphi \bigl( a_{\ui (1), \ui (2)}, \ldots , 
a_{\ui (k-1), \ui (k)} a_{\ui (k), \ui (1)} \bigr)$ to
\begin{equation}   \label{eqn:86c}
\sum_{ \substack{ \rho \in \cP_2^{\mathrm{(b-r)}} (k)  \\
                     \mathrm{with} \ \ui \circ c_k = 
                             \ui \circ \sigma_{\rho}^{\mathrm{blue}} } }
\Bigl( \, \prod_{ \substack{ \{ p,q \} \in \rho, \\ 
                  \mathrm{blue,} \ p<q } }  w_{\ui (q)} \cdot
\prod_{ \substack{ \{ p,q \} \in \rho, \\ \mathrm{red,} \ p<q } }
(- w_{\ui (p)} w_{\ui (q)} ) \, \Bigr) .
\end{equation}
When substituting this into (\ref{eqn:86a}), we find that 
$\varphi_{\uw} (M^k)$ is precisely given by the formula we had 
found in Proposition \ref{prop:76} for the moment of order $k$ of $\muw$.
\hfill  $\square$
\end{ad-hoc-item}

$\ $

\end{document}